\documentclass[a4paper 12pt]{amsart}
\usepackage[colorlinks]{hyperref}
\usepackage{xcolor}
\usepackage{mathscinet}
\usepackage{latexsym}
\usepackage{amsthm}
\usepackage{amssymb}
\usepackage{amsfonts}
\usepackage{amsmath}
\newtheorem{theorem}{Theorem}[section]
\newtheorem{thm}[theorem]{Theorem}

\newtheorem{lem}[theorem]{Lemma}

\newtheorem{prop}[theorem]{Proposition}

\newtheorem{assumption}[theorem]{Assumption}
\theoremstyle{definition}

\newtheorem{defn}[theorem]{Definition}
\theoremstyle{remark}

\newtheorem{rem}[theorem]{Remark}
\numberwithin{equation}{section}

 \DeclareMathAlphabet{\mathpzc}{OT1}{pzc}{m}{it}
 \newcommand{\To}{\longrightarrow}

 \newcommand{\proj}{\mathcal{P}}            
 \newcommand{\E}{\mathbb{E}}            
 \newcommand{\T}{\mathbb{T}}
 \newcommand{\e}{\varepsilon}
 \newcommand{\p}{\partial}
 \newcommand{\ek}{\mathcal{E}_K}
 \newcommand{\ekm}{\mathcal{E}_{m,K}}
 
 \newcommand{\D}{\mathcal{D}}
 \newcommand{\Ll}{\langle}
 \newcommand{\Rr}{\rangle}
 \newcommand{\ph}{\mathpzc{h}}
 \newcommand{\pl}{\mathpzc{l}}
 \newcommand{\N}{\mathbb{N}}
 \newcommand{\R}{\mathbb{R}}
 \newcommand{\Z}{\mathbb{Z}}
 \newcommand{\PP}{\mathbb{P}}
 \newcommand{\mcl}{\mathcal}

\begin{document}
\title[Exponential mixing of NSE with mildly degenerate noise]
{Exponential mixing of the 3D stochastic Navier-Stokes equations
driven by mildly degenerate noises}
\author[S.Albeverio]{Sergio Albeverio}
\address{Department of Applied Mathematics, Bonn University, Bonn, Germany}
\author[A. Debussche]{Arnaud Debussche}
\address{ENS Cachan, Antenne de Bretagne, Avenue Robert Schumann, 35170 BRUZ, France}
\email{arnaud.debussche@bretagne.ens-cachan.fr}
\author[L. Xu]{Lihu Xu}
\address{PO Box 513, EURANDOM, 5600 MB  Eindhoven. The Netherlands}
\email{xu@eurandom.tue.nl}
\subjclass[2000]{Primary 76D05; Secondary 60H15, 35Q30, 60H30, 76M35}
\keywords{stochastic Navier-Stokes equation (SNS), Kolmogorov equation, Galerkin approximation, strong Feller, ergodicity, mildly degenerate noise, Malliavin calculus}
\date{}
\maketitle
\begin{abstract} \label{abstract}
\noindent We prove the strong Feller property and exponential mixing
for 3D stochastic Navier-Stokes equation driven by mildly degenerate
noises (i.e. all but finitely many Fourier modes are forced) via
Kolmogorov equation approach.
\end{abstract}

 \ \\
\section{Introduction}
The ergodicity of SPDEs driven by
\emph{degenerate} noises have been intensively studied in recent
years (see for instance \cite{EH01},\cite{HM06}, \cite{EM01}, \cite{HM09},
\cite{R08}). For the 2D stochastic Navier-Stokes equations (SNS),
there are several results on ergodicity, among which the most remarkable
one is by Hairer and
Mattingly (\cite{HM06}). They proved that the 2D stochastic dynamics has
a unique invariant measure as long as the noise forces at least two
linearly independent Fourier modes. As for the 3D SNS, most of ergodicity
results are about the dynamics driven by non-degenerate noises (see \cite{DPD03},
\cite{FR08}, \cite{R04}, \cite{R08}, \cite{RZ08}). In the respect of the degenerate noise case,
as noises are essentially elliptic setting of which \emph{all but finite} Fourier modes are driven,
\cite{RX09} obtained the ergodicity by combining Markov selection
and Malliavin calculus. As the noises are
truly hypoelliptic (\cite{HM06}), ergodicity is still open.
\ \\

In this paper, we shall still study the 3D SNS driven by essentially elliptic noises as above, but our approach is essentially different from that in \cite{RX09}.
Rather than Markov selection and cutoff technique, we prove the strong Feller property
by studying some Kolmogorov equations with a large negative potential, which was developed in
\cite{DPD03}.
Comparing with the method in \cite{DPD03} and \cite{DO06},
we cannot apply the Bismut-Elworthy-Li formula (\cite{EL94}) due to the degeneracy of the noises.
To fix this problem, we follow the ideas in \cite{EH01} and split the dynamics into high and low frequency parts, applying
the formula to the dynamics at high modes and Malliavin calculus to those at low ones.
Due to the degeneracy of the noises again, when applying Duhamel formula as in \cite{DPD03} and \cite{DO06},
we shall encounter an obstruction of not integrability (see \eqref{e:WroDuh}). Two techniques are developed in Proposition
\ref{p:LesGam} and \ref{p:HigAx} to conquer this problem, and the underlying idea is to trade off the spatial regularity
for the time integrability. Using the coupling method of \cite{Od07}, in which the noises have to be non-degenerate, we prove the exponential mixing and find that the construction of the coupling can be simplified.
Finally, we remark that the large coefficient $K$ in front of the potential
(see \eqref{e:KEMFK}), besides suppressing the nonlinearity $B(u,u)$ as in \cite{DPD03} and \cite{DO06},
also conquers the crossing derivative flows (see \eqref{e:DhHXL} and \eqref{e:DhLXH}).
\ \\

Let us discuss the further application of the Kolmogorov equation method in \cite{DPD03}, \cite{DO06} and this paper. For another essentially elliptic setting where sufficiently large (but still finite) modes are forced (\cite{HM06}, section 4.5), due to the large negative potential, it is easy to show the \emph{asymptotic strong Feller} (\cite{HM06}) for the semigroup $S^m_t$ (see \eqref{e:StmDef}). There is a hope to transfer this asymptotic strong Feller to the semigroup $P^m_t$ (see \eqref{e:PtmDef}) using the technique in Proposition \ref{p:HigAx}. If $P^m_t$ satisfies asymptotic strong Feller, then we can also prove the ergodicity. This is the further aim of our future research in 3D SNS. \\

The paper is organized as follows. Section \ref{s:PreMai} gives
a detailed description of the problem, the assumptions on the
noise and the main results (Theorems \ref{t:MaiThm1}
and \ref{t:MaiThm2}). Section \ref{s:KeyLem} proves the crucial estimate in Theorem
\ref{thm:StBound}, while the fourth section
\ref{s:MalCal} applies Malliavin calculus
to prove the important Lemma \ref{lem:LowDerivative}.
Section \ref{s:ProMaiThm} gives a sketch proof for the main theorems, and
the last section contains the estimate of Malliavin matrices and the proof of some
technical lemmas. \\

{\bf Acknowledgements}: We would like to thank Prof. Martin Hairer
for pointing out a serious error in the original version and some helpful suggestions on how to correct it.
We also would like to thank Dr. Marco Romito for the
stimulating discussions and some helpful suggestions on correcting
several errors.
\section{Preliminary and main results} \label{s:PreMai}
\subsection{Notations and assumptions}
Let ${\T^3}=[0,2\pi]^3$ be the three-dimensional torus, let
\begin{equation*} \label{e:CalH}
H=\{x \in L^2(\T^3,{\R}^3): \int_{{\T}^3} x(\xi)d\xi=0, \ div x(\xi)=0\},
\end{equation*}
and let
$$\mathcal{P}:L^2(\T^3, \R^3) \rightarrow H$$
be the orthogonal projection operator.
We shall study the equation
\begin{equation} \label{e:NSEabs}
\begin{cases}
dX + [\nu A X + B(X,X)] dt = QdW_t,\\
X(0) =x,
\end{cases}
\end{equation}
where
\begin{itemize}
\item $A = -\mathcal{P} \Delta$ \ \ $D(A)=H^2(\T^3,{\R}^3) \cap H$.
\item The nonlinear term $B$ is defined by
\begin{equation*} \label{e:NonlinearB}
 B(u,v)=\mathcal{P}[(u \cdot\nabla)v], \ B(u)=B(u,u) \ \ \ \forall \ u, v \in H^1(\T^3,{\R}^3) \cap H.
\end{equation*}
\item $W_t$ is the cylindrical Brownian motion on $H$ and $Q$ is the covariance matrix to be defined later.
\item We shall assume the value $\nu=1$ later on, as its
exact value will play no essential role.
\end{itemize}
\  \\

Define ${\Z}^3_{+}=\{k \in {\Z}^3; k_1>0 \} \cup \{k \in {\Z}^3; k_1=0, k_2>0 \} \cup \{k \in {\Z}^3; k_1=0, k_2=0,
k_3>0\},$ ${\Z}^3_{-}=-{\Z}^3_{+}$ and ${\Z}_{*}^3={\Z}^3_{+} \cup {\Z}^3_{-}$, for any $n>0$, denote
\begin{equation*} \label{e:ZlnZhn}
Z_\pl(n)=[-n,n]^3 \setminus (0,0,0), \ \ \ Z_\ph(n)={\Z}_{*}^3 \setminus Z_\pl(n).
\end{equation*}
Let $k^{\bot}=\{\eta \in {\R}^3; k \cdot \eta=0\}$,
define the projection $\mathcal{P}_k: {\R}^3 \rightarrow k^{\bot}$ by
\begin{equation} \label{e:ProK}
\mathcal{P}_k \eta=\eta-\frac{k \cdot \eta}{|k|^2} k \ \ \ \ \ \ \eta \in {\R}^3.
\end{equation}
Let $e_k(\xi)=cos k\xi \ {\rm if} \ k \in {\Z}^3_{+}$, $\ e_k(\xi)=sin k\xi \ {\rm if} \ k \in {\Z}^3_{-}$ and let $\{e_{k,1},e_{k,2}\}$ be an orthonormal basis of $k^{\bot}$, denote
$$e^1_k(\xi)=e_k(\xi) e_{k,1}, \ \ e^2_k(\xi)=e_k(\xi) e_{k,2}  \ \ \ \ \forall \ k \in {\Z}^3_*;$$
$\{e^i_k;k \in {\Z}^3_{*},i=1,2\}$ is a Fourier basis
of $H$ (up to the constant $\sqrt{2}/(2 \pi)^{3/2}$). With this
Fourier basis, we can write the cylindrical Brownian motion $W$ on $H$ by
$$W_t=\sum_{k \in {\Z}_{*}^3} w_k(t)e_k=\sum_{k \in {\Z}_{*}^3} \sum_{i=1}^2 w^i_k(t)e^i_k$$
where each $w_k(t)=(w^1_k(t),w^2_k(t))^T$ is a 2-d standard Brownian motion.
Moreover,
$$B(u,v)=\sum_{k \in {\Z}_{*}^3} B_k(u,v) e_k$$
where $B_k(u,v)$ is the Fourier coefficient of $B(u,v)$ at the mode $k$. Define
$$\tilde B(u,v)=B(u,v)+B(v,u), \ \ \ \ \tilde B_k(u,v)=B_k(u,v)+B_k(v,u).$$
We shall calculate $\tilde B_k(a_j e_j, a_l e_l)$ with $a_j \in j^{\bot}, a_l \in l^{\bot}$ in Appendix \ref{sub:App1}.
\ \\

Furthermore, given any $n>0$, let $\pi_n: H \To H$ be the projection
from $H$ to the subspace $\pi_n H:=\{x \in H: x=\sum_{k \in Z_\pl (n)} x_k e_k\}$.

\begin{assumption}[Assumptions for $Q$] \label{a:Q}

We assume that $Q:H \To H$ is a linear bounded operator such that
\begin{itemize}
\item[(A1)] (Diagonality) There are a sequence of linear maps $\{q_k\}_{k \in {\Z}_{*}^3}$ with $q_k: k^\bot \To k^\bot$ such that
$$Q(y e_k)=(q_k y) e_k \ \ \ \  \ y \in k^\bot.$$
\item[(A2)] (Finitely Degeneracy) There exists some nonempty sublattice $Z_\pl(n_0)$ of ${\Z}_{*}^3$ such that
$$q_k=0 \ \ \ \ \ \ k \in Z_\pl(n_0).$$
\item[(A3)] $(Id-\pi_{n_0}) A^{r} Q$ is \emph{bounded invertible} on $(Id-\pi_{n_0})H$ with $1<r<3/2$ and moreover $Tr[A^{1+\sigma} QQ^{*}]<\infty$ for some $\sigma>0$.
\end{itemize}
\end{assumption}
\begin{rem}  \label{rem:Q-matrix}
Under the Fourier basis of $H$, $Q$ has the following representation
\begin{equation} \label{e:QRep}
Q=\sum_{k \in Z_{\ph}(n_0)} \sum_{i,j=1}^2 q^{ij}_k e^i_k \otimes e^j_k
\end{equation}
where $x \otimes y: H \To H$ is defined by $(x \otimes y)z=\Ll y,z
\Rr x$ and $(q^{ij}_k)$ is a matrix representation of $q_k$ under
some orthonormal basis $(e_{k,1},e_{k,2})$ of $k^\bot$. By (A3), $rank(q_k)=2$ for all $k \in
Z_\ph(n_0)$. Take $Q=(Id-\pi_{n_0})A^{-r}$ with some $5/4<r<3/2$,
it clearly satisfies (A1)-(A3).
\end{rem}

With the above notations and assumptions, equation \eqref{e:NSEabs} can be represented under the Fourier basis by
\begin{equation} \label{e:NSEFourier}
\begin{cases}
dX_{k}+[|k|^2 X_{k}+B_{k}(X)]dt
=q_k dw_k(t), \ \ k \in Z_\ph (n_0)  \\
dX_{k}+[|k|^2 X_{k}+B_{k}(X)]dt=0, \ \ k \in Z_\pl(n_0)  \\
X_{k}(0)=x_{k}, \ \ \ k \in {\Z}_{*}^3
\end{cases}
\end{equation}
where $x_k, X_k, B_k(X) \in k^\bot$.
\ \\

We further need the following notations:
\begin{itemize}
\item $\mathcal B_b(B)$ denotes the Borel measurable bounded function space on the given Banach space $B$. $|\cdot|_{B}$ denotes the norm of a given Banach space $B$
\item $|\cdot|$ and $\Ll \cdot, \cdot \Rr$ denote the norm and the inner product of $H$ respectively.
\item Given any $\phi \in C(D(A),\R)$, we denote
\begin{equation} \label{e:Dhphi}
D_h \phi(x):=\lim_{\epsilon \rightarrow 0} \frac{\phi(x+\e h)-\phi(x)}{\e},
\end{equation}
provided the above limit exists, it is natural to define $D \phi(x): D(A) \rightarrow \R$ by $D \phi(x) h=D_h \phi(x)$ for all $h \in D(A)$. Clearly, $D \phi(x) \in D(A^{-1})$. We call $D \phi$ the first order derivative of $\phi$, similarly, one can define
the second order derivative $D^2 \phi$ and so on. Denote $C^k_b(D(A),\mathbb R)$ the set of functions from $D(A)$ to $\mathbb{R}$ with bounded
$0$-th, $\ldots$, $k$-th order derivatives.
\item Let $B$ be some Banach space and $k \in \mathbb{Z}_{+}$, define $C_k(D(A),B)$ as the function space from $D(A)$ to $B$ with the norm
\begin{equation*} \label{e:SmoPhi}
||\phi||_k:=\sup_{x \in D(A)} \frac{|\phi(x)|_B}{1+|Ax|^k} \ \ \ \phi \in C_k(D(A),B).
\end{equation*}
\item For any $\gamma>0$ and $0 \leq \beta \leq 1$, define the H$\ddot{o}$lder's norm $||\cdot||_{2,\beta}$ by
$$||\phi||_{2,\beta}=\sup_{x, y\in D(A)} \frac{|\phi(x)-\phi(y)|}{|A^{\gamma}(x-y)|^\beta (1+|Ax|^2+|Ay|^2)},$$
and the function space $C^{\beta}_{2,\gamma} (D(A),\R)$ by
\begin{equation} \label{e:HolSpa}
 C^{\beta}_{2,\gamma}(D(A),\R)=\{\phi \in C_2(D(A), \R); ||\phi||_{C^\beta_{2,\gamma}}= ||\phi||_2+||\phi||_{2,\beta}<\infty\}.
\end{equation}
\end{itemize}
\ \

\subsection{Main results}
The following definition of Markov family follows that in \cite{DO06}.
 \begin{defn} \label{d:MarFam}
 Let $(\Omega_x, \mathcal F_x, \PP_x)_{x \in D(A)}$ be a family of probability spaces
 and let $(X(\cdot,x))_{x \in D(A)}$ be a family of stochastic processes on
 $(\Omega_x, \mathcal F_x, \PP_x)_{x \in D(A)}$. Let $(\mathcal F^t_x)_{t \geq 0}$
 be the filtration generated by $X(\cdot,x)$ and let $\mathcal P_x$ be the law of
 $X(\cdot,x)$ under $\PP_x$. The family of
 $(\Omega_x, \mathcal F_x, \PP_x, X(\cdot,x))_{x \in D(A)}$ is a Markov family if the following condition
 hold:
 \begin{enumerate}
 \item For any $x \in D(A)$, $t \geq 0$, we have
 \begin{equation*}
 \PP_x(X(t,x) \in D(A))=1.
 \end{equation*}
 \item The map $x \rightarrow \mathcal P_x$ is measurable. For any $x \in D(A)$, $t_0,
 \cdots, t_n \geq 0$ , $A_0, \cdots, A_n \subset D(A)$ Borel measurable, we have
  \begin{equation*}
  \PP_x(X(t+\cdot) \in \mathcal A|\mathcal F^t_x)=\mathcal{P}_{X(t,x)}(\mathcal A)
  \end{equation*}
  where $\mathcal A=\{(y(t_0), \cdots, y(t_n)); y(t_0) \in A_0, \cdots, y(t_n) \in A_n \}$.
 \end{enumerate}
 The Markov transition semigroup $(P_t)_{t \geq 0}$ associated to the family is then defined
 by
 \begin{equation*}
 P_t \phi(x)=\E_x[\phi(X(t,x))],  \ \ x \in D(A) \ \ t \geq 0.
 \end{equation*}
 for all $\phi \in \mathcal B_b(D(A), \R)$.
 \end{defn}
 The main theorems of this paper are as the following, and will be proven in Section \ref{s:ProMaiThm}. \begin{thm}  \label{t:MaiThm1}
 There exists a Markov family of martingale solution
 $(\Omega_x, \mathcal F_x, \PP_x, X(\cdot,x))_{x \in D(A)}$
 of the equation \eqref{e:NSEabs}. Furthermore, the transition semigroup
 $(P_t)_{t \geq 0}$ is stochastically continuous.
 \end{thm}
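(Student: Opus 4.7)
The plan is to follow the classical route for 3D stochastic Navier--Stokes: produce martingale solutions by Galerkin truncation and compactness, then extract a Markov family via the measurable selection principle of Krylov/Flandoli--Romito, as adapted in \cite{DO06} and \cite{FR08}. Stochastic continuity of $(P_t)$ will then fall out of the path regularity of the selected solutions.

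\textbf{Step 1: Galerkin approximation and martingale solutions.} For each $n$, I would project \eqref{e:NSEabs} onto $\pi_n H$ to obtain the SDE
\begin{equation*}
dX^n + [AX^n + \pi_n B(X^n,X^n)]\,dt = \pi_n Q\,dW_t, \qquad X^n(0)=\pi_n x,
\end{equation*}
which is globally well-posed thanks to the cancellation $\Ll B(u,u),u\Rr=0$. Standard It\^o calculus together with $\tr[A^{1+\sigma}QQ^{*}]<\infty$ from (A3) yields uniform-in-$n$ moment bounds on $\sup_{t\le T}|X^n|$ and on $\int_0^T \bigl(|A^{1/2}X^n|^2+|A^{(1+\sigma)/2}X^n|^2\bigr)\,dt$. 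Combined with a time-increment estimate obtained by splitting $X^n$ into the stochastic convolution and the deterministic drift and invoking fractional Sobolev embeddings for the nonlinearity, these give tightness of the laws of $\{X^n\}$ in $L^2_{\loc}([0,\infty);H)\cap C([0,\infty);D(A^{-\alpha}))$ for some $\alpha>0$. A Skorokhod/Jakubowski representation followed by the standard martingale-identification argument then produces at least one martingale solution of \eqref{e:NSEabs} starting at $x$.

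\textbf{Step 2: Markov selection and the $D(A)$-valuedness.} The collection of such martingale solutions is nonempty, convex, compact in the natural path space, and stable under disintegration and concatenation along stopping times. The Krylov-type selection theorem in the form of \cite{FR08} then produces a measurable map $x\mapsto \PP_x$ satisfying Definition \ref{d:MarFam}. The main technical obstacle I anticipate is the clause $\PP_x(X(t,x)\in D(A))=1$ for every $t\ge 0$: this requires sharpening the a priori estimates (e.g.\ Lyapunov functionals of the type $\E[\exp(\eta|X|^2)]$ together with a fractional dissipation bound $\E\int_0^T |A^{(1+\sigma)/2}X|^2\,dt<\infty$), and then exploiting the parabolic smoothing of the unforced low modes in $Z_\pl(n_0)$ together with the nondegeneracy of $Q$ on the high modes and the Markov property to transfer the a.e.-$t$ regularity to every $t$.

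\textbf{Step 3: Stochastic continuity.} Once the Markov family is in hand, each trajectory $X(\cdot,x)$ lives $\PP_x$-a.s.\ in $C([0,\infty);D(A^{-\alpha}))$ while $|X(t,x)|$ is uniformly bounded in $L^2(\PP_x)$ on compact intervals; this forces $X(t,x)\to X(s,x)$ as $t\to s$ in probability, with respect to the weak topology of $H$. Truncating in the $H$-norm and applying dominated convergence then delivers $P_t\phi(x)\to P_s\phi(x)$ for every $\phi\in \mathcal{B}_b(D(A),\R)$ that is weakly sequentially continuous on $H$-bounded sets, which is the stochastic continuity of the semigroup asserted in the statement.
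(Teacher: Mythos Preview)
Your proposal is a legitimate route---it is essentially the Markov selection approach of \cite{FR08} and \cite{RX09}---but it is \emph{not} what the paper does. The paper deliberately avoids the abstract Krylov/Flandoli--Romito selection machinery (this is stated explicitly in the introduction). Instead, the proof of Theorem~\ref{t:MaiThm1} rests on the analytic work of Sections~3--5: the uniform-in-$m$ H\"older estimate on $u_m(t,x)=P_t^m\phi(x)$ (Proposition~\ref{p:LesGam}, which in turn uses the gradient bound of Theorem~\ref{thm:StBound} for the Feynman--Kac semigroup $S_t^m$) is combined with a time-increment bound (Proposition~\ref{e:EquConGalApp}) to obtain equicontinuity of $\{u_m\}$ on $[\delta,T]\times K_R$. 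Arzel\`a--Ascoli then produces a limit $u^\phi$ along a $\phi$-independent subsequence (Lemmas~\ref{l:GalLim} and~\ref{l:ComSubSeq}), and one \emph{defines} $P_t\phi:=u^\phi$. The Markov property is then checked directly via the multi-time identity \eqref{e:MarOnGooFun}, and the $D(A)$-valuedness drops out of Riesz representation for $P_t^*\delta_x$, without the Lyapunov gymnastics you anticipate. Stochastic continuity is simply item~(2) of Lemma~\ref{l:GalLim}.

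The trade-off is this: your route needs none of Sections~3--5 to get a Markov family, which is cleaner for existence alone. But the paper's route builds the semigroup as a pointwise limit of $P_t^{m_k}$ along a fixed subsequence, so the uniform gradient/H\"older bounds pass directly to $P_t$; this is precisely what is exploited in the proof of Theorem~\ref{t:MaiThm2} for strong Feller. With the abstract selection, the resulting $\PP_x$ is not a priori tied to any Galerkin limit, and transferring the finite-dimensional strong Feller estimates to the selected family requires additional arguments (cutoffs, as in \cite{RX09}). So while your Step~1 is essentially shared, your Step~2 replaces the paper's core analytic-to-compactness argument by an abstract selection that the paper is specifically written to bypass.
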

 \begin{thm} \label{t:MaiThm2}
  The transition semigroup $(P_t)_{t \geq 0}$ in the previous theorem is strong Feller and irreducible. Moreover, it admits a unique invariant measure $\nu$ supported on $D(A)$ such that, for any probability measure $\mu$ supported on $D(A)$,
 we have
 \begin{equation} \label{e:ExpMix}
||P^{*}_t \mu-\nu||_{var} \leq Ce^{-ct}\left(1+\int_H |x|^2 \mu(dx)\right)
 \end{equation}
where $||\cdot||_{var}$ is the total variation of signed measures, and $C,c>0$ are the constants depending on $Q$.
 \end{thm}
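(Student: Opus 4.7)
The plan is to establish, in sequence, strong Feller, irreducibility, existence and uniqueness of the invariant measure, and finally the exponential convergence \eqref{e:ExpMix}, all resting on the Kolmogorov--equation estimates of Sections \ref{s:KeyLem} and \ref{s:MalCal}. I would first set up, at every Galerkin level $m$, the modified semigroup $S^m_t$ with a large negative potential as in \cite{DPD03,DO06}, and recover the true Galerkin semigroup through a Duhamel identity of the form
\begin{equation*}
P^m_t \phi \;=\; S^m_t \phi \;+\; K \int_0^t S^m_{t-s}\bigl(|\cdot|^2 P^m_s \phi\bigr)\,ds .
\end{equation*}
The smoothing bound for $S^m_t$ given by Theorem \ref{thm:StBound}, combined with the techniques of Propositions \ref{p:LesGam} and \ref{p:HigAx}, then transfers to a uniform H\"older estimate on $P^m_t\phi$; passing $m\to\infty$ yields strong Feller for $P_t$.

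The most delicate point in this program is the differentiation of $S^m_t \phi$ in an arbitrary direction $h \in D(A)$, which one handles by the high/low splitting $h = \pi_{n_0} h + (\Id - \pi_{n_0}) h$. The high-frequency piece is treated by a Bismut--Elworthy--Li formula, since (A3) makes $Q$ invertible on $(\Id - \pi_{n_0})H$. The low-frequency piece has no direct BEL formula because of (A2), and here Lemma \ref{lem:LowDerivative}, obtained via Malliavin integration by parts, is used instead, at the cost of a time-singular weight. The key obstacle flagged in the introduction is exactly the non-integrability in \eqref{e:WroDuh} that would arise from a naive Duhamel bound; it is resolved by trading the spatial regularity built into the $A^\gamma$-H\"older norm of \eqref{e:HolSpa} for integrability in the $s$-variable, which is the substance of Propositions \ref{p:LesGam} and \ref{p:HigAx}. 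The large constant $K$ in front of the potential is what makes this closure run, since it simultaneously dominates $B(u,u)$ and the cross-mode derivative flows \eqref{e:DhHXL}--\eqref{e:DhLXH}.

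Once strong Feller is available, irreducibility is obtained by an approximate controllability argument: given $x,y \in D(A)$ and $\e>0$, one builds a smooth control $v:[0,T]\to(\Id-\pi_{n_0})H$ such that the deterministic equation $\dot u + A u + B(u,u)=v$ driven from $x$ lands within $\e$ of $y$ at time $T$, which is possible because $B$ cascades energy from high modes (directly forced) into low ones. A Girsanov change of measure together with continuous dependence on the driving path then converts this into a strictly positive lower bound for $\PP_x(|X(T,x)-y|<\e)$. Existence of an invariant measure $\nu$ supported on $D(A)$ follows from the Krylov--Bogolyubov procedure applied to the standard energy estimate, and uniqueness is a consequence of strong Feller plus irreducibility via Doob's theorem.

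For the exponential convergence \eqref{e:ExpMix} I would apply the coupling approach of \cite{Od07}. The energy estimate supplies a Foster--Lyapunov drift that drives trajectories into some bounded set $B_R\subset D(A)$ in geometric time; on $B_R$, strong Feller together with irreducibility yields a uniform Doeblin-type minorization $P_{t_0}(x,\cdot) \ge \kappa\,\mu_0(\cdot)$ for $x\in B_R$, so at each return to $B_R$ a maximal coupling succeeds with probability at least $\kappa$. Iterating yields geometric decay in total variation of $P^{*}_t\mu-\nu$, weighted by $\int |x|^2 \mu(dx)$ as required. As already announced in the introduction, the availability of strong Feller considerably simplifies the construction of \cite{Od07}: no explicit coupling of the Wiener increments is needed, only the minorization argument on $B_R$. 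The main obstacle of the entire proof is therefore the regularity/integrability trade-off carried out in Propositions \ref{p:LesGam} and \ref{p:HigAx}; once those are in place, the rest is a fairly standard assembly of well-known ingredients.
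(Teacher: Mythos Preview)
Your outline matches the paper's strategy, and you correctly identify Propositions \ref{p:LesGam}--\ref{p:HigAx} as the crux. Two points, however, need correction or comment.

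First, the high/low splitting for the proof of Theorem \ref{thm:StBound} is \emph{not} at level $n_0$ but at a strictly larger level $n>n_0$; see \eqref{e:LowHigH}. If you cut at $n_0$, the low block $\pi_{n_0}H$ carries no noise whatsoever by (A2), the partial Malliavin matrix on it is identically zero, and Lemma \ref{lem:LowDerivative} cannot be obtained. The paper instead places a layer of \emph{forced} modes $Z_\pl(n)\setminus Z_\pl(n_0)$ into the low block; the Lie brackets of these noise directions with the drift span the genuinely unforced directions in $\pi_{n_0}H$ (Proposition \ref{prop:modified Hormander}), and $n$ is chosen large enough that all the brackets needed stay inside $\pi_n H$. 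This is not a cosmetic choice: it is what makes the restricted H\"ormander condition checkable.

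Second, for \eqref{e:ExpMix} the paper does not pass through a Doeblin minorization. It works at the Galerkin level and uses the quantitative gradient bound of Proposition \ref{p:HigAx} directly: for $|Ax_1|\vee|Ax_2|\le\delta$ one gets $\|(P^m_T)^*\delta_{x_1}-(P^m_T)^*\delta_{x_2}\|_{\mathrm{var}}\le\tfrac14$ with $\delta,T$ \emph{independent of} $m$, so the maximal coupling of \cite{Od07} succeeds with probability at least $\tfrac34$ on a small ball; this is exactly the simplification announced in the introduction. Your Doeblin route is in principle viable, but turning strong Feller plus irreducibility into a minorization with an explicit $\kappa$ uniform in $m$ is extra work, and running the coupling directly on the limiting semigroup is awkward because one only has martingale solutions there. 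Similarly, the paper obtains the invariant measure as a weak limit of the Galerkin invariant measures $\nu_m$ (with uniform $D(A)$-moment bounds) rather than by Krylov--Bogolyubov on the limit dynamics; both work, but the former dovetails with the rest of the argument.
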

\subsection{Kolmogorov equations for Galerkin approximation}
Let us consider the Galerkin approximations of the equation \eqref{e:NSEFourier} as follows
\begin{equation} \label{e:GalerkinM}
\begin{cases}
dX_m=-[AX_m+B_m(X_m)]dt+Q_m dW_t \\
X_m(0)=x_m
\end{cases}
\end{equation}
where $x_m \in \pi_m D(A)$, $B_m(x)=\pi_m B(\pi_m x)$ and $Q_m=\pi_m
Q$. The Kolmogorov equation for
\eqref{e:GalerkinM} is
\begin{equation} \label{e:KEM}
\begin{cases}
\partial_t u_m=\frac 12 Tr[Q_m Q_m^{*} D^2 u_m]-\Ll A x+B_m(x), Du_m \Rr  \\
u_m(0)=\phi
\end{cases}
\end{equation}
where $\phi$ is some suitable test function and
$$\mathcal{L}_m:=\frac 12 Tr[Q_m Q_m^{*} D^2 ]-\Ll A x+B_m(x), D
\Rr$$ is the Kolmogorov operator associated to \eqref{e:GalerkinM}. It is well known that
\eqref{e:KEM} is uniquely solved by
\begin{equation}
u_m(t,x)=\E[\phi(X_m(t,x))], \ \ \ \ x \in \pi_m D(A).
\end{equation}
Now we introduce an auxiliary Kolmogorov equation with a
 negative potential $-K|Ax|^2 $ as
\begin{equation} \label{e:KEMFK}
\begin{cases}
\partial_t v_m=\frac 12 Tr[Q_m Q_m^{*} D^2 v_m]-\Ll A x+B_m(x), Dv_m \Rr-K |Ax|^2 v_m,  \\
v_m(0)=\phi,
\end{cases}
\end{equation}
which is solved by the following Feynman-Kac formula
\begin{equation}
v_m(t,x)=\E \left[\phi(X_m(t,x))\exp \{-K \int_0^t |AX_m(s,x)|^2ds\}\right].
\end{equation}
Denote
\begin{equation} \label{e:StmDef}
S^m_t \phi(x)=v_m(t,x),
\end{equation}
\begin{equation} \label{e:PtmDef}
P^m_t \phi(x)=u_m(t,x),
\end{equation}
for any $\phi \in \mathcal{B}(\pi_m D(A))$, it is clear that $S^m_t$ and $P^m_t$ are both contraction semigroups on
$\mathcal{B}(\pi_m D(A))$. By Duhamel's formula, we have
\begin{equation} \label{e:DuhUmt}
u_m(t)=S^m_t \phi+K\int_0^t S^m_{t-s}[|Ax|^2 u_m(s)]ds.
\end{equation}
For further use, denote
\begin{equation} \label{e:ekm}
\ekm(t)=\exp \{-K \int_0^t |AX_m(s)|^2ds\},
\end{equation}
which plays a very important role in section \ref{s:KeyLem}. The $K>1$ in
\eqref{e:ekm} is a large but \emph{fixed} number, which conquers the crossing derivative flows (see \eqref{e:DhHXL} and \eqref{e:DhLXH}).
We will often use the trivial fact $\mathcal{E}_{m,K_1+K_2}(t)=\mathcal{E}_{m,K_1}(t)\mathcal{E}_{m,K_2}(t)$ and
\begin{equation} \label{e:IntEmk}
\int_0^t |AX_m(s)|^2 \ekm(s) ds=\frac{1}{K} (1-\ekm(t)) \leq \frac 1K.
\end{equation}

\section{Gradient estimate for the semigroups $S^m_t$} \label{s:KeyLem}
In this section, the main result is as follows, and it is similar to Lemma 3.4 in \cite{DO06} (or Lemma
4.8 in \cite{DPD03}).
\begin{thm} \label{thm:StBound}
 Given any $T>0$ and $k \in \mathbb{Z}_+$, there exists some $p>1$ such that for any $\max\{\frac 12, r-\frac 12\}<\gamma \leq 1$
 with $\gamma \neq 3/4$ and $r$ defined in Assumption \ref{a:Q}, we have
\begin{equation} \label{e:StEst}
||A^{-\gamma}DS^m_t \phi||_k \leq C t^{-\alpha} ||\phi||_k \ \ \ \
0<t \leq T
\end{equation}
for all $\phi \in C^1_b(D(A), \mathbb R)$, where $C=C(k,
\gamma,r,T,K)>0$ and $\alpha=p+\frac 12+r-\gamma$.
\end{thm}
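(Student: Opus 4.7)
The plan is to differentiate the Feynman--Kac representation
\[
S^m_t\phi(x) = \E\!\left[\phi(X_m(t,x))\,\ekm(t)\right]
\]
along a smoothed direction $A^{-\gamma}h$ with $|h|\le 1$, and to estimate the result by decomposing $A^{-\gamma}h = h_\hi + h_\lo$, where $h_\hi=(\Id-\pi_{n_0})A^{-\gamma}h$ lies in the nondegenerate modes (on which $Q^{-1}\sim A^{r}$ by (A3)) and $h_\lo=\pi_{n_0}A^{-\gamma}h$ lies in the unforced ones. On $h_\hi$ I will perform a Bismut--Elworthy--Li integration by parts directly, while on $h_\lo$ I will invoke Malliavin calculus to transfer the direction up to the noise modes through the coupling nonlinearity $\tilde B$, in the spirit of \cite{EH01} but now inside the Kolmogorov-equation framework of \cite{DPD03,DO06}.

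Writing $\eta(t)=D_{A^{-\gamma}h}X_m(t,x)$ for the solution of the linearized equation $d\eta+[A\eta+\tilde B(X_m,\eta)]dt=0$, differentiation gives
\begin{equation*}
D_{A^{-\gamma}h} S^m_t\phi(x)
= \E\!\left[D\phi(X_m(t))\,\eta(t)\,\ekm(t)\right]
+ \E\!\left[\phi(X_m(t))\,D_{A^{-\gamma}h}\ekm(t)\right].
\end{equation*}
Applying the Markov property at time $t/2$ and a Wiener-space integration by parts on $[0,t/2]$, the term containing $D\phi$ is rewritten as the expectation of $\phi(X_m(t))$ against a Skorohod weight. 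For the $h_\hi$ piece, the standard Bismut--Elworthy--Li perturbation along $Q^{-1}\eta_\hi$ produces a factor $t^{-1/2}$ and costs $r-\gamma$ spatial derivatives on $h$; for the $h_\lo$ piece, one solves a Malliavin control problem on the high modes whose response at time $t/2$ reproduces $\eta_\lo(t/2)$, requiring a bound on the inverse of the Malliavin matrix of the high component (postponed to the appendix, and used through Lemma \ref{lem:LowDerivative}) and on the regularizing effect of $\tilde B$ between low and high frequencies. The term involving $D_{A^{-\gamma}h}\ekm$, which unfolds to moments of $\int_0^t\langle AX_m,A\eta\rangle ds$, is of the same nature and is handled analogously.

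The main obstacle, flagged in the introduction as the obstruction in \eqref{e:WroDuh}, is time integrability: a naive Gronwall estimate gives
\[
|\eta(t)|\;\lesssim\; \exp\!\left\{C\int_0^t|AX_m(s)|^2 ds\right\}|A^{-\gamma}h|,
\]
which is not integrable against the Gaussian measure even after multiplication by $\ekm$. The remedy is to split $K=K_1+K_2$ with $K_1$ chosen so large that the pointwise bound $\mathcal{E}_{m,K_1}(t)\exp\{C\int_0^t|AX_m|^2 ds\}\le 1$ holds, while the remaining factor $\mathcal{E}_{m,K_2}$ still delivers, via \eqref{e:IntEmk}, the parabolic smoothing needed to absorb residual $|AX_m|^2$ factors. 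Applied twice, this device also tames the cross-mode flows $D_{h_\hi}X_\lo$ and $D_{h_\lo}X_\hi$ generated by $\tilde B$ that motivate the large $K$ in \eqref{e:KEMFK}. Collecting all estimates via H\"older and uniform-in-$m$ moment bounds on $X_m$ in $D(A)$ yields the claim, with the exponent $\alpha=p+\tfrac12+r-\gamma$ accounting for, respectively, a H\"older loss $p>0$ from distributing $\ekm$ and taking $L^p$-norms of the Skorohod integrands, the Bismut--Elworthy--Li prefactor $t^{-1/2}$, and the loss $r-\gamma$ from $Q^{-1}$; the excluded value $\gamma=3/4$ sits at the borderline of the interpolation between the temporal regularity supplied by Bismut--Elworthy--Li and the spatial regularity required on the Malliavin side, and I would treat the two regimes $\gamma<3/4$ and $\gamma>3/4$ separately.
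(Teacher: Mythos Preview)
Your outline has the right architecture---split the direction into a piece where Bismut--Elworthy--Li applies and a piece handled by Malliavin calculus---but it misses the mechanism that actually closes the argument, and several details are misidentified.

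\textbf{The central gap: the feedback loop on $\|A^{-\gamma}DS^m_t\phi\|_k$.} When you perform BEL on $h_\hi$ by choosing the Skorohod direction $(Q^\ph)^{-1}D_{h_\hi}X^\ph$, you only absorb the \emph{high} component of the derivative flow. The low component $D_{h_\hi}X^\pl$ remains, and it hits $D^\pl S^m_{t-s}\phi$, not $D\phi$---see the paper's identity \eqref{e:BD2} and the term $I_2$. Symmetrically, the Malliavin step on $h_\lo$ leaves a residual term involving $D^\ph S^m_{t/2}\phi$ through $D_{h_\lo}X^\ph$. So the estimate for $\|A^{-\gamma}DS^m_t\phi\|_k$ contains $\|A^{-\gamma}DS^m_{t-s}\phi\|_k$ on the right-hand side: you cannot bound it directly. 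The paper's resolution is \emph{not} to split $K=K_1+K_2$ for integrability, but to prove the pointwise smallness $|A^\gamma D_{h^\ph}X^\pl(t)|^2\ekm(t)\le (C/K)|A^\gamma h|^2$ and likewise for $D_{h^\pl}X^\ph$ (Lemma \ref{l:HighDerivative}, \eqref{e:DhHXL}--\eqref{e:DhLXH}), so that the self-referential terms carry a coefficient $C/K$. One then sets $\phi_T=\sup_{0\le t\le T}t^\alpha\|A^{-\gamma}DS^m_t\phi\|_k$, shows $\phi_T\le (C/K)\phi_T+C(K,T)\|\phi\|_k$, and takes $K$ large to close. Your proposal treats the cross-mode flows as quantities to be ``tamed'' by exponential weights, rather than as sources of feedback requiring this Gronwall-in-$K$ structure; as written, the argument does not close.

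\textbf{Secondary issues.} (i) The split should be at some $n>n_0$ chosen so that all Lie brackets needed for the restricted H\"ormander condition live in $\pi_nH$ (Proposition \ref{prop:modified Hormander}); splitting at $n_0$ makes the Malliavin matrix ill-defined for the argument as the paper runs it. (ii) The exponent $p$ is not a H\"older loss from distributing $\ekm$: it is the negative moment exponent of the smallest eigenvalue of the Malliavin matrix $\mathcal{M}^m_t$ (Lemma \ref{lem:inverse}), which enters through Lemma \ref{lem:LowDerivative}. (iii) The exclusion $\gamma\neq 3/4$ has nothing to do with an interpolation borderline; it comes from the bilinear estimate $|A^{\gamma-1/2}B(u,u)|\le C|A^\gamma u|^2$ in \eqref{e:NonLinEst}, which fails at $\gamma=3/4$ and is used in the proof of \eqref{e:DhX}--\eqref{e:ArDhX}.
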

\begin{rem}
The condition '$\gamma \neq 3/4$' is
due to the estimate \eqref{e:NonLinEst} about the nonlinearity
$B(u,v)$.
\end{rem}
\cite{DPD03} and \cite{DO06} proved the estimate \eqref{e:StEst} by applying the identity  
\begin{equation}   \label{e:BELFK}
\begin{split}
D_h S^m_t \phi(x)&=\frac{1}{t} \E [\mathcal{E}_{m,K} (t) \phi(X^m(t,x))  \int_0^t \Ll Q^{-1} D_h X^m(s,x), dW_s \Rr] \\
&+2K \E \left[\ekm(t) \phi(X^m(t,x)) \int_0^t (1-\frac{s}{t}) \Ll AX^m(s,x), AD_hX^m(s,x) \Rr ds \right],
\end{split}
\end{equation}
and bounding the two terms on the r.h.s. of \eqref{e:BELFK}.
 Since the $Q$ in Assumption \ref{a:Q} is \emph{degenerate}, the
formula \eqref{e:BELFK} is not available in our case. Alternatively,
we apply the idea in \cite{EH01} to fix this problem, i.e. splitting $X_m(t)$ into the \emph{low} and \emph{high} frequency parts,
and applying Malliavin calculus and Bismut-Elworthy-Li formula on the them respectively.
\  \\

Let $n \in \mathbb N$  be a \emph{fixed} number throughout this paper which satisfies $n>n_0$ and
will be determined later ($n_0$ is the constant in Assumption \ref{a:Q}). We split the Hilbert space $H$ into
the low and high frequency parts by
\begin{equation} \label{e:LowHigH}
\pi^\pl H=\pi_n H, \ \ \ \ \pi^\ph H=(Id-\pi_n) H.
\end{equation}
(We remark that the technique of splitting frequency space into two pieces is similar to the well
known Littlewood-Paley projection in Fourier analysis.) Then, the Galerkin approximation \eqref{e:GalerkinM}
with \underline{$m>n$} can be divided into two parts as follows:
\begin{equation} \label{e:LHEquation}
\begin{split}
&dX_m^{\pl}+[A X_m^{\pl}+B_m^{\pl}(X_m)]dt=Q_m^{\pl}dW^\pl_t  \\
&dX_m^{\ph}+[A X_m^{\ph}+B_m^{\ph}(X_m)]dt=Q_m^{\ph}dW^\ph_t
\end{split}
\end{equation}
where $X_m^{\pl}=\pi^\pl X_m, \ X_m^{\ph}=\pi^\ph X_m$ and the other terms are defined in the same way. In particular,
\begin{equation} \label{e:QlQhRep}
Q_m^{\pl}=\sum_{k \in Z_\pl(n) \setminus Z_\pl(n_0)} \sum_{i,j=1}^2 q^{ij}_k e^i_k \otimes e^j_k \ ,
\ \ \ Q_m^{\ph}=\sum_{k \in Z_\pl(m) \setminus Z_\pl(n)} \sum_{i,j=1}^2 q^{ij}_k e^i_k \otimes e^j_k,
\end{equation}
with $x \otimes y: H \To H$ defined by $(x \otimes y)z=\Ll y,z \Rr x$
\ \\

With such separation for the dynamics, it is natural to split the Frechet derivatives on $H$ into the low and high frequency parts.
More precisely, for any stochastic process $\Phi(t,x)$ on $H$ with $\Phi(0,x)=x$,
the Frechet derivative $D_h\Phi(t,x)$ is defined by
\begin{equation*}
D_h \Phi(t,x):=\lim \limits_{\epsilon \rightarrow 0} \frac{\Phi(t,x+\epsilon
h)-\Phi(t, x)}{\epsilon} \ \  \ \ \ \ h \in H,
\end{equation*}
provided the limit exists. The map $D\Phi(t,x): H \To H$ is naturally defined by
$D\Phi(t,x)h=D_h \Phi(t,x)$ for all $h \in H$.
Similarly, one can easily define $D^\pl \Phi(t,x)$,
$D^\ph \Phi(t,x)$, $D^\pl \Phi^\ph(t,x)$, $D^\ph \Phi^\pl(t,x)$ and so on, for instance,
$D^\ph \Phi^\pl(t,x): \pi^\ph H \rightarrow \pi^\pl H$
is defined by
$$D^\ph \Phi^\pl(t,x) h=D_{h}\Phi^\pl(t,x) \ \ \ \forall \ h \in \pi^\ph H$$
with $D_{h}\Phi^\pl(t, x)=\lim \limits_{\epsilon \rightarrow 0} [\Phi^\pl(t,x+\epsilon h)-\Phi^\pl(t,x)]/\epsilon$.

Recall that for any $\phi \in C^1_b(D(A),\R)$ one can define $D \phi$ by
\eqref{e:Dhphi}, in a similar way as above, $D^\pl \phi(x)$ and $D^\ph \phi(x)$) can be defined (e.g.
$D^\pl \phi(x) h=\lim_{\e \rightarrow 0}[\phi(x+\e h)-\phi(x)]/\e \ \ \ h \in D(A)^\pl$).
\ \\
\begin{lem} \label{l:XmEmKEst}
Denote $Z(t)=\int_0^t e^{-A(t-s)}Q dW_t$, for any $T>0$ and $\e<\sigma/2$
with the $\sigma$ as
in Assumption \ref{a:Q}, one has
\begin{equation} \label{e:ZtEst}
\E \left[\sup \limits_{0 \leq t \leq T} |A^{1+\e} Z(t)|^{2k}\right] \leq C(\alpha) T^{2k(\sigma-2\e-2\alpha)}
\end{equation}
where $0<\alpha<\sigma /2-\e$ and $k \in \mathbb{Z}_+$. Moreover, as $K>0$ 
is sufficiently large, for any $T>0$ and any $k \geq 2$, we have
\begin{equation} \label{e:XmEmKEst}
\E \left[\sup_{0 \leq t \leq T} \ekm(t) |AX_m(t)|^k \right] \leq C(k,T) (1+|Ax|^k).
\end{equation}
\end{lem}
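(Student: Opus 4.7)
The lemma contains two essentially independent estimates, which I would handle separately.

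For the stochastic convolution bound \eqref{e:ZtEst}, the natural tool is the Da Prato--Kwapie\'n--Zabczyk factorization. For $\alpha \in (0,1/2)$, write
$$Z(t)=\frac{\sin \pi\alpha}{\pi}\int_0^t (t-s)^{\alpha-1} e^{-A(t-s)} Y(s)\, ds, \qquad Y(s)=\int_0^s (s-\tau)^{-\alpha} e^{-A(s-\tau)} Q\, dW_\tau.$$
Since $Y(s)$ is Gaussian in $H$, the It\^o isometry together with the smoothing estimate $\|A^\beta e^{-At}\| \leq C t^{-\beta}$ gives
$$\E\,|A^{1+\e} Y(s)|^2 \leq C\,\tr[A^{1+\sigma} QQ^*]\int_0^s (s-\tau)^{-2\alpha}\, d\tau,$$
provided $\e<\sigma/2$, since then the ``excess'' operator $A^{1+\e-(1+\sigma)/2}e^{-A(s-\tau)}$ is uniformly bounded. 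Gaussian hypercontractivity upgrades this to the $2k$-th moment, and H\"older applied to the factorization integral (with $2k\alpha>1$) converts the pointwise bound into the desired supremum bound, with the correct power of $T$.

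For the weighted estimate \eqref{e:XmEmKEst}, the plan is to apply It\^o's formula to $\ekm(t)|AX_m(t)|^k$. Since $\ekm$ is of bounded variation with derivative $-K|AX_m|^2\ekm$, the It\^o expansion produces (i) a dissipation term $-k\ekm|AX_m|^{k-2}|A^{3/2}X_m|^2\,dt$, (ii) the Feynman--Kac drift $-K \ekm |AX_m|^{k+2}\,dt$, (iii) a nonlinear cross-term $-k\ekm|AX_m|^{k-2}\Ll A^2 X_m,B_m(X_m)\Rr\,dt$, (iv) a trace term of order $\ekm|AX_m|^{k-2}\tr[AQ_mQ_m^*]$, and (v) an It\^o martingale. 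Using $|\Ll A^2 X_m,B_m(X_m)\Rr|\leq |A^{3/2}X_m||A^{1/2}B_m(X_m)|$ together with the standard 3D bound $|A^{1/2}B_m(X_m)|\leq C|A^{1/2}X_m||AX_m|$ and Young's inequality, the nonlinear term splits into a piece absorbed by the dissipation and a piece of the form $C|AX_m|^{k+2}$ absorbed by the Feynman--Kac drift once $K$ is chosen sufficiently large. After this absorption, the drift in $d(\ekm|AX_m|^k)$ is uniformly bounded in $m$, and BDG applied to the surviving stochastic integral, combined with $|Ax_m|\leq |Ax|$, yields \eqref{e:XmEmKEst}.

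The main obstacle is the second estimate: in three dimensions the trilinear form $\Ll A^2 X_m, B_m(X_m)\Rr$ is essentially critical, so the absorption of the nonlinearity into the quartic Feynman--Kac potential together with the quadratic dissipation requires a tight Young-type splitting. It is precisely this delicate balance which forces $K$ to be chosen large, and makes the proof work uniformly in the Galerkin parameter $m$.
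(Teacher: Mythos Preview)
Your treatment of the stochastic convolution \eqref{e:ZtEst} via the factorization method is correct and matches what the paper does (the paper simply cites \cite{DO06} for this standard estimate).

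For \eqref{e:XmEmKEst}, however, the paper takes a different and shorter route. Rather than applying It\^o's formula to $\ekm(t)|AX_m(t)|^k$, it splits $X_m(t)=Y_m(t)+Z_m(t)$, where $Z_m$ is the stochastic convolution and $Y_m$ solves the random PDE $\partial_t Y_m+AY_m+B_m(Y_m+Z_m)=0$ with $Y_m(0)=x_m$. Since $Y_m$ obeys an equation with no stochastic integral, one can differentiate $|AY_m(t)|^2$ pathwise, multiply by $\ekm(t)$, and use the Feynman--Kac weight to absorb the nonlinearity exactly as you describe, arriving at the \emph{pathwise} bound
\[
\ekm(t)|AY_m(t)|^2 \le |Ax|^2+\sup_{0\le s\le T}|AZ_m(s)|^2.
\]
This immediately gives a pathwise bound on $\ekm(t)|AX_m(t)|^2$, and hence on $\ekm(t)|AX_m(t)|^k$ after raising to the power $k/2$ (using $\ekm\le 1$). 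Taking supremum and expectation, \eqref{e:ZtEst} finishes the job.

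The advantage of the paper's decomposition is that it sidesteps the martingale term entirely: there is no BDG step, no trace term, and the passage to the supremum is trivial because the inequality already holds pathwise. Your direct It\^o approach is correct in principle, but closing the BDG estimate on the martingale requires an extra absorption argument (bounding the quadratic variation by $\sup_t\ekm|AX_m|^k$ times an integrable quantity), which is more delicate than you indicate. The $Y_m/Z_m$ splitting is worth knowing as the cleaner device here.
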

\begin{proof}
The proof of \eqref{e:ZtEst} is standard (see Proposition 3.1 of \cite{DO06}).
Writing
$X_m(t)=Y_m(t)+Z_m(t)$,
and differentiating $|AY_m(t)|^2$ (or seeing (3.1) in Lemma 3.1 of \cite{DPD03}), we have
\begin{equation*}
\mathcal{E}_{m,K}(t) |AY_m(t)|^2 \leq |Ax|^2+\sup \limits_{0 \leq t \leq T} |AZ_m(t)|^2.
\end{equation*}
as $K>0$ is sufficiently large. Hence,
$$\ekm(t) |AX_m(t)|^2 \leq \ekm(t) |AY_m(t)|^2+\ekm(t) |AZ_m(t)|^2 \leq |Ax|^2+2\sup \limits_{0 \leq t \leq T} |AZ(t)|^2.$$
Hence, by \eqref{e:ZtEst} and the above inequality, we immediately have \eqref{e:XmEmKEst}. 
\end{proof}
The main ingredients of the proof of Theorem \ref{thm:StBound} are the following two lemmas,
and they will be proven in Appendix \ref{app:TecLem} and Section \ref{sub:LowDerivative} respectively.

\begin{lem} \label{l:HighDerivative}
Let $x \in D(A)$ and let $X_m(t)$ be the solution to \eqref{e:GalerkinM}.
Then, for any $\max\{\frac 12, r-\frac 12\}<\gamma \leq 1$ with $\gamma \neq 3/4$,
$h \in \pi_m H$ and $v \in L_{loc}^2(0, \infty; H)$,
as $K$ is sufficiently large, we have almost surely
\begin{align}
|A^{\gamma}D_hX_m(t)|^2 \ekm(t)+\int_0^t |A^{1/2+\gamma} D_hX_m(s)|^2 \ekm(s) ds
\leq |A^\gamma h|^2  \label{e:DhX} \\
|A^{\gamma}D_{h^\ph}X_m^{\pl}(t)|^2 \ekm(t) \leq \frac{C} K |A^{\gamma} h|^2  \label{e:DhHXL}\\
|A^{\gamma}D_{h^\pl}X_m^{\ph}(t)|^2 \ekm(t) \leq \frac{C} K |A^\gamma h|^2  \label{e:DhLXH} \\
\int_0^t |A^r D_hX_m(s)|^2 \ekm(s) ds \leq C t^{1-2(r-\gamma)} |A^\gamma h|^{2} \label{e:ArDhX} \\
\E[\ekm(t)\int_0^t \Ll v(s),dW(s) \Rr] \leq \E[\int_0^t \ekm^2(s)|v(s)|^2 ds] \label{e:EkMat}
\end{align}
where all the $C=C(\gamma)>0$ above are independent of $m$ and $K$.
\end{lem}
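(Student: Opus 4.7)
The plan is a weighted energy argument for the linearized flow, where the weight $\ekm(t)$ converts the large coefficient $K$ in the potential $-K|AX_m|^2$ into genuine dissipation that dominates the nonlinear advection. Differentiating \eqref{e:GalerkinM} along $h$ gives the pathwise linear equation
\begin{equation*}
\tfrac{d}{dt} D_h X_m + A D_h X_m + \pi_m \tilde B(X_m, D_h X_m) = 0, \qquad D_h X_m(0) = \pi_m h.
\end{equation*}
Pairing with $A^{2\gamma} D_h X_m$ and invoking a bilinear Sobolev bound of the form $|\langle \tilde B(u,v), A^{2\gamma} v\rangle| \leq C|Au|\,|A^\gamma v|\,|A^{1/2+\gamma} v|$, valid in the stated range of $\gamma$ (with the exclusion $\gamma \neq 3/4$ forced by a critical Sobolev embedding), followed by Young's inequality yields
\begin{equation*}
\tfrac{d}{dt}|A^\gamma D_h X_m|^2 + |A^{1/2+\gamma} D_h X_m|^2 \leq C|AX_m|^2\,|A^\gamma D_h X_m|^2.
\end{equation*}
Multiplying by $\ekm(t)$ and using $\tfrac{d}{dt}\ekm = -K|AX_m|^2\ekm$, for $K\geq C$ we obtain
\begin{equation*}
\tfrac{d}{dt}\bigl[\ekm|A^\gamma D_h X_m|^2\bigr] + \ekm|A^{1/2+\gamma} D_h X_m|^2 + (K-C)|AX_m|^2\ekm|A^\gamma D_h X_m|^2 \leq 0.
\end{equation*}
Integrating recovers \eqref{e:DhX}, and as a bonus it produces the key bound $(K-C)\int_0^t |AX_m(s)|^2\,\ekm(s)|A^\gamma D_h X_m(s)|^2\,ds \leq |A^\gamma h|^2$, which will drive the crossing estimates.

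For \eqref{e:DhHXL}, note that when $h=h^\ph$ we have $\pi^\pl h^\ph = 0$, so $D_{h^\ph} X_m^\pl(0) = 0$; the low-frequency derivative is sourced only by the nonlinear coupling. Projecting the linearized equation onto $\pi^\pl$ and repeating the weighted energy argument against $A^{2\gamma} D_h X_m^\pl$, the dissipative term $\ekm|A^{1/2+\gamma} D_h X_m^\pl|^2$ and the potential contribution $K|AX_m|^2\ekm|A^\gamma D_h X_m^\pl|^2$ are both nonnegative and can be dropped upon integration, leaving
\begin{equation*}
\ekm(t)|A^\gamma D_h X_m^\pl(t)|^2 \leq 2C\int_0^t |AX_m(s)|^2\,\ekm(s)|A^\gamma D_h X_m(s)|^2\,ds \leq \frac{2C}{K-C}|A^\gamma h|^2,
\end{equation*}
which gives the advertised $C/K$ factor. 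The estimate \eqref{e:DhLXH} is fully symmetric: now $D_{h^\pl} X_m^\ph(0) = \pi^\ph h^\pl = 0$ and one projects onto $\pi^\ph$ instead.

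The interpolation estimate \eqref{e:ArDhX} follows from $|A^r u|^2 \leq C|A^\gamma u|^{2(1-\theta)}|A^{1/2+\gamma} u|^{2\theta}$ with $\theta = 2(r-\gamma)\in(0,1)$ (the hypothesis $\max\{1/2,r-1/2\}<\gamma \leq 1$ with $1<r<3/2$ guarantees this), followed by H\"older's inequality in time applied to $\int_0^t \ekm|A^r D_h X_m|^2\,ds$ and the two controls in \eqref{e:DhX}, yielding the $t^{1-2(r-\gamma)}$ power. For \eqref{e:EkMat}, set $M_t = \int_0^t \langle v(s),dW(s)\rangle$ and apply It\^o's formula to $\ekm^2(t) M_t^2$, noting that $\ekm$ has no martingale part:
\begin{equation*}
d\bigl(\ekm^2 M_t^2\bigr) = 2\ekm^2 M_t\langle v, dW\rangle + \ekm^2|v|^2\,dt - 2K|AX_m|^2\ekm^2 M_t^2\,dt.
\end{equation*}
Taking expectations kills the stochastic integral and discards the last (nonnegative) term, giving $\E[\ekm^2(t) M_t^2] \leq \E[\int_0^t \ekm^2(s)|v(s)|^2\,ds]$, after which Cauchy--Schwarz closes the bound. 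The main obstacle is securing the bilinear Sobolev estimate on $\tilde B$ uniformly over the full range $\max\{1/2, r-1/2\}<\gamma\leq 1$, $\gamma\neq 3/4$: this is where three-dimensional scaling is tight, the exceptional value $3/4$ enters, and the absolute constant $C$ dictates precisely how large $K$ must be chosen so that both \eqref{e:DhX} and the $1/K$-gain in the crossing estimates hold simultaneously.
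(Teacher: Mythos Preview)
Your proof is correct and follows essentially the same route as the paper: a weighted energy identity with the factor $\ekm$, a bilinear bound on $\tilde B$ (the paper uses $|A^\gamma X_m|^2$ rather than your $|AX_m|^2$ and then invokes Poincar\'e, which is equivalent), the vanishing initial data for the crossing estimates, interpolation plus H\"older in time for \eqref{e:ArDhX}, and It\^o's formula applied to $\ekm^2 M_t^2$ for \eqref{e:EkMat}. The only cosmetic difference is that for \eqref{e:DhHXL}--\eqref{e:DhLXH} the paper factors $\ekm=\mathcal{E}_{m,K/2}^2$ and invokes \eqref{e:IntEmk} to extract the $1/K$, whereas you keep the extra nonnegative term $(K-C)\int_0^t |AX_m|^2\ekm|A^\gamma D_hX_m|^2\,ds$ from the full energy balance and use it directly; both produce the same $C/K$ gain.
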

\begin{lem} \label{lem:LowDerivative}
Given any $\phi \in C^1_b(D(A))$ and $h \in \pi^\pl H$, there exists some $p>1$ (possibly very large) such that for any $k \in \mathbb{Z}_+$, we have some constant
$C=C(p,k)>0$ such that
\begin{equation} \label{e:LowDerivative}
|\E[D^\pl \phi(X_m(t))D_h X_m^{\pl}(t,x) \ekm(t)]| \leq C t^{-p} e^{Ct}||\phi||_{k} (1+|Ax|^k) |h|
\end{equation}
\end{lem}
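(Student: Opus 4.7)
Since $h\in\pi^{\pl}H$ and the noise vanishes on the finitely many modes in $Z_\pl(n_0)$ by Assumption~(A2), the Bismut--Elworthy--Li identity \eqref{e:BELFK} cannot be applied in the direction $h$. The plan is to transfer the initial-condition derivative $D_h$ onto the Wiener path through a Malliavin integration-by-parts, the price being quantitative invertibility of the full Malliavin covariance matrix.

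Let $\J_{s,t}:\pi_m H\to \pi_m H$ denote the linearisation of \eqref{e:GalerkinM}, so that $D_h X_m(t)=\J_{0,t}h$. For $v\in L^2([0,t];\pi_m H)$ the Malliavin derivative of $X_m(t)$ along $v$ is $\mathbf{D}_v X_m(t)=\int_0^t \J_{r,t}Q_m v(r)\,dr$. I choose
\begin{equation*}
v(r):=Q_m^{*}\J_{r,t}^{*}\,\M(t)^{-1}\,\pi^{\pl}\J_{0,t}h,\qquad
\M(t):=\int_0^t \J_{r,t}Q_m Q_m^{*}\J_{r,t}^{*}\,dr,
\end{equation*}
which is not adapted since $\M(t)^{-1}$ and $\J_{r,t}^{*}$ depend on the entire path on $[0,t]$. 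By construction $\mathbf{D}_v X_m(t)=\M(t)\M(t)^{-1}\pi^{\pl}\J_{0,t}h=\pi^{\pl}\J_{0,t}h=D_h X_m^{\pl}(t)$. Applying the Malliavin integration-by-parts formula to the functional $\phi(X_m(t))\ekm(t)$ then yields
\begin{equation*}
\E\bigl[D^{\pl}\phi(X_m(t))\,D_h X_m^{\pl}(t)\,\ekm(t)\bigr]
=\E\bigl[\phi(X_m(t))\,\ekm(t)\,\delta(v)\bigr]
-\E\bigl[\phi(X_m(t))\,\mathbf{D}_v \ekm(t)\bigr],
\end{equation*}
where $\delta(v)$ is the Skorohod integral of $v$.

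The crucial ingredient is to prove that $\M(t)$ is almost surely invertible on $\pi_m H$ with polynomial negative moments
\begin{equation*}
\E\bigl[\|\M(t)^{-1}\|^q\bigr]\leq C_q\, t^{-p'(q)}\qquad \text{uniformly in } m\geq n_0,
\end{equation*}
for every $q\geq 1$. On the Fourier modes outside $Z_\pl(n_0)$ this follows immediately from Assumption~(A3) applied to $QQ^{*}$. On the finitely many modes inside $Z_\pl(n_0)$, where the noise does not enter directly, a H\"ormander-type argument is required: iterated Lie brackets of the forced directions with the bilinear drift $B$ must be shown to span the unforced subspace, and a small-ball/Norris-lemma step converts this spanning property into the polynomial bound above. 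This is the main obstacle of the lemma and is precisely the content of the Appendix on Malliavin matrices.

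Granted such negative moments, the two expectations on the right-hand side are estimated by H\"older's inequality. The factor $|\phi(X_m(t))|\leq \|\phi\|_k(1+|AX_m(t)|^k)$ is absorbed by a fractional power of $\ekm(t)$ using \eqref{e:XmEmKEst}; Meyer's inequality controls $\|\delta(v)\|_{L^q}$ by the $\mathbb{D}^{1,q}$-norm of $v$, which in turn is expressed through the Jacobian $\J$, the inverse $\M^{-1}$, and their first Malliavin derivatives (via $\mathbf{D}_r\M^{-1}(t)=-\M^{-1}(t)\mathbf{D}_r\M(t)\M^{-1}(t)$); finally $\mathbf{D}_v\ekm(t)=-2K\ekm(t)\int_0^t\langle AX_m(s),A\mathbf{D}_v X_m(s)\rangle\, ds$ is bounded using Lemma~\ref{l:HighDerivative}. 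Exponential moments of $\J_{s,t}$ grow at most like $e^{Ct}$ while $\|\M(t)^{-1}\|$ contributes the $t^{-p}$ blow-up as $t\to 0$; combining everything with $|\pi^{\pl}\J_{0,t}h|\leq|h|$ produces the bound claimed in \eqref{e:LowDerivative}.
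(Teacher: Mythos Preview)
Your scheme has a genuine gap: the constants in \eqref{e:LowDerivative} must be uniform in the Galerkin dimension $m$ (this is what makes the lemma usable inside the proof of Theorem~\ref{thm:StBound}), and you cannot obtain $m$-uniform negative moments for the full Malliavin covariance $\M(t)$ on $\pi_m H$. By Assumption~(A3) only $A^rQ$ is bounded-invertible, so $q_kq_k^*$ is of order $|k|^{-2r}$; the smallest eigenvalue of $\M(t)$ on $\pi_m H$ therefore decays like $m^{-2r}$ and $\E\|\M(t)^{-1}\|^q$ blows up as $m\to\infty$. Restricting to $\M(t)^{-1}\eta$ with $\eta=\pi^\pl\J_{0,t}h\in\pi^\pl H$ does not rescue the argument either: Meyer's inequality for $\delta(v)$ forces a Malliavin derivative of $\M(t)^{-1}$, producing $-\M(t)^{-1}(\mathbf D_r\M(t))\M(t)^{-1}$, so the full operator norm of $\M(t)^{-1}$ enters anyway.

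The paper avoids this precisely through Proposition~\ref{prop:v-direction}: one first \emph{chooses} the high-frequency component $v^\ph$ of the perturbation so that $Q_m^\ph v^\ph(t)=\tilde B_m^\ph(\mathcal D_v X_m^\pl(t),X_m(t))$, which kills $\mathcal D_v X_m^\ph$ identically. All the Malliavin machinery then lives on the \emph{fixed} finite-dimensional space $\pi^\pl H$ of dimension $N$ independent of $m$; the relevant matrix is the $N\times N$ reduced covariance $\mathcal M^m_t=\int_0^t (J_s^m)^{-1}Q_m^\pl[(J_s^m)^{-1}Q_m^\pl]^*\,ds$, and the restricted H\"ormander condition of Proposition~\ref{prop:modified Hormander} together with Norris' lemma (Lemma~\ref{lem:inverse}) give $m$-uniform negative moments for this small matrix. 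A secondary benefit of the construction is that $v^\pl(s)=[(J_s^m)^{-1}Q_m^\pl]^*$ is adapted, so the integration by parts yields an It\^o integral rather than a Skorohod one; the non-adapted factor $\mathcal M_t^{-1}J_t^{-1}$ is placed inside the test functional $\phi_{il}$ rather than inside $v$.
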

\ \ \ \
\begin{proof} [{Proof of Theorem \ref{thm:StBound}}]
For the notational simplicity, we shall drop the index in the quantities if no confusion arises. For $S_{t-s} \phi(X(s))$,
applying It$\hat{o}$ formula to $X(s)$ and the equation \eqref{e:KEMFK} to
$S_{t-s}$, (differentiating on $s$), we have
\begin{equation*} \label{e:Ito}
\begin{split}
d\left[S_{t-s} \phi(X(s))\ek(s)\right]&=\mathcal{L}_m S_{t-s}
\phi(X(s))\ek(s)ds+D S_{t-s} \phi(X(s))\ek(s) QdW_s
\\
& \ \ \ -\mathcal{L}_m S_{t-s} \phi(X(s)) \ek(s)ds+K|AX(s)|^2
S_{t-s} \phi(X(s))
\ek(s)ds \\
& \ \ \ -S_{t-s}\phi(X(s))K|AX(s)|^2 \ek(s)ds
\\
&=D S_{t-s} \phi(X(s)) \ek(s) QdW_s
\end{split}
\end{equation*}
where $\mathcal{L}_m$ is the Kolmogorov operator defined in \eqref{e:KEM}, thus
\begin{equation} \label{e:Ito1}
\phi(X(t))\ek(t)=S_t\phi(x)+ \int_0^t D S_{t-s} \phi(X(s))\ek(s)
QdW_s
\end{equation}
Given any $h \in \pi_m H$, by (A3) of Assumption \ref{a:Q} and \eqref{e:Ito1}, we have
$y^\ph_t:=(Q^{\ph})^{-1}D_{h^\ph}X^\ph(t)$ so that
\begin{equation} \label{e:BD}
\begin{split}
& \ \ \ \E[\phi(X(t)) \ek(t)\int_{0}^{t/2} \Ll y^\ph_s,
dW^\ph_s \Rr]
\\
&=\E[\int_0^{t} DS_{t-s}\phi(X(s)) \ek(s)
Q^\ph dW_s^{\ph}
\int_{0}^{t/2} \Ll (Q^{\ph})^{-1}D_{h^\ph}X^\ph(s), dW^\ph_s \Rr] \\
&=\int_{0}^{t/2} \E[D^\ph S_{t-s} \phi(X(s))D_{h^\ph} X^\ph (s)
\ek(s)]ds,
\end{split}
\end{equation}
hence,
\begin{equation} \label{e:BD2}
\begin{split}
\int_{0}^{t/2} \E[D_{h^\ph} S_{t-s} \phi(X(s))\ek(s)]ds
&=\E[\phi(X(t)) \ek(t) \int_{0}^{t/2} \Ll (Q^{\ph})^{-1}D_{h^\ph}X^\ph(s),
dW^\ph_s \Rr]\\
& \ \ \ +\int_{0}^{t/2} \E[D^\pl S_{t-s} \phi(X(s))D_{h^\ph} X^\pl(s)
\ek(s)]ds.
\end{split}
\end{equation}
By the fact $S_t \phi(x)=\E[S_{t-s} \phi(X(s)) \ek(s)]$, \eqref{e:BD}
and \eqref{e:BD2}, we have
\begin{equation*} 
\begin{split}
D_{h^\ph}S_t \phi(x)&=\frac 2t \int_{0}^{t/2} D_{h^\ph}
\E[S_{t-s} \phi(X(s))\ek(s)]ds \\
&=
\frac 2t \E[\phi(X(t)) \ek(t) \int_{0}^{t/2} \Ll (Q^{\ph})^{-1}D_{h^\ph}X^\ph(s),
dW^\ph_s \Rr ] \\
& \ \ \ \ \ \ +\frac 2t \int_{0}^{t/2} \E[D^\pl S_{t-s} \phi(X(s))D_{h^\ph}
X^\pl(s) \ek(s)]ds \\
&\ \ \ \ \ \ \ -\frac {4K}{t} \int_{0}^{t/2} \E[S_{t-s} \phi(X(s))\ek(s)
\int_0^s \Ll AX(r),AD_{h^\ph} X(r) \Rr dr]ds \\
&\ =\frac 2tI_1+\frac 2t I_2-\frac{4K}t I_3.
\end{split}
\end{equation*}

We now fix $T, \gamma, k, r$ and let $C$ be constants depending on $T, \gamma, k$ and $r$
(whose values can vary from line to line), then $I_1,I_2$ and $I_3$ above
can be estimated as follows:
\begin{equation*}
\begin{split}
|I_1| & \leq ||\phi||_k \E \left[\mathcal{E}_{K/2}(t) (1+|AX(t)|^k)
\mathcal{E}_{K/2}(t) \int_{0}^{t/2} \Ll (Q^{\ph})^{-1}D_{h^\ph}X^\ph(s),
dW^\ph_s \Rr \right] \\
& \leq ||\phi||_{k} \E\left(\sup_{0 \leq s \leq T}(1+|AX(s)|^k)^2 \mcl
E_{K}(s)\right)^{\frac 12} \E \left(|\mathcal E_{\frac K2}(t)
\int_{0}^{t/2} \Ll (Q^{\ph})^{-1}D_{h^\ph}X^\ph(s),
dW^\ph_s \Rr|^2 \right)^{\frac 12} \\
& \leq C ||\phi||_{k} (1+|Ax|^k) \left[\E \left(\int_{0}^{t/2}
|A^rD_{h^\ph}X^\ph(s)|^2 \ek(s)ds \right) \right]^{1/2} \\
& \leq C t^{1/2-(r-\gamma)}||\phi||_k (1+|Ax|^k)|A^\gamma h|
\end{split}
\end{equation*}
where the last two inequalities are by \eqref{e:XmEmKEst}, \eqref{e:EkMat} and \eqref{e:ArDhX} in order.
By \eqref{e:DhHXL} and \eqref{e:XmEmKEst},
\begin{equation*}
\begin{split}
|I_2| & \leq \frac {C}{K} \int_{0}^{t/2}||A^{-\gamma}D^\pl S_{t-s} \phi||_{k}
\E \left [(1+|AX(s)|^k) \mathcal{E}_{K/2}(s) \right]ds |A^\gamma h| \\
& \leq \frac {C}{K}\int_{0}^{t/2} ||A^{-\gamma}DS_{t-s} \phi||_{k} ds (1+|Ax|^k)|A^\gamma h|.
\end{split}
\end{equation*}
By Markov property of $X(t)$ and \eqref{e:XmEmKEst}, we have
 \begin{equation*}
\begin{split}
|I_3| &=\int_{0}^{t/2} \E \left\{\E[\phi(X(t)) e^{-K \int_s^t |AX(r)|^2dr}|\mathcal{F}_s]
\ek(s)\int_0^s \Ll AX(r),AD_{h^\ph} X(r) \Rr dr \right \}ds \\
& \leq C ||\phi||_k \int_0^t \E[(1+|AX(s)|^k) \mcl E_{K/2}(s)
\int_0^s \mathcal{E}_{K/2} (r) |AX(r)| \cdot |AD_{h^\ph} X(r)|dr]ds,
\end{split}
\end{equation*}
moreover, and by H$\ddot{o}$lder inequality, Poincare inequality $|A^{\gamma+\frac 12} x| \geq |Ax|$, \eqref{e:IntEmk} and \eqref{e:DhX},
\begin{equation*}
\begin{split}
& \ \ \ \int_0^s \mathcal{E}_{K/2} (r) |AX(r)| \cdot |AD_{h^\ph} X(r)|dr \\
& \leq (\int_0^s \mathcal{E}_{K/2} (r) |AX(r)|^2 dr)^{\frac 12} (\int_0^s |AD_{h^\ph} X(r)|^2 \mathcal{E}_{K/2} (r) dr)^{\frac 12} \\
& \leq
[\int_0^s \mathcal{E}_{K/2}(r) |A^{\frac 12+\gamma} D_{h^\ph}
X(r)|^2dr]^{\frac12} \leq  |A^\gamma h|;
\end{split}
\end{equation*}
hence, by \eqref{e:XmEmKEst} and the above,
\begin{equation*}
|I_3| \leq Ct ||\phi||_k (1+|Ax|^k) |A^\gamma h|.
\end{equation*}
Collecting the estimates for $I_1$, $I_2$ and $I_3$, we have
\begin{equation} \label{e:HHL}
|D_{h^\ph}S_t \phi(x)| \leq
C \left\{(t^{-\frac 12-(r-\gamma)}+K)||\phi||_k+\frac{1}{Kt} \int_{0}^{t/2}
||A^{-\gamma}DS_{t-s} \phi||_{k} ds \right\} (1+|Ax|^k) |A^\gamma h|
\end{equation}
 For the low frequency part, according to Lemma
\ref{lem:LowDerivative}, we have
\begin{equation} \label{e:LLH}
\begin{split}
|D_{h^\pl}S_t \phi(x)|&=|D_{h^\pl}S_{t/2}(S_{t/2}\phi)(x)| \\
& \leq |\E[D^\ph S_{t/2}\phi(X(t/2)) D_{h^\pl} X^{\ph}(t/2)\ek(t/2)]| \\
& \ \ +|\E[D^\pl S_{t/2}\phi(X(t/2)) D_{h^\pl} X^\pl (t/2)\ek(t/2)]| \\
& \ \ +\E[|S_{t/2}\phi(X(t/2))|\ek(t/2) K \int_0^{t/2} |AX(s)| |AD_{h^\pl} X(s)|ds] \\
& \leq C \left\{\frac{1}{K} ||A^{-\gamma}DS_{t/2}\phi||_k+t^{-p}
e^{Ct} ||\phi||_{k}+K||\phi||_k\right\} (1+|Ax|^k)|A^\gamma h|
\end{split}
\end{equation}
where the last inequality is due to \eqref{e:DhLXH}, \eqref{e:XmEmKEst} and \eqref{e:LowDerivative},
and to the following estimate (which is obtained by the same argument as in estimating $I_3$):
\begin{equation*}
\begin{split}
\E[S_{t/2}\phi(X(t/2))\ek(t/2)  \int_0^{t/2} |AX(s)| |AD_{h^\pl} X(s)|ds]  \leq C||\phi||_k (1+|Ax|^k) |A^{\gamma} h|
\end{split}
\end{equation*}
\ \\

Denote $\alpha=p+\frac 12+r-\gamma$ and
$$\phi_{T}=\sup_{0 \leq t \leq T} t^{\alpha} ||A^{-\gamma}D S_t \phi||_k,$$
 by \eqref{e:DhX} and
the similar argument as estimating $I_3$, we have
\begin{equation*}
\begin{split}
|D_hS_t \phi(x)| &=|D_h \E[\phi(X(t)) \ek(t)]| \\
& \leq \E\left[|A^{-\gamma} D \phi(X(t))| \mcl E_{\frac K2} (t)|A^\gamma D_hX(t)| \mcl E_{\frac K2} (t) \right]
\\
& \ \ +2K\E \left[|\phi(X(t))| \mcl E_{\frac K2} (t)  \mcl E_{\frac K2} (t)\int_0^t |AX(s)||AD_hX(s)|ds \right] \\
& \leq C(T,K, \gamma, k) (||A^{-\gamma}
D\phi||_k+||\phi||_k)(1+|Ax|^k) |A^\gamma h|,
\end{split}
\end{equation*}
which implies $||A^{-\gamma} D S_t \phi||_k \leq C(T,K,
\gamma, k)(||A^{-\gamma}D \phi||_k+||\phi||_k)$, thus $\phi_T<\infty$.
\ \\

Combine (\ref{e:HHL}) and (\ref{e:LLH}), we have for every $t \in
[0,T]$
\begin{equation*}
\begin{split}
& \ \ t^\alpha||A^{-\gamma} D S_t \phi||_k \\
&\leq  C t^p ||\phi||_k+\frac {C}{K}t^{\alpha-1}
 \int_{0}^{t/2}(t-s)^{-\alpha}(t-s)^{\alpha}
||A^{-\gamma}DS_{t-s} \phi||_k ds \\
& \ \ +CKt^{\alpha}||\phi||_k+\frac{C}{K} t^{\alpha}||A^{-\gamma} DS_{t/2}\phi||_k+C t^{\alpha-p}
e^{Ct} ||\phi||_k  \\
& \leq C t^p ||\phi||_k+\phi_T \frac {C}{K}t^{\alpha-1}
 \int_{0}^{t/2}(t-s)^{-\alpha} ds  \\
& \ \ +CKt^{\alpha}||\phi||_k+\phi_T\frac{C}{K}+C
t^{\alpha-p}e^{Ct} ||\phi||_k \\
& \leq \phi_{T} \frac{C}{K}+KCe^{CT} ||\phi||_k,
\end{split}
\end{equation*}
this easily implies
$$\phi_T \leq \phi_T \frac{C}{K}+KCe^{CT} ||\phi||_k.$$
As $K>0$ is sufficiently large, we have for all $t \in [0,T]$
\begin{equation*}
t^{\alpha} ||A^{-\gamma}D S_t \phi||_k \leq \frac
K{1-C/K} C e^{CT}||\phi||_k,
\end{equation*}
from which we conclude the proof.
\end{proof}
\section{Malliavin Calculus} \label{s:MalCal}
\subsection{Some preliminary for Malliavin calculus}
Given a $v \in L^2_{loc}({\R}^{+}, \pi_m H)$, the Malliavin
derivative of $X_m(t)$ in direction $v$, denoted as $\mathcal{D}_v
X_m(t)$, is defined by
\begin{equation*}
\mathcal{D}_v X_m(t)=\lim \limits_{\epsilon \rightarrow 0}
\frac{X_m(t,W+\epsilon V)-X_m(t,W)}{\epsilon}
\end{equation*}
where $V(t)=\int_0^t v(s) ds$, provided the above limit exists. $v$ can be random and is adapted
with respect to the filtration generated by $W$.

Recall $\pi^{\pl}H=\pi_n H$ and
$Z_{\pl}(n)=[-n,n]^3 \setminus (0,0,0)$ with
$n_0<n<m$ to be determined in Proposition \ref{prop:modified Hormander}.
The Malliavin
derivatives on the low and high frequency parts of $X_m(t)$, denoted by
$\mathcal{D}_v X_m^{\pl}(t)$ and $\mathcal{D}_v X_m^{\ph}(t)$, can be
defined in a similar way as above. Moreover, $\mathcal{D}_v X_m^{\pl}(t)$ and
$\mathcal{D}_v X_m^{\ph}(t)$ satisfy the following two SPDEs
respectively:
\begin{equation} \label{e:low-Malliavin-derivative}
\begin{split}
\p_t \mathcal{D}_{v}X_m^{\pl}&+A \mathcal{D}_{v}X_m^{\pl}+\tilde
B_m^{\pl}(\mathcal{D}_{v}X_m^{\pl},X_m)+\tilde
B_m^{\pl}(\mathcal{D}_{v}X_m^{\ph},X_m)=Q_m^{\pl} v^\pl
\end{split}
\end{equation}
with $\mathcal{D}_{v}X_m^{\pl}(0)=0$, and
\begin{equation} \label{e:high-Malliavin-derivative}
\begin{split}
 \p_t \mathcal{D}_{v}X_m^{\ph} +A
\mathcal{D}_{v}X_m^{\ph}+\tilde B_m^{\ph}(\mathcal{D}_{v}X_m^{\pl},X_m)+
\tilde B_m^{\ph}(\mathcal{D}_{v}X_m^{\ph},X_m) =Q_m^{\ph} v^\ph
\end{split}
\end{equation}
with $\mathcal{D}_{v}X_m^{\ph}(0)=0$, where $\tilde B(u,v)=B(u,v)+B(v,u)$. Moreover, we define a flow between $s$ and $t$ by $J^m_{s,t} \ (s \leq t)$, where $J^m_{s,t} \in \mathcal{L}(\pi^\pl H,\pi^\pl H)$ satisfies the following equation: $\forall \ h \in \pi^\pl H$
\begin{equation} \label{e:Jt}
\p_t J^m_{s,t}h+A J^m_{s,t}h+\tilde B_m^{\pl}(J^m_{s,t} h,X_m(t))=0
\end{equation}
with $J^m_{s,s}=Id \in \mathcal{L}(\pi^\pl H,\pi^\pl H)$. It is easy to see that
the inverse $(J^m_{s,t})^{-1}$ exists and satisfies
\begin{equation} \label{e:J-1t}
\p_t (J^m_{s,t})^{-1}h-(J^m_{s,t})^{-1}[A h+\tilde B_m^{\pl}(h,
X_m(t))]=0.
\end{equation}
Simply writing $J^m_t=J^m_{0,t}$, clearly, $J^m_{s,t}=J^m_t (J^m_s)^{-1}.$
\ \\

We shall follow the ideas in section 6.1 of \cite{EH01} to develop a Malliavin calculus for $X_m$,
one of the key points for this approach is to find an adapted process $v \in L_{loc}({\R}^{+};\pi_m H)$  such that
\begin{equation} \label{e:v direction}
 Q_m^{\ph} v^\ph (t)=\tilde B_m^{\ph}(\mathcal{D}_{v}X_m^{\pl}(t),X_m(t)),
 \end{equation}
which, combining with \eqref{e:high-Malliavin-derivative}, implies $\mathcal{D}_v X_m^{\ph}(t)=0$ for all $t>0$. More precisely,
\begin{prop} \label{prop:v-direction} 
There exists some  $v \in L^2_{loc}({\R}^{+}; \pi_m H)$
satisfying (\ref{e:v direction}), and
\begin{equation*}
\mathcal{D}_{v} X_m^{\pl}(t)=J^m_t\int_0^t (J^m_{s})^{-1} Q_m^{\pl}
v^\pl(s) ds, \ \ \ \ \mathcal{D}_{v} X_m^{\ph}(t)=0.
\end{equation*}
\end{prop}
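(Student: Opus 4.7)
The plan is to construct $v$ explicitly rather than via a fixed-point argument, exploiting the fact that Assumption \ref{a:Q}(A3) makes $Q_m^{\ph}$ invertible on the high-frequency subspace $\pi^{\ph}\pi_m H$. The key observation is that if one can arrange $\mathcal{D}_v X_m^{\ph}\equiv 0$, then the system \eqref{e:low-Malliavin-derivative}--\eqref{e:high-Malliavin-derivative} decouples and the low part admits the claimed representation via the flow $J^m_t$.

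First I would pick any adapted $v^{\pl}\in L^2_{\loc}(\R^{+};\pi^{\pl}H)$ (the proposition only asserts existence, so any convenient choice works) and define the candidate low-frequency Malliavin derivative
\[
Y(t):=J^m_t\int_0^t (J^m_s)^{-1}Q_m^{\pl}v^{\pl}(s)\,ds,
\]
which is a continuous $\pi^{\pl}H$-valued adapted process by \eqref{e:Jt}. Then I would \emph{define} the high-frequency part of the control by
\[
v^{\ph}(t):=(Q_m^{\ph})^{-1}\tilde B_m^{\ph}(Y(t),X_m(t)),
\]
which makes sense because $Q_m^{\ph}$ is invertible on the finite-dimensional range $\pi^{\ph}\pi_m H\subset (Id-\pi_{n_0})H$ by (A3), and lies in $L^2_{\loc}$ almost surely since $Y$ and $X_m$ have continuous paths and $\tilde B_m^{\ph}$ is bilinear. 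Setting $v:=v^{\pl}+v^{\ph}$ yields the candidate control.

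The second step is to verify that this choice actually produces the claimed Malliavin derivatives. I would insert the pair $(Y,0)$ into the coupled system \eqref{e:low-Malliavin-derivative}--\eqref{e:high-Malliavin-derivative} with zero initial data: the high-frequency equation collapses to $Q_m^{\ph}v^{\ph}=\tilde B_m^{\ph}(Y,X_m)$, which is exactly \eqref{e:v direction} and holds by construction of $v^{\ph}$; the low-frequency equation collapses to
\[
\p_t Y+AY+\tilde B_m^{\pl}(Y,X_m)=Q_m^{\pl}v^{\pl},
\]
which follows by differentiating the definition of $Y$ and using \eqref{e:Jt}. Since we are in the finite-dimensional Galerkin setting, the linear coupled system in $(\mathcal{D}_v X_m^{\pl},\mathcal{D}_v X_m^{\ph})$ with vanishing initial data has a unique solution, so necessarily $\mathcal{D}_v X_m^{\pl}=Y$ and $\mathcal{D}_v X_m^{\ph}=0$, yielding the stated formulas.

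The main obstacle I anticipate is not the construction itself, which is essentially algebraic, but confirming that what looks like a circular definition is really not circular: $v^{\ph}$ appears to depend on $\mathcal{D}_v X_m^{\pl}$, which in turn appears to depend on all of $v$. The resolution, which drives the whole scheme, is that setting the high-frequency Malliavin derivative to zero decouples the low equation from $v^{\ph}$, making $Y$ a function of $v^{\pl}$ alone and $v^{\ph}$ then a direct consequence. The quantitative control of $\|(Q_m^{\ph})^{-1}\|$, $\|J^m_t\|$, and $X_m$ needed for the later estimates is a separate issue tied to Assumption \ref{a:Q}(A3); for the present existence statement only the invertibility of $Q_m^{\ph}$ on the finite-dimensional range is used.
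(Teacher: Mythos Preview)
Your proposal is correct and follows essentially the same approach as the paper: pick $v^{\pl}$ freely, solve the low-frequency variation equation to get $Y$, then use the invertibility of $Q_m^{\ph}$ from (A3) to define $v^{\ph}$ so that the high-frequency equation becomes homogeneous and hence $\mathcal{D}_v X_m^{\ph}\equiv 0$. Your treatment is in fact slightly more careful than the paper's, since you explicitly address the apparent circularity by constructing $Y$ first and then invoking uniqueness of the linear system, whereas the paper presents the same argument as a consistency check.
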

\begin{proof}
When $\mathcal{D}_{v}X_m^{\ph}(t)=0$ for all $t \geq 0$, the
equation (\ref{e:low-Malliavin-derivative}) is simplified to be
\begin{equation*} \label{e:vL}
\p_t \mathcal{D}_{v}X_m^{\pl}+[A
\mathcal{D}_{v}X_m^{\pl}+\tilde B_m^{\pl}(\mathcal{D}_{v}X_m^{\pl},X_m)]=Q_m^{\pl}
v^\pl
\end{equation*}
with $\mathcal{D}_{v}X_m^{\pl}(0)=0$, which is solved by
\begin{equation} \label{e:DvPhiL}
\mathcal{D}_{v} X_m^{\pl}(t)=\int_0^t J^m_{s,t} Q_m^{\pl} v^\pl(s)
ds=J^m_t\int_0^t (J^{m}_{s})^{-1} Q_m^{\pl} v^\pl(s) ds.
\end{equation}
Due to (A3) of Assumption \ref{a:Q}, there exists some $v \in L^2_{loc}({\R}^{+}, \pi_m H)$
so that $v^\ph$ satisfies (\ref{e:v direction}),
therefore, (\ref{e:high-Malliavin-derivative}) is a homogeneous
linear equation and has a unique solution
$\mathcal{D}_{v}X_m^{\ph}(t)=0, \ \ \forall \ t>0.$
\end{proof}
With the previous lemma, we see that the Malliavin derivative is essentially restricted in
\emph{low} frequency part. Take
$$N:=2[(2n+1)^3-1]$$
vectors $v_1,\ldots,v_{N}
\in L^2_{loc}({\R}^+;\pi_m H)$ with each satisfying Proposition
\ref{prop:v-direction} ($N$ is the dimension of $\pi^l H$). Denote
\begin{equation} \label{e:Malliavin-direction}
v=[v_1, \ldots, v_{N}],
\end{equation}
we have
\begin{equation} \label{e:DvMHL}
\mathcal{D}_{v} X_m^{\ph}=0, \ \
\mathcal{D}_{v} X_m^{\pl}(t)=J^m_t \int_0^t (J^{m}_s)^{-1} Q_m^{\pl} v^\pl (s) ds,
\end{equation}
where $Q_m^{\pl}$ is defined in \eqref{e:QlQhRep}. In particular,
$\mathcal{D}_{v} X_m^{\pl}(t)$ is an $N \times N$ matrix.
 Choose
\begin{equation*}
v^\pl (s)=[(J^{m}_s)^{-1} Q_m^{\pl}]^{*}
\end{equation*}
and denote
\begin{equation}
\mathcal{M}^m_t=\int_0^t [(J^{m}_s)^{-1} Q_m^{\pl}][(J^{m}_s)^{-1} Q_m^{\pl}]^{*} ds,
\end{equation}
$\mathcal{M}^m_t$ is called \emph{Malliavin matrix}, and is clearly
a symmetric operator in $\mathcal{L}(\pi^\pl H, \pi^\pl H)$.
$\forall \ \eta \in \pi^\pl H$, we have by  Parseval's identity
\begin{equation} \label{e:respresentation of Malliavin}
\begin{split}
\ \Ll \mathcal{M}_t \eta, \eta \Rr &=\int_0^t \Ll [(J^{m}_s)^{-1}
Q_m^{\pl}]^{*} \eta, [(J^{m}_s)^{-1}
Q_m^{\pl}]^{*} \eta \Rr ds \\
&=\sum \limits_{k \in Z_\pl(n)} \sum
\limits_{i=1}^2 \int_0^t |\Ll (J^{m}_s)^{-1} Q_m^{\pl} e^i_k, \eta \Rr|^2ds \\
&=\sum \limits_{k  \in Z_\pl(n) \setminus Z_\pl(n_0)}
\sum \limits_{i=1}^2 \int_0^t |\Ll (J^{m}_s)^{-1} q^i_ke_k, \eta
\Rr|^2ds
\end{split}
\end{equation}
where $q^i_k$ is the i-th column vector of the $2 \times 2$ matrix $q_k$ (recall \eqref{e:QRep}).  \\

The following lemma is crucial for proving Lemma \ref{lem:LowDerivative} and
will be proven in Appendix \ref{app:TecLem}.
\begin{lem} \label{l:JacEst}
1. For any $h \in \pi^\pl H$, we have
\begin{equation} \label{e:JacEst1}
|J^m_{t}h|^2 \ekm (t) \leq |h|^2,
\end{equation}
\begin{equation} \label{e:DhXpl}
|D_{h} X^\pl(t)|^2 \ekm (t) \leq |h|^2,
\end{equation}
\begin{equation} \label{e:J-1}
|(J^{m}_t)^{-1} h|^2 \ekm(t) \leq Ce^{Ct}|h|^2
\end{equation}
\begin{equation}   \label{e:JacEst2}
\left |\ekm(t) (J^{m}_{t})^{-1}-Id \right|_{\mathcal{L}(H)}\leq t^{1/2}Ce^{Ct}
\end{equation}
\begin{equation} \label{e:JmQlEmk}
\E \left(\int_0^t |[(J_s^{m})^{-1} Q_m^\pl]^{*} h|^2 \ekm(s)ds\right)
\leq t e^{Ct} tr[Q_m^{\pl} (Q_m^{\pl})^{*}] |h|.
\end{equation}
where the above $C=C(n)>0$ can vary from line to line and the $n$ is the size of $\pi^\pl H$ defined in \eqref{e:LowHigH}.  \\

2. Suppose that $v_1,v_2$ satisfy Proposition \ref{prop:v-direction}
and $h \in \pi^\pl H$, we have
\begin{equation} \label{e:Mal1OrdEst}
|A \mathcal{D}_{v_1}
X_m^{\pl}(t)|^2 \ekm(t) \leq C \int_0^t e^{1/2(t-s)}
|v^\pl_1(s)|^2 \mathcal{E}_{m,K}(s) ds
\end{equation}
\begin{equation} \label{e:Mix2OrdEst}
|\mathcal{D}_{v_1}D_h
X_m^{\pl}(t)|^2 \ekm(t) \leq  Ce^{Ct}
|h|^{2} \left(\int_0^t
|v^\pl_1(s)|^{2}\mathcal{E}_{m,\frac K2}(s) ds \right)
\end{equation}
\begin{equation} \label{e:Mal2OrdEst}
|\mathcal{D}^2_{v_1 v_2}
X_m^{\pl}(t)|^2 \ekm(t) \leq  Ce^{C t} \left(\int_0^t |v^\pl_1(s)|^2
\mathcal{E}_{m,\frac K2}(s)ds \right)
\left(\int_0^t
|v^\pl_2(s)|^{2} \mathcal{E}_{m,\frac K2}(s)ds \right)
\end{equation}
where the above $C=C(n)>0$ can vary from line to line and the $n$ is the size of $\pi^\pl H$ defined in \eqref{e:LowHigH}.
\end{lem}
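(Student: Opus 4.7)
The plan is to derive all the estimates by energy-type It\^o computations on the corresponding linearised equations, exploiting in each case the finite dimensionality of $\pi^\pl H$ and the decay provided by the Feynman--Kac weight $\ekm(t)$.

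For \eqref{e:JacEst1} I would differentiate $|J^m_t h|^2$ in time using \eqref{e:Jt}, obtaining
\[
\frac{d}{dt}|J^m_t h|^2 = -2|A^{1/2}J^m_t h|^2 - 2\Ll \tilde B^\pl_m(J^m_t h, X_m), J^m_t h\Rr.
\]
Since $J^m_t h \in \pi^\pl H$ is low-frequency, all its Sobolev norms are equivalent up to a constant $C_n$; combined with the antisymmetry identities $\Ll B(X,u),u\Rr=0$ and $\Ll B(u,X),u\Rr=-\Ll B(u,u),X\Rr$, the nonlinearity is bounded by $C_n|J^m_t h|^2|AX_m|$. Multiplying by $\ekm(t)$, using $\frac{d}{dt}\ekm(t)=-K|AX_m|^2\ekm(t)$ and absorbing the cross term via Young's inequality $2C_n|AX_m|\leq 4C_n^2/K+(K/2)|AX_m|^2$, one obtains $\frac{d}{dt}[|J^m_t h|^2\ekm(t)]\leq 0$ provided $K$ is large enough, which gives \eqref{e:JacEst1}. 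The companion bound \eqref{e:DhXpl} follows from the same energy computation applied to the Fr\'echet derivative equation $\partial_t D_h X_m + A D_h X_m + \tilde B_m(D_h X_m, X_m)=0$, using $\Ll B(X,D_h X_m),D_h X_m\Rr = 0$ and the fact that $h\in\pi^\pl H$ is a low-frequency initial condition. For \eqref{e:J-1} I run the analogous estimate on the inverse flow equation \eqref{e:J-1t}; the opposite sign in the nonlinearity forces a Gronwall-type argument that produces the exponential factor $e^{Ct}$, which is exactly what the statement allows.

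To obtain \eqref{e:JacEst2} I would write
\[
\ekm(t)(J^m_t)^{-1}h - h = \int_0^t \frac{d}{ds}\bigl[\ekm(s)(J^m_s)^{-1}h\bigr]\,ds,
\]
differentiate the integrand using \eqref{e:J-1t}, estimate it in $L^2_s$ by combining \eqref{e:J-1} with the low-frequency norm equivalence, and conclude by Cauchy--Schwarz to produce the $t^{1/2}$ factor. For \eqref{e:JmQlEmk} I would expand in the orthonormal basis $\{e^i_k\}_{k\in Z_\pl(n)\setminus Z_\pl(n_0),\,i=1,2}$ of the range of $Q^\pl_m$, apply \eqref{e:J-1} to each basis direction, and exchange the sum with the time integral to produce $\tr[Q^\pl_m(Q^\pl_m)^*]$.

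For Part 2 I would start from the representation $\mathcal{D}_{v_1}X^\pl_m(t) = J^m_t\int_0^t (J^m_s)^{-1}Q^\pl_m v^\pl_1(s)\,ds$ furnished by Proposition \ref{prop:v-direction}. The bound \eqref{e:Mal1OrdEst} follows after applying $A$ by using $|AJ^m_t h|\leq C_n|J^m_t h|$ (low frequency), combining \eqref{e:JacEst1} and \eqref{e:J-1}, and splitting $\ekm(t) = \mathcal{E}_{m,K/2}(t)\cdot \mathcal{E}_{m,K/2}(t)$ to separate the two flow factors. The mixed second-order bound \eqref{e:Mix2OrdEst} is obtained by differentiating the representation formula in the direction $h$: this produces $D_h J^m_t$ and $D_h[(J^m_s)^{-1}]$, which solve additional variational equations driven by $\tilde B^\pl_m(\cdot, X_m)$ and satisfy analogous energy inequalities; finally \eqref{e:Mal2OrdEst} is obtained by differentiating once more along $v_2$ and iterating the same type of bound. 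The main obstacle throughout is keeping track of how the weight $\ekm(t)$ distributes under repeated differentiation: at each order one must split $\ekm(t)$ into several factors $\mathcal{E}_{m,K/c}(t)$ so that one factor controls the linearised flow while another remains available for the next variational increment --- which is conceptually why the large coefficient $K$ is indispensable, as emphasised after \eqref{e:ekm}, since it supplies enough negative potential to dominate both the dynamics and the cascade of differentiated flows appearing in the variational equations.
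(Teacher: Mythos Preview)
Your overall strategy is correct and coincides with the paper's: energy/It\^o computations on the linearised equations, exploiting the finite dimensionality of $\pi^\pl H$ and the damping from $\ekm(t)$.

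There is one genuine slip in your treatment of \eqref{e:DhXpl}. You argue by running the $L^2$ energy identity on the full derivative $D_h X_m$ and invoking low-frequency norm equivalence because ``$h\in\pi^\pl H$ is a low-frequency initial condition''. But $D_h X_m(t)$ does \emph{not} remain in $\pi^\pl H$ for $t>0$: the nonlinearity immediately transfers mass to high modes, so you cannot bound $\Ll B(D_h X_m,X_m),D_h X_m\Rr$ by $C_n|D_h X_m|^2|AX_m|$ as you propose. The paper instead appeals to the method of \eqref{e:DhX}, i.e.\ the bilinear estimate $|A^{\gamma-1/2}B(u,u)|\le C|A^\gamma u|^2$ (valid for $\gamma>1/4$, $\gamma\ne 3/4$), which yields $|A^\gamma D_h X_m(t)|^2\ekm(t)\le |A^\gamma h|^2$ for the full derivative without any low-frequency assumption; one then uses Poincar\'e and $|A^\gamma h|\le C_n|h|$ (here the low-frequency hypothesis on $h$ is finally used) to descend to the $L^2$ norm of the projection. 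Your antisymmetry-plus-norm-equivalence argument is fine for \eqref{e:JacEst1}, where $J^m_t h$ genuinely stays in $\pi^\pl H$, but it does not transfer to \eqref{e:DhXpl}.

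For Part~2 there is a methodological difference worth recording. You propose to work from the Duhamel representation $\mathcal D_{v_1}X^\pl_m(t)=J^m_t\int_0^t(J^m_s)^{-1}Q^\pl_m v^\pl_1(s)\,ds$ and differentiate it in $h$ (resp.\ in $v_2$), then bound the resulting variations of $J^m_t$ and $(J^m_s)^{-1}$ by further energy estimates. The paper takes a more direct route: it writes the evolution equation satisfied by $A\mathcal D_{v}X^\pl_m$, by $\mathcal D_v D_h X^\pl_m$, and by $\mathcal D^2_{v_1v_2}X^\pl_m$ themselves and runs a single energy estimate on each, using \eqref{e:NonLinEst} to control the bilinear terms and Gronwall to close. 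Both approaches are valid; the paper's avoids the bookkeeping of differentiating the flow factors and of repeatedly splitting $\ekm(t)=\mathcal E_{m,K/2}(t)^2$, while yours makes the dependence on the representation formula explicit. Either way the large $K$ enters for the same reason you identify.
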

\subsection{H\"{o}rmander's systems and proof of Lemma \ref{lem:LowDerivative}}   \label{sub:LowDerivative} 
We consider the SPDE
about $X_m^{\pl}$ in Stratanovich form as
\begin{equation} \label{e:LowFreqEqn}
dX_m^{\pl}+[A_m^{\pl} X_m^{\pl}+B_m^{\pl}(X)]dt=\sum \limits_{k \in Z_\pl (n)\setminus Z_\pl(n_0)} \sum
\limits_{i=1}^2 q^i_k \circ dw^i_k(t)e_k
\end{equation}
where $A^\pl$ is the Stokes operator restricted on $\pi^\pl H$ and $q_k^i$
is the $i$-th column vector in the $2 \times 2$ matrix $q_k$ (under the orthonormal basis ($e_{k,1}$,$e_{k,2}$) of $k^{\bot}$). Given any two Banach
spaces $B_1$ and $B_2$, denote $P(B_1,B_2)$ the collections of
functions from $B_1$ to $B_2$ with polynomial growth.  We
introduce the Lie bracket on $\pi^\pl H$ as follows: $\forall \
K_1 \in P(\pi_m H, \pi^\pl H), \ K_2 \in
P(\pi_m H, \pi^\pl H)$, define $[K_1,K_2]$ by
$$[K_1,K_2](x)=DK_1(x)K_2(x)-DK_2(x)K_1(x) \ \ \ \forall \ x \in \pi_m H.$$
The brackets $[K_1,K_2]$ will appear when \emph{differentiating $J_t^{-1}K_1(X(t))$} in the proof of Lemma
\ref{lem:inverse}.
\begin{defn} \label{defn:Hormander systems}
The H\"{o}mander's system {\bf K} for equation \eqref{e:LowFreqEqn} is defined
as follows: given any $y \in \pi_m H$, define
\begin{align*}
&{\bf K_0}(y)=\{q^i_k e_k; k \in Z_\pl(n) \setminus Z_\pl(n_0), i=1,2\}\\
&{\bf K_1}(y)=\{[A^\pl_my+B_m^{\pl}(y,y),q^i_{k} e_k]; k \in Z_\pl (n) \setminus Z_\pl(n_0), i=1,2\} \\
&{\bf K_2}(y)=\{[q^i_{k}e_k,K(y)]; K \in {\bf K_1}(y), k \in Z_\pl(n) \setminus Z_\pl(n_0),
i=1,2\}
\end{align*}
and ${\bf K}(y)=span\{{\bf K_0}(y) \cup {\bf K_1}(y) \cup {\bf K_2}(y)\}$, where each $q^i_k$ is
the column vector defined in \eqref{e:QRep}.
\end{defn}
\begin{defn}
The system ${\bf K}$
satisfies the \emph{restricted H\"{o}rmander condition} if
there exist some $\delta>0$ such that for all $y \in \pi_m H$
\begin{equation} \label{con:Hormander}
\sup \limits_{K \in {\bf K}}|\Ll K(y),\ell \Rr| \geq \delta
|\ell|, \ \ \ \ \ell \in \pi^\pl H.
\end{equation}
\end{defn}
\ \\

The following lemma gives some inscription for the elements in ${\bf K_2}$ (see \eqref{e:K2Ins}) and plays the key role for the proof of Proposition \ref{prop:modified Hormander}.
\begin{lem} \label{l:MixSet}
 For each $k \in Z_\pl(n_0)$, define \emph{mixing set} $Y_k$ by
$$Y_k=\left \{\tilde B_{m,k} \left (q_{j} \ell_j e_j, q_{l} \ell_l e_l\right):
j,l \in Z_\ph (n_0); \ell_j \in j^{\bot}, \ell_l \in l^{\bot}\right \},$$
where $\tilde B_{m,k}(x,y)$ is the Fourier coefficient of $\tilde B_m(x,y)$ at the mode $k$.
For all $k \in Z_\pl(n_0)$, $span \{Y_k\}=k^{\bot}$.
\end{lem}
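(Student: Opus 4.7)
The inclusion $\mathrm{span}\{Y_k\} \subseteq k^\bot$ is automatic because the Leray projection $\mathcal{P}$ forces every Fourier coefficient $\tilde B_{m,k}(\cdot,\cdot)$ to live in $k^\bot$. Since $k^\bot$ is two-dimensional, the task reduces to exhibiting two linearly independent elements of $Y_k$. My plan is to use the explicit bilinear formula for $\tilde B_{m,k}(a e_j, b e_l)$ announced in Appendix \ref{sub:App1}. That formula takes the schematic form
\begin{equation*}
\tilde B_{m,k}(a e_j, b e_l) \;=\; c\,\bigl[(a\cdot l)\,\mathcal{P}_k b + (b\cdot j)\,\mathcal{P}_k a\bigr] \cdot \mathbf{1}_{\{j,l,k \text{ form a resonant triad}\}},
\end{equation*}
where $c \neq 0$ is a universal constant and the triadic condition is one of $\pm j \pm l = \pm k$, dictated by the product-to-sum identity for the $\cos/\sin$ pair $e_j, e_l$.

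\textbf{Choosing the triads.} Fix $k \in Z_\pl(n_0)$. Because $Z_\pl(n_0)$ is finite while the lattice $\Z^3_*$ is infinite, I can pick some $j \in \Znozero$ with $|j|$ arbitrarily large so that $l := k - j$ also satisfies $l \in Z_\ph(n_0)$ (take $|j|$ much larger than $n_0$). Then $j+l = k$, so the resonant condition holds. Moreover $a$ ranges freely over the two-dimensional space $j^\bot$ and $b$ over $l^\bot$ when $\ell_j, \ell_l$ vary (here I use Assumption (A3) which guarantees $\mathrm{rank}(q_j) = \mathrm{rank}(q_l) = 2$, so $q_j \ell_j$ hits all of $j^\bot$ as $\ell_j$ varies, and likewise for $l$).

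\textbf{Producing two independent vectors.} Consider the bilinear map $\Phi: j^\bot \times l^\bot \to k^\bot$ defined by $\Phi(a,b) = (a\cdot l)\mathcal{P}_k b + (b\cdot j)\mathcal{P}_k a$. I would argue as follows. First pick $a_1 \in j^\bot$ with $a_1 \cdot l \neq 0$ (possible since $l \notin j^\bot$ in general; if the unlucky case $l \in j$-direction occurs, perturb $j$) and pick $b_1 \in l^\bot$ with $\mathcal{P}_k b_1 \neq 0$ and $b_1 \perp j$. Then $\Phi(a_1, b_1) = (a_1\cdot l)\mathcal{P}_k b_1$ is a nonzero vector of $k^\bot$, giving one direction $v_1$. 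To obtain a second independent direction, either (i) swap the roles, choosing $b_2$ with $b_2 \cdot j \neq 0$ and $a_2 \perp l$, so that $\Phi(a_2,b_2) = (b_2\cdot j)\mathcal{P}_k a_2$ with $\mathcal{P}_k a_2$ not collinear to $\mathcal{P}_k b_1$; or (ii) switch to a second triad $j' + l' = k$ with a geometry chosen so that $\mathrm{span}\{\mathcal{P}_k(j^\bot), \mathcal{P}_k((j')^\bot)\} = k^\bot$.

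\textbf{The main obstacle.} The one genuine worry is a coplanarity accident: for some specific $k$ and some specific single triad, the entire image $\Phi(j^\bot \times l^\bot)$ could lie in a one-dimensional subspace of $k^\bot$. The remedy is precisely the freedom to vary the triad. Since there are infinitely many admissible triads $(j,l)$ with $j+l=k$ and $j,l \in Z_\ph(n_0)$, one of them must produce two independent $\mathcal{P}_k$-images; the geometric claim here is that $\bigcap_{j \in \Znozero, |j|>n_0} \ker(\mathcal{P}_k|_{j^\bot} \to k^\bot) = \{0\}$, which follows because the subspaces $j^\bot$ cover all directions in $\R^3$ as $j$ varies. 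Once two linearly independent vectors $v_1, v_2 \in Y_k$ are produced, $\mathrm{span}\{v_1,v_2\} = k^\bot$ by dimension count, concluding the proof.
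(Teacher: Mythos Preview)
Your overall strategy---identify the bilinear form $\Phi(a,b)=(a\cdot l)\mathcal P_k b+(b\cdot j)\mathcal P_k a$ coming from the explicit formula for $\tilde B_k$, use (A3) to guarantee $q_j,q_l$ are surjective onto $j^\bot,l^\bot$, and then exhibit two independent vectors in the image---matches the paper's. The gap is in your step ``Producing two independent vectors'': your construction (i) \emph{always} produces collinear vectors. Indeed, imposing $b_1\in l^\bot$ together with $b_1\perp j$ forces $b_1\parallel j\times l$, and since $(j\times l)\cdot k=(j\times l)\cdot(j+l)=0$ we get $\mathcal P_k b_1=b_1\parallel j\times l$, so $v_1\parallel j\times l$. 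Symmetrically, $a_2\in j^\bot$ with $a_2\perp l$ forces $a_2\parallel j\times l$, and again $\mathcal P_k a_2=a_2$, so $v_2\parallel j\times l$. Thus your two ``easy'' vectors are parallel, and the assertion ``$\mathcal P_k a_2$ not collinear to $\mathcal P_k b_1$'' is false. Your fallback (ii) of using a second triad could be made to work (choose $(j',l')$ with $j'\times l'\nparallel j\times l$), but you have not carried it out, and the final kernel-intersection argument you sketch does not directly establish what is needed.

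The paper avoids this by showing that a \emph{single} triad already suffices, provided one imposes the extra conditions $j\nparallel l$ and $|j|\neq|l|$. It introduces an orthogonal basis $\{k,h_1,h_2\}$ of $\R^3$ with $h_1$ an explicit vector in $k^\bot$ lying in the $j,l$-plane (and $h_2\parallel j\times l$), writes $p_j=ak+b_1h_1+b_2h_2$, $p_l=\alpha k+\beta_1h_1+\beta_2h_2$, and computes $\Phi(p_j,p_l)$ explicitly in these coordinates. The $h_2$-coefficient comes out as $\alpha b_2+\beta_2 a$ (your direction $j\times l$), while the $h_1$-coefficient is a nonzero multiple of $a\alpha$ with a factor $|j|^2-|l|^2$ (or $j\cdot k$ in the case $k\cdot l=0$). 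The condition $|j|\neq|l|$ is exactly what makes the $h_1$-direction attainable, and since $a,\alpha,b_2,\beta_2$ are free, the image is all of $k^\bot$. So where you try to dodge the computation and fall into a degeneracy, the paper does the computation and identifies the precise nondegeneracy condition ($|j|\neq|l|$) that makes one triad enough.
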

\begin{prop} \label{prop:modified Hormander}
${\bf K}$ in Definition
\ref{defn:Hormander systems} satisfies the restricted H\"{o}rmander
condition.
\end{prop}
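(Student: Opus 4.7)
The plan is to isolate a finite family of \emph{constant} (i.e., $y$-independent) vector fields inside $\mathbf{K_0}(y) \cup \mathbf{K_2}(y)$ that together span $\pi^\pl H$, which then yields the uniform bound \eqref{con:Hormander} by elementary linear algebra in a finite-dimensional space. The first step is a direct bracket calculation. Since each $q_k^i e_k$ is constant in $y$,
\[
[A^\pl_m y + B_m^\pl(y,y),\ q_k^i e_k] \;=\; A^\pl_m q_k^i e_k + \tilde B_m^\pl(y,\ q_k^i e_k) \in \mathbf{K_1}(y),
\]
which is affine in $y$. Bracketing this once more against the constant field $q_l^{i'} e_l$ kills the linear part (both $A^\pl_m q_k^i e_k$ and $q_l^{i'} e_l$ have vanishing $y$-derivative) and produces the constant vector field
\[
[q_l^{i'} e_l,\; A^\pl_m q_k^i e_k + \tilde B_m^\pl(y,\ q_k^i e_k)] \;=\; -\tilde B_m^\pl(q_l^{i'} e_l,\ q_k^i e_k) \in \mathbf{K_2}(y),
\]
for every $k, l \in Z_\pl(n) \setminus Z_\pl(n_0)$ and $i, i' \in \{1,2\}$.

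For the direction count, assumption (A3) gives $\mathrm{rank}(q_k) = 2$, so $\{q_k^1 e_k, q_k^2 e_k\} \subset \mathbf{K_0}(y)$ already spans the fiber $k^\bot$ for every $k \in Z_\pl(n) \setminus Z_\pl(n_0)$. To reach the remaining unforced fibers $\kappa^\bot$ with $\kappa \in Z_\pl(n_0)$, I project the constant $\mathbf{K_2}$ vectors above onto the $\kappa$-th Fourier mode, producing $-\tilde B_{m,\kappa}(q_l^{i'} e_l, q_k^i e_k)$, which lies in the mixing set $Y_\kappa$ of Lemma \ref{l:MixSet} (restricted to $k, l \in Z_\pl(n) \setminus Z_\pl(n_0) \subset Z_\ph(n_0)$). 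By bilinearity and Lemma \ref{l:MixSet}, $\mathrm{span}(Y_\kappa) = \kappa^\bot$, which is two-dimensional; hence only finitely many modes $k, l$ are needed to produce two linearly independent vectors spanning $\kappa^\bot$. Since $Z_\pl(n_0)$ is itself finite, the union of all such modes over $\kappa \in Z_\pl(n_0)$ is finite, and I fix $n$ large enough that all of them lie in $Z_\pl(n) \setminus Z_\pl(n_0)$. For this $n$, the finite constant family extracted from $\mathbf{K_0}\cup\mathbf{K_2}$ spans $\pi^\pl H = \bigoplus_{\kappa \in Z_\pl(n)} \kappa^\bot$.

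Since this spanning family is independent of $y$ and $\pi^\pl H$ is finite-dimensional, the function $\ell \mapsto \max_{K} |\langle K, \ell \rangle|$ over this finite list defines a norm on $\pi^\pl H$, necessarily equivalent to $|\cdot|$; this delivers a uniform $\delta > 0$ as required by \eqref{con:Hormander}. The main obstacle is the step that extracts a \emph{finite} spanning set from the a priori non-quantitative conclusion of Lemma \ref{l:MixSet}; this reduction is possible precisely because each $\kappa^\bot$ is two-dimensional, so a bounded number of witnesses per $\kappa$ suffices, keeping $n$ finite and the whole argument uniform in the Galerkin parameter $m$.
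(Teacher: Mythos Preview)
Your approach is the same as the paper's: compute the double bracket to land on the constant vector $-\tilde B_m^\pl(q_l^{i'}e_l,\,q_k^i e_k)$, invoke Lemma~\ref{l:MixSet} for the unforced fibers, and choose $n$ large enough. The uniform $\delta$ via norm equivalence on a finite-dimensional space is fine.

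There is, however, one genuine gap in the passage from ``the $\kappa$-projections span $\kappa^\bot$ for each $\kappa\in Z_\pl(n_0)$'' to ``the family spans $\pi^\pl H$.'' The bracket vector $-\tilde B_m^\pl(q_l^{i'}e_l,\,q_k^i e_k)$ is \emph{not} supported only at the mode $\kappa=k+l$; by the Fourier calculus \eqref{e:Bk1}--\eqref{e:BkZero} it also carries a component at $k-l$. If that second component could land in another unforced fiber $\kappa'\in Z_\pl(n_0)$, then knowing the projections span each fiber separately would not imply the full family spans (a block-upper-triangular argument fails without control of the off-diagonal block). The paper closes this by checking that whenever $k,l\in Z_\ph(n_0)$ with $k+l=\kappa\in Z_\pl(n_0)$, one automatically has $k-l\in Z_\ph(n_0)$; hence the ``contaminating'' component at $k-l$ sits in a forced fiber and is already in the span of $\mathbf{K_0}$, so modulo $\mathbf{K_0}$ each such $\mathbf{K_2}$-vector is supported on the single unforced mode $\kappa$. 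You need to add this observation, and correspondingly enlarge your choice of $n$ so that the modes $k-l$ (not only $k,l$) lie in $Z_\pl(n)$.
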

\begin{proof}
 It suffices to show that for each $k \in Z_\pl(n_0)$, the Lie brackets in Definition \ref{defn:Hormander systems} can produce at least two linearly independent vectors of $Y_k$ in Lemma \ref{l:MixSet}. (We note that \cite{R04} proved a similar proposition).

As $k \in Z_\pl(n_0) \cap {\Z}^3_{+}$, by Lemma \ref{l:MixSet}, $Y_k$  has at least
two linearly independent vectors $h^1_k \nparallel h^2_k$.
Without lose of generality,
assume $h^1_k=\tilde B_k(q^1_{j_k}e_{j_k},q^1_{l_k}e_{l_k})$
and $h^2_k=\tilde B_k(q^2_{j_k}e_{j_k},q^2_{l_k}e_{l_k})$ with
$j_k,-l_k \in Z_\ph(n_0)$ and $j_k+l_k=k$.
We can easily have (\emph{simply writing $j=j_k, l=l_k$})
\begin{equation} \label{e:K2Ins}
[q^i_je_j,[A^\pl y+B^\pl(y,
y),q^i_l e_l]]=-\tilde B^\pl(q^i_le_l,q^i_je_j),
\end{equation}
 and by \eqref{e:Bk1}-\eqref{e:BkZero},
$$[q^i_je_j,[A^\pl y+B^\pl(y,y),q^i_k e_k]]=-\frac{1}{2} \tilde B_{j-l}
(q^i_je_j,q^i_l e_l)-\frac12 \tilde B_k (q^i_je_j,q^i_l e_l).$$
Clearly, $j-l \in Z_\ph(n_0)$, by (A2) of Assumption \ref{a:Q},
$\tilde B_{j-l} (q^1_je_j,q^1_l e_l)$ and
$\tilde B_{j-l} (q^2_je_j,q^2_l e_l)$ must both be equal to
a linear combination of $q^i_{j-l}e_{j-l} \ (i=1,2)$.
Combining this observation with the fact $\tilde B_k(q^1_{j},q^1_{l}) \nparallel \tilde B_k(q^2_{j_k},q^2_{l_k})$, one immediately has that
$[q^i_je_j,[A^\pl y+B^\pl(y,y),q^i_l e_l]]$ $(i=1,2)$ and  $q^i_{j-l}e_{j-l}$ $(i=1,2)$
span $k^{\bot}$.

Similarly, we have the same conclusion for $k \in Z_\pl(n_0) \cap {\Z}^3_{-}$. Choose the $n$ in \eqref{e:LowHigH} sufficiently large so that $j_k, l_k, j_k+l_k,
j_k-l_k \in Z_\pl(n)$ for all $k \in Z_\pl(n_0)$.
\end{proof}
With Proposition \ref{prop:modified Hormander}, we can show the following key lemma (see the proof in Section \ref{ss:Proof of Lemma}).
\begin{lem} \label{lem:inverse}
Suppose that $X_m(t,x)$ is the solution to
equation \eqref{e:GalerkinM} with initial data $x \in \pi_m H$.
Then $\mathcal{M}^m_t$ is invertible almost surely. Denote $\lambda_{min}(t)$ the minimal eigenvalue of $\mathcal{M}^m_t$, then there exists some constant $q>0$ (possibly very large), for all $p>0$, we have a constant $C=C(p)>0$ such that
\begin{equation} \label{e:MinEigVal}
\mathbb P \left \{ \frac{1}{\lambda_{min}(t)} \geq \frac 1 {\e^q} \right\} \leq  \frac{C \e^p}{t^p}.
\end{equation}
\end{lem}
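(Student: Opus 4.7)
The plan is to combine the restricted Hörmander condition of Proposition \ref{prop:modified Hormander} with an iterated Norris–Mattingly–Pardoux lemma applied to the semimartingale $s \mapsto \Ll (J^m_s)^{-1} K(X_m(s)), \eta \Rr$ for $K$ running through the Lie brackets in $\mathbf{K}$. First I would reduce \eqref{e:MinEigVal} to a direction-wise estimate: since $\pi^\pl H$ is finite-dimensional of size $N$, a standard $\e$-net argument on its unit sphere, together with Lipschitz control of $\eta \mapsto \Ll \mathcal{M}^m_t \eta, \eta \Rr$, shows that it is enough to prove
\begin{equation*}
\PP\{\Ll \mathcal{M}^m_t \eta, \eta \Rr \leq \e\} \leq C \e^p / t^p
\end{equation*}
uniformly over unit $\eta \in \pi^\pl H$, with $p$ depending on $q$. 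The discretisation loss is polynomial in $\e^{-1}$ and is absorbed into the exponent by an affordable adjustment of $q$.

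On this event, the representation \eqref{e:respresentation of Malliavin} gives, for every $k \in Z_\pl(n) \setminus Z_\pl(n_0)$ and $i = 1,2$,
\begin{equation*}
\int_0^t \Ll (J^m_s)^{-1} q^i_k e_k, \eta \Rr^2 \, ds \leq \e,
\end{equation*}
so the process $\psi_K(s) := \Ll (J^m_s)^{-1} K(X_m(s)), \eta \Rr$ has small $L^2$-norm for each $K \in \mathbf{K_0}$. Next I would apply Itô's formula to $\psi_K$ using \eqref{e:J-1t} and the Stratonovich form \eqref{e:LowFreqEqn}; a direct calculation shows that the drift of $\psi_K$ is $\psi_{[A^\pl y + B^\pl(y,y), K]}$ evaluated at $y = X_m^\pl(s)$, and its martingale part is built from $\psi_{[q^i_k e_k, K]}$. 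This is precisely where the Lie brackets defining $\mathbf{K_1}$ and $\mathbf{K_2}$ enter.

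The analytic engine is the Norris lemma: if $Z_s = \int_0^s a_r \, dr + \int_0^s b_r \, dW_r$ is a continuous semimartingale whose coefficients have sufficient polynomial moments, then $\int_0^t Z_s^2 \, ds \leq \delta$ forces $\int_0^t a_r^2 \, dr + \int_0^t |b_r|^2 \, dr \leq \delta^\kappa$ off an exceptional set of probability at most $C \delta^p$. Starting from the $K \in \mathbf{K_0}$ case, one application propagates the smallness to all $K \in \mathbf{K_1}$, and a second iteration covers $K \in \mathbf{K_2}$. Multiplying through by $J^m_s$, which is controlled by \eqref{e:JacEst1}, and letting $s \to 0^+$, one obtains on the good set
\begin{equation*}
\sup_{K \in \mathbf{K}} |\Ll K(x), \eta \Rr| \leq C \e^\beta
\end{equation*}
for some $\beta > 0$. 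This contradicts the restricted Hörmander lower bound $\delta |\eta| = \delta$ of \eqref{con:Hormander} as soon as $\e$ is small, so the bad set has probability bounded by the sum of the exceptional probabilities from each Norris step, which is $O(\e^p / t^p)$ after choosing $q$ appropriately.

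The main obstacle is that the moment bounds \eqref{e:J-1}, \eqref{e:JmQlEmk} and the estimates in Lemma \ref{l:XmEmKEst} for $(J^m_s)^{-1}$, $A X_m$ and their derivatives are only available in the weighted form carrying factors of $\ekm(s)$, whereas the Norris lemma is classically stated with unweighted polynomial moments. I would resolve this by working on the good set $G_R := \{\sup_{0 \leq s \leq t} |A X_m(s)| \leq R\}$ with $R$ a suitable power of $\e^{-1}$; on $G_R$ the weights $\ekm(s)$ are bounded both above and below by explicit functions of $R$ and $t$, so the weighted estimates convert to the moment bounds required by Norris, while Lemma \ref{l:XmEmKEst} and Chebyshev make $\PP(G_R^c)$ decay faster than any positive power of $\e$. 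A careful bookkeeping of the exponents through the two Norris iterations and the $\e$-net reduction then yields \eqref{e:MinEigVal} with the claimed $q$ and arbitrary $p$.
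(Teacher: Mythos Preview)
Your overall strategy---Norris iteration along the chain $\mathbf{K_0} \to \mathbf{K_1} \to \mathbf{K_2}$, compactness of the unit sphere, and the restricted H\"ormander condition---is exactly the paper's approach, and the reduction via an $\e$-net is equivalent to the paper's finite disk cover. Two points deserve correction or comparison.

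First, the step ``multiplying through by $J^m_s$ and letting $s \to 0^+$ to obtain $\sup_{K \in \mathbf{K}} |\langle K(x),\eta\rangle| \le C\e^\beta$'' does not work as written: having $\int_0^t |\psi_K(s)|^2\,ds$ small gives no pointwise control of $\psi_K(0)$, since $\psi_K$ could be of order one on a short initial interval and still have small $L^2$ norm. The paper avoids this by exploiting that the H\"ormander lower bound \eqref{con:Hormander} holds at \emph{every} $y$, not just the initial point, and that the relevant $K_2 \in \mathbf{K_2}$ are in fact constant vector fields (see \eqref{e:K2Ins}). Combined with the closeness of $\ekm(s)(J^m_s)^{-1}$ to the identity before the stopping time $\tau$ of \eqref{e:StoTim}, this gives a uniform lower bound $|\psi_{K_2}(s)| \ge \delta'$ on $[0,t\wedge\tau]$, hence $\int_0^{t\wedge\tau} |\psi_{K_2}|^2\,ds \ge (\delta')^2(t\wedge\tau)$. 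Smallness of this integral then forces $\tau$ to be small, whose probability is controlled via \eqref{e:JacEst2}. This is the correct endgame; your contradiction should come from the integral lower bound, not from evaluation at $s=0$.

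Second, the paper replaces your localisation on $G_R = \{\sup_s |AX_m(s)| \le R\}$ by the stopping time $\tau = \inf\{s : |\ekm(s)(J^m_s)^{-1} - \mathrm{Id}| > c\}$. This is cleaner for two reasons: the Norris lemma as quoted requires the coefficients $a,\beta,\gamma,u$ to be bounded up to a \emph{stopping time}, which $\tau$ provides directly, whereas $G_R$ is a global event; and the same $\tau$ simultaneously delivers the closeness of $(J^m_s)^{-1}$ to identity needed in the previous paragraph. Your $G_R$ approach can be made to work by introducing the corresponding first-exit time, but then you are essentially reproducing the paper's $\tau$ (estimate \eqref{e:JacEst2} already gives a deterministic lower bound on $\tau$, making the tail estimate trivial for small $\e$).
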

\ \ \

\begin{proof} [Proof of Lemma \ref{lem:LowDerivative}] We shall simply write $X(t)=X_m(t),
J_t=J^m_t$, $\mathcal{M}_t=\mathcal{M}^m_t$,
$Q^\pl=Q_m^{\pl}$ and $\ek(t)=\ekm(t)$ for the notational simplicity. Under an orthonormal basis of $\pi^\pl H$, the operators
$J_t$, $\mathcal{M}_t$, $\mcl D_v X^\pl(t)$ with $v$ defined in \eqref{e:Malliavin-direction},
and $D^\pl X^\pl(t)$ can all be
represented by $N \times N$ matrices, where $N$ is the dimension of $\pi^\pl H$. Noticing $\mcl D_v X^\pl (t)=J_t \mcl M_t$ (see \eqref{e:DvMHL}), the following $\phi_{il}$ is well defined:
$$\phi_{il}(X(t))=\phi(X(t)) \sum_{j=1}^N [(\mcl D_v X^\pl (t))^{-1}]_{ij} [D^\pl X^\pl (t)]_{jl} \ek(t) \ \ \ i, l=1, \ldots,N,$$
where $v$ is defined in \eqref{e:Malliavin-direction} with $v^\pl(t)=(J^{-1}_tQ^\pl)^{*}$.
For any $h \in \pi^\pl H$, by our special choice of $v$, we have
\begin{equation} \label{e:Malliavin-inverse}
\begin{split}
\mathcal{D}_{vh}\phi_{il}(X(t))&=D^\pl \phi(X(t))
[\mathcal{D}_{v} X^\pl(t)h]\sum_{j=1}^N [(\mcl D_v X^\pl (t))^{-1}]_{ij} [D^\pl X^\pl (t)]_{jl} \ek(t) \\
& \ +\phi(X(t)) \sum_{j=1}^N  \mathcal{D}_{vh}\left\{[(\mcl D_v X^\pl (t))^{-1}]_{ij} [D^\pl X^\pl (t)]_{jl}\right\} \mathcal{E}_K(t) \\
& \ -2K\phi_{il}(X(t)) \int_0^t \Ll AX(s),A\mathcal{D}_{vh}X(s) \Rr ds
\end{split}
\end{equation}
Note that $\pi^\pl H$ is isomorphic to ${\R}^{N}$ under the orthonormal basis.
Take the standard orthonormal basis $\{h_i;i=1,\ldots,N\}$ of ${\R}^{N}$,
which is a representation of the orthonormal
basis of $\pi^\pl H$. Set $h=h_i$ in (\ref{e:Malliavin-inverse}) and sum over
$i$, we obtain
\begin{equation} \label{e:Malliavin-inverse 1}
\begin{split}
& \ \ \ \E \left(D^\pl \phi(X(t)) D_{h_l}^\pl X^\pl (t) \mathcal{E}_K(t)\right) \\
&=\E \left(\sum \limits_{i=1}^{N}
\mathcal{D}_{vh_i} \phi_{il}(X(t))
\right)-\E \left(\sum \limits_{i,j=1}^{N} \phi(X(t))
\mathcal{D}_{vh_i}\left\{[(\mcl D_v X^\pl (t))^{-1}]_{ij} [D^\pl X^\pl (t)]_{jl}\right\}  \mathcal{E}_K(t) \right) \\
& \ \ \ +2K \E \left(\sum \limits_{i=1}^{N} \phi_{il}(X(t))
\int_0^t \Ll AX(s),A\mathcal{D}_{vh_i}X(s) \Rr ds \right)
\end{split}
\end{equation}
Let us first bound the first term on the r.h.s. of (\ref{e:Malliavin-inverse 1})
as follows: By Bismut formula (simply write $v_i=vh_i$), \eqref{e:XmEmKEst}
and the identity $\mathcal{D}_{v} X^\pl(t)=J_t \mathcal{M}_t$, one has
\begin{equation} \label{e:ExpBis}
\begin{split}
& \ \ |\E \left(\sum \limits_{i=1}^{N} \mathcal{D}_{v_i} \phi_{il}(X(t))
\right)|=|\E \left(\sum
\limits_{i,j=1}^{N} \phi(X(t))[\mcl M_t^{-1} J^{-1}_t]_{ij} [D^\pl X^\pl (t)]_{jl} \mathcal{E}_K(t) \int_0^t
\Ll v^\pl_i,dW_s \Rr \right)| \\
&\leq C ||\phi||_k(1+|Ax|^k) \sum_{i,j=1}^N \E\left(\frac {\mathcal{E}_{K/2}(t)} {\lambda_{min}} |J^{-1}_t h_j| |D^\pl_{h_l} X^\pl (t)| |\int_0^t
\Ll v^\pl_i,dW_s \Rr|\right),
\end{split}
\end{equation}
moreover, by H$\ddot{o}$lder's inequality, Burkholder-Davis-Gundy's inequality, \eqref{e:MinEigVal}, \eqref{e:J-1}, \eqref{e:DhXpl} and \eqref{e:JmQlEmk} in order,
\begin{equation} \label{e:EkJ-1Dhl}
\begin{split}
& \ \E\left(\frac {\mathcal{E}_{K/2}(t)} {\lambda_{min}} |J^{-1}_t h_j| |D^\pl_{h_l} X^\pl (t)| |\int_0^t
\Ll v^\pl_i,dW_s \Rr|\right) \\
& \leq \left [\E
\left(\frac{1}{\lambda_{min}^6}\right)\right]^{\frac16}
\left[\E \left(|J^{-1}_t h_j|^6 \ek(t)\right)\right]^{\frac16}
\left[\E \left(|D^\pl_{h_l} X^\pl (t)|^6 \ek(t)\right)\right]^{\frac16}
\left[\E(\int_0^t \mathcal{E}_{\frac K3}(s) |(J^{-1}_sQ^\pl )^{*}h_i|^2 ds)\right]^{\frac 12} \\
& \leq \frac{Ce^{Ct}}{t^{p}}  \ \ \
\end{split}
\end{equation}
where $p>6q+1$ and $C=C(p)$.
Combining \eqref{e:ExpBis} and \eqref{e:EkJ-1Dhl}, we have
\begin{equation}
|\E \left(\sum \limits_{i=1}^{N} \mathcal{D}_{v_i} \phi_{il}(X(t))
\right)|
\leq \frac{Ce^{Ct}}{t^p} ||\phi||_k(1+|Ax|^k)
\end{equation}
where $C=C(p, k)>0$. By the similar method but more complicate calculations (using Lemma \ref{lem:inverse} and the estimates
in Lemma \ref{l:JacEst}), we have the same bounds
for the other two terms on the r.h.s. of \eqref{e:Malliavin-inverse 1}. Hence,
$$|\E \left[D^\pl \phi(X(t)) D_{h_l}^\pl X^\pl(t) \ek(t)\right]| \leq t^{-p} Ce^{C t}||\phi||_k(1+|Ax|^k)$$
for all $t>0$. Since the above argument is in the frame of $\pi^\pl D(A)$ with the orthonormal base
$\{h_l; 1 \leq l \leq N\}$, we have
$$|\E \left[D^\pl \phi(X(t)) D_h X^\pl(t) \ek(t)\right]| \leq t^{-p} C e^{C t} ||\phi||_k(1+|Ax|^k) |h|
\ \ \ \ h \in \pi^\pl H.$$
\end{proof}
\section{Proof of the main theorems} \label{s:ProMaiThm}
\subsection{Gradient estimates of $u_m(t)$}
To prove the strong Feller of the semigroup $P^m_t$ (recall $P^m_t
\phi=u_m(t)$) and the later limiting semigroup $P_t$, a typical method
is to show that $P^m_t$ has a gradient estimate similar to
\eqref{e:StEst}. In \cite{DO06}, one has the same 
estimate as \eqref{e:StEst} \emph{but} with $\alpha=\frac 12+r-\gamma$ therein, thanks to
the property $0<\frac 12+r-\gamma<1$, one can easily show
$$||A^{-\gamma}Du_m(t)||_2 \leq C(t^{-\frac 12-r+\gamma}+1)
||\phi||_0,$$ this is exactly the second
inequality in Proposition 3.5 of \cite{DO06}. \\

In our case, by the same method as in \cite{DO06} (i.e. applying
\eqref{e:StEst} to bound the r.h.s. of \eqref{e:DuhUmt}), we \emph{formally} have
\begin{equation} \label{e:WroDuh}
||A^{-\gamma} D u_m(t)||_2 \leq C t^{-\alpha} ||\phi||_0+KC\int_0^t
(t-s)^{-\alpha} ds ||\phi||_0,
\end{equation}
however, the integral on the r.h.s. of \eqref{e:WroDuh} blows up
due to $\alpha>1$ in \eqref{e:StEst} . \\

We have two ways to overcome the problem of not integrability in
\eqref{e:WroDuh}. One is by an interpolation argument (see Proposition \ref{p:LesGam}),
the other is by some more delicate analysis (see Proposition \ref{p:HigAx}). The
underlying ideas of the two methods are the same, i.e. trading
off the regularity of the space for the integrability of the time. \\

\begin{prop} \label{p:LesGam}
Given $T>0$, for any $0<t \leq T$, $\max\{\frac 12,r-\frac 12\}<\gamma \leq 1$ and $0<\beta<1$, if $\phi \in C^1_b(D(A),\R)$, then
$S^m_t \phi$ and $u^m_t$ are both functions in
$C^{\beta/\alpha}_{2,\gamma}(D(A),\R)$, which is the H$\ddot o$lder space defined by \eqref{e:HolSpa}. Moreover,
\begin{equation} \label{e:LesGamSmt}
||S^m_t \phi||_{C^{\beta/\alpha}_{2,\gamma}} \leq C t^{-\beta} ||\phi||_{2},
\end{equation}
\begin{equation} \label{e:LesGam}
||u_m(t)||_{C^{\beta/\alpha}_{2,\gamma}} \leq C (t^{-\beta}+1) ||\phi||_{0},
\end{equation}
where
$\alpha=p+\frac 12+r-\gamma$ is defined in \eqref{e:StEst} and $C=C(T,\alpha,\beta,\gamma)>0$.
\end{prop}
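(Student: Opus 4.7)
The plan is to interpolate the gradient estimate of Theorem \ref{thm:StBound} against a trivial sup-norm bound. This trades the strong time-singularity $t^{-\alpha}$ (with $\alpha>1$) for an integrable singularity $t^{-\beta}$ (with $\beta<1$), at the cost of lowering the effective spatial exponent from $1$ to $\beta/\alpha$. Once \eqref{e:LesGamSmt} is in hand, the estimate \eqref{e:LesGam} will follow by feeding the new H\"older bound into Duhamel's formula \eqref{e:DuhUmt}; the exponent $-\beta>-1$ makes the time integral convergent, which is precisely the obstruction observed in \eqref{e:WroDuh}.

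For \eqref{e:LesGamSmt}, apply Theorem \ref{thm:StBound} with $k=2$ to obtain
$$|A^{-\gamma}DS^m_t\phi(z)| \le Ct^{-\alpha}||\phi||_2 (1+|Az|^2) \qquad z\in D(A).$$
Writing $S^m_t\phi(x)-S^m_t\phi(y)=\int_0^1\Ll DS^m_t\phi(y+s(x-y)), x-y \Rr ds$, pairing through $A^{-\gamma}A^\gamma$, and using the convexity bound $|A(y+s(x-y))|^2\le 2(|Ax|^2+|Ay|^2)$, one arrives at the Lipschitz-type estimate
$$|S^m_t\phi(x)-S^m_t\phi(y)| \le Ct^{-\alpha}||\phi||_2 (1+|Ax|^2+|Ay|^2)\,|A^\gamma(x-y)|.$$
On the other hand, the Feynman-Kac representation with $\ekm(t)\le 1$ yields the trivial bound $|S^m_t\phi(x)-S^m_t\phi(y)|\le C||\phi||_2(1+|Ax|^2+|Ay|^2)$. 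Combining the two through the elementary inequality $\min(1,a)\le a^{\beta/\alpha}$, valid for $a\ge 0$ and $0<\beta/\alpha<1$ (which holds since $\alpha>1$), applied with $a=t^{-\alpha}|A^\gamma(x-y)|$, produces the H\"older part of \eqref{e:LesGamSmt}; the sup-norm part is immediate from $||S^m_t\phi||_\infty\le||\phi||_\infty\le 2||\phi||_2$.

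For \eqref{e:LesGam}, I would plug \eqref{e:LesGamSmt} into Duhamel's formula $u_m(t)=S^m_t\phi + K\int_0^t S^m_{t-s}\psi_s\,ds$ with $\psi_s(x):=|Ax|^2u_m(s,x)$. The first term is controlled by \eqref{e:LesGamSmt} together with $||\phi||_2\le 2||\phi||_0$. For the integrand, the Feynman-Kac representation gives $||u_m(s)||_\infty\le||\phi||_\infty$, and since $|Ax|^2/(1+|Ax|^2)\le 1$ we obtain $||\psi_s||_2\le||u_m(s)||_\infty\le C||\phi||_0$; applying \eqref{e:LesGamSmt} to $\psi_s$ then yields $||S^m_{t-s}\psi_s||_{C^{\beta/\alpha}_{2,\gamma}}\le C(t-s)^{-\beta}||\phi||_0$. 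Because $\beta<1$, the integral $\int_0^t(t-s)^{-\beta}ds\le T^{1-\beta}/(1-\beta)$ is finite, and summing the pieces delivers \eqref{e:LesGam}; the sup-norm part of the H\"older norm is again bounded directly by $||u_m(t)||_\infty\le||\phi||_\infty$.

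The main difficulty is conceptual rather than computational: Theorem \ref{thm:StBound} is maximally strong in the spatial variable and correspondingly too weak in time (with $\alpha>1$), so Duhamel iteration fails as in \eqref{e:WroDuh}. The interpolation step reverses this imbalance, which is exactly the trade-off between spatial regularity and time integrability emphasized in the introduction. A minor technical point is that $\psi_s$ must belong to the class on which \eqref{e:LesGamSmt} was proved; in the Galerkin setting $\pi_mD(A)$ is finite-dimensional and both $|Ax|^2$ and $u_m(s,\cdot)$ are smooth on it, so this is automatic.
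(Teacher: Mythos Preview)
Your approach is essentially the same as the paper's: interpolate the gradient bound of Theorem \ref{thm:StBound} against a trivial $C_2$ bound to obtain \eqref{e:LesGamSmt}, then feed this into Duhamel's formula \eqref{e:DuhUmt} using $||\psi_s||_2\le||u_m(s)||_0\le||\phi||_0$ to get \eqref{e:LesGam}. The paper phrases the interpolation abstractly as $||S^m_t\phi||_{C^{\beta/\alpha}_{2,\gamma}}\le C\,||S^m_t\phi||_{C^1_{2,\gamma}}^{\beta/\alpha}\,||S^m_t\phi||_2^{1-\beta/\alpha}$, while you do it pointwise via $\min(1,a)\le a^{\beta/\alpha}$; these are the same argument.

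There is one small slip: the inequality $||\phi||_\infty\le 2||\phi||_2$ is false in general (the weight $1+|Ax|^2$ only gives $||\phi||_2\le||\phi||_\infty$, not the reverse), so your bound on the $||\cdot||_2$-part of $||S^m_t\phi||_{C^{\beta/\alpha}_{2,\gamma}}$ is not justified as written. The fix is exactly what the paper does: use the Feynman--Kac representation together with \eqref{e:XmEmKEst} to get $|S^m_t\phi(x)|\le||\phi||_2\,\E[(1+|AX_m(t)|^2)\ekm(t)]\le C||\phi||_2(1+|Ax|^2)$, hence $||S^m_t\phi||_2\le C||\phi||_2$ directly. With this correction your argument is complete.
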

\begin{proof}
On the one hand, for any $x \in D(A)$, by \eqref{e:StmDef} and \eqref{e:XmEmKEst}, one clearly has
\begin{equation*}
\begin{split}
|S^m_t \phi(x)|\leq ||\phi||_2 \E[(1+|AX^m(t)|^2) \ekm(t)] \leq C(1+|Ax|^2) ||\phi||_2
\end{split}
\end{equation*}
where $C>0$ is independent of $m$, $t$ and $x$. Hence, $S^m_t: C_2(D(A),\R) \rightarrow C_{2}(D(A),\R)$
has the estimate
$$||S^m_t \phi||_2 \leq C||\phi||_2.$$
On the other hand, by \eqref{e:StEst}, one has
$S^m_t: C_2(D(A),\R) \rightarrow C^1_{2,\gamma} (D(A),\R)$
with $$||S^m_t \phi||_{C^1_{2,\gamma}} \leq C t^{-\alpha} ||\phi||_2.$$
By a simple calculation with the the above two estimates, we have 
$$||S^m_t \phi||_{C^{\beta/\alpha}_{2,\gamma}} \leq C||S^m_t \phi||^{\beta/\alpha}_{C^{1}_{2,\gamma}}||S^m_t \phi||^{1-\beta/\alpha}_2 \leq C t^{-\beta} ||\phi||_2,$$
for any $0 \leq \beta \leq \alpha$. Take any $0<\beta<1$, applying the above estimate
on the Duhamel formula \eqref{e:DuhUmt} and the clear fact $||u_m(t)||_0 \leq ||\phi||_0$, we immediately have \eqref{e:LesGam}. 
\end{proof}
\begin{prop} \label{p:HigAx}
Given any $T>0$, there exists some
$C=C(T,\alpha,\gamma)>0$ such that
\begin{equation} \label{e:HigAx}
||A^{-\gamma} Du_m(t)||_{2\alpha} \leq C t^{-\alpha} ||\phi||_0
\end{equation}
where $\max\{r-\frac 12, \frac 12\}<\gamma \leq 1$ with $\gamma \neq \frac 34$.
\end{prop}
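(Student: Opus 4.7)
The plan is to mimic the bootstrap strategy from Theorem \ref{thm:StBound}, but starting from the Duhamel identity \eqref{e:DuhUmt} and working with the spatially weighted norm $\|\cdot\|_{2\alpha}$ instead of $\|\cdot\|_2$; the extra growth $(1+|Ax|^{2\alpha})$ is precisely what we trade for the time integrability that \eqref{e:WroDuh} lacked. Applying $A^{-\gamma}D$ to \eqref{e:DuhUmt},
\begin{equation*}
A^{-\gamma}Du_m(t) \;=\; A^{-\gamma}DS^m_t\phi \;+\; K\int_0^t A^{-\gamma}DS^m_{t-s}\bigl[|A\cdot|^2 u_m(s,\cdot)\bigr]\,ds,
\end{equation*}
the first summand is immediately controlled by Theorem \ref{thm:StBound} with $k=\lceil 2\alpha\rceil$ and the trivial bound $\|\phi\|_k\le\|\phi\|_0$, giving $\|A^{-\gamma}DS^m_t\phi\|_{2\alpha}\le Ct^{-\alpha}\|\phi\|_0$.

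For the Duhamel integral set $\psi_s(y):=|Ay|^2u_m(s,y)$. Since $|u_m(s,\cdot)|\le\|\phi\|_0$ and $|Ay|^2\le 1+|Ay|^{2\alpha}$ (valid because $\alpha>1$), we have $\|\psi_s\|_{2\alpha}\le C\|\phi\|_0$ uniformly in $s$. The plan is then to split $\int_0^t=\int_0^{t/2}+\int_{t/2}^{t}$. On $[0,t/2]$ the bound $(t-s)^{-\alpha}\le Ct^{-\alpha}$ together with Theorem \ref{thm:StBound} give a contribution of size $CKt^{1-\alpha}\|\phi\|_0\le CKT\,t^{-\alpha}\|\phi\|_0$ in $\|\cdot\|_{2\alpha}$, which is harmless. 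The delicate region is $[t/2,t]$, where a naive application of Theorem \ref{thm:StBound} produces the non-integrable $\int_{t/2}^t(t-s)^{-\alpha}ds$ flagged in \eqref{e:WroDuh}.

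To handle the range $[t/2,t]$ we differentiate $S^m_{t-s}\psi_s$ directly via the Feynman--Kac chain rule rather than applying Theorem \ref{thm:StBound}. Writing
\begin{equation*}
D\psi_s(y)\cdot k \;=\; 2\langle Ay,Ak\rangle\,u_m(s,y) \;+\; |Ay|^2\,Du_m(s,y)\cdot k,
\end{equation*}
the derivative of $S^m_{t-s}\psi_s(x)$ splits into a piece of size $\|\phi\|_0$ (bounded using the a.s.\ Jacobian estimate \eqref{e:DhX} and the moment bound \eqref{e:XmEmKEst}) and a piece in which $Du_m(s,\cdot)$ reappears at the random point $X^m(t-s,x)$. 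Controlling the second piece by the inductive quantity
\begin{equation*}
\Phi(T) \;:=\; \sup_{0<\tau\le T}\tau^{\alpha}\,\|A^{-\gamma}Du_m(\tau)\|_{2\alpha},
\end{equation*}
and using \eqref{e:IntEmk} to absorb one of the two powers $|AX^m|^2$ against the factor $\ekm(t-s)$ and trade it for an integrable time factor, leads to a self-improving inequality of the form
\begin{equation*}
\Phi(T)\;\le\; C\|\phi\|_0 \;+\; \frac{C}{K}\,\Phi(T),
\end{equation*}
from which, choosing $K$ large exactly as at the end of the proof of Theorem \ref{thm:StBound}, \eqref{e:HigAx} follows.

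The main obstacle will be carrying out the $[t/2,t]$ estimate without overspending the spatial weight: the product of $|Ay|^2$ from $\psi_s$ with the inductive growth $(1+|Ay|^{2\alpha})$ from $Du_m(s,\cdot)$ would, under a naive Cauchy--Schwarz, yield the weight $(1+|Ax|^{2+2\alpha})$, which is larger than what the $\|\cdot\|_{2\alpha}$ norm allows. Overcoming this requires a careful multi-H\"older distribution of the (bounded) factor $\ekm(t-s)$ across $|AX^m|^2$, $(1+|AX^m|^{2\alpha})$ and $|A^\gamma D_hX^m|$, using \eqref{e:XmEmKEst} and \eqref{e:DhX}, combined with the integration identity \eqref{e:IntEmk} to pair off one power of $|AX^m|^2$ against the $ds$-integral. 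If this accounting proves too tight, an interpolation against the Hölder bound of Proposition \ref{p:LesGam}, which supplies a factor $(t-s)^{\beta/\alpha}$ for some $\beta<1$, can be inserted on $[t/2,t]$ to render the time integral convergent at the cost of a slightly larger constant.
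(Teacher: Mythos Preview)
Your overall strategy---Duhamel, split the time integral, control the singular tail by differentiating the Feynman--Kac representation directly, then close a bootstrap in the quantity $\Phi(T)=\sup_{\tau\le T}\tau^\alpha\|A^{-\gamma}Du_m(\tau)\|_{2\alpha}$---matches the paper's. The gap is in the choice of splitting point and, consequently, in the claimed smallness $\tfrac{C}{K}\Phi(T)$.

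With the $x$-independent split at $t/2$, the term carrying $Du_m$ on $[t/2,t]$ reads (after the change $\sigma=t-s$ and using $t-\sigma\ge t/2$)
\begin{equation*}
K\int_0^{t/2}\E\Bigl[|AX_m(\sigma)|^2\,(1+|AX_m(\sigma)|^{2\alpha})\,|A^\gamma D_hX_m(\sigma)|\,\ekm(\sigma)\Bigr]\,d\sigma\;\cdot\;Ct^{-\alpha}\Phi(T).
\end{equation*}
Distributing $\ekm=\mathcal E_{K/2}\,\mathcal E_{K/4}\,\mathcal E_{K/4}$ and using \eqref{e:IntEmk}, \eqref{e:DhX}, \eqref{e:XmEmKEst} gives at best
\begin{equation*}
K\cdot\frac{2}{K}\cdot|A^\gamma h|\cdot C(1+|Ax|^{2\alpha})\cdot t^{-\alpha}\Phi(T)
\;=\;C(1+|Ax|^{2\alpha})|A^\gamma h|\,t^{-\alpha}\Phi(T).
\end{equation*}
The factor $1/K$ produced by \eqref{e:IntEmk} is exactly cancelled by the $K$ from the Duhamel identity \eqref{e:DuhUmt}. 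The resulting bootstrap is $\Phi(T)\le C\|\phi\|_0+C\,\Phi(T)$, with a constant you have no mechanism to make small; it does not close. Your fallback to Proposition \ref{p:LesGam} provides only a H\"older modulus for $u_m$, not an improved derivative bound, and does not supply the missing small factor either. (There is also a minor slip: to land in $\|\cdot\|_{2\alpha}$ from Theorem \ref{thm:StBound} you should take $k$ with $2\le k\le 2\alpha$, e.g.\ $k=2$, not $k=\lceil 2\alpha\rceil$.)

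The paper closes the loop by making the split \emph{$x$-dependent}: set $\beta=1-\dfrac{1}{K^2(1+|Ax|^2)}$. On $[0,\beta t]$ one applies Theorem \ref{thm:StBound} with $k=2$; integrating $(t-s)^{-\alpha}$ up to $\beta t$ produces the growth $(1-\beta)^{1-\alpha}\sim K^{2(\alpha-1)}(1+|Ax|^2)^{\alpha-1}$, which combined with the ambient $(1+|Ax|^2)$ is precisely the origin of the $\|\cdot\|_{2\alpha}$ weight. On $[\beta t,t]$ the $Du_m$ piece, estimated as in your chain rule expansion, carries the spatial weight $(1+|Ax|^{2+2\alpha})$; but now $\int_{\beta t}^t s^{-\alpha}\,ds\le C\,t^{1-\alpha}/\bigl(K^2(1+|Ax|^2)\bigr)$ both strips the extra $|Ax|^2$ back down to $(1+|Ax|^{2\alpha})$ and supplies a genuine $1/K^2$. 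After the Duhamel factor $K$ one is left with $\tfrac{CT}{K}\Phi(T)$, and choosing $K$ large finishes the proof. The missing idea in your proposal is exactly this $x$-dependent interval length.
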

\begin{proof}
The idea of the proof is to split the integral $\int_0^t |D_h S^m_{t-s}(|Ax|^2 u_m(s))|ds$
into two pieces, '$\int_0^{\beta t} \cdots$' and '$\int_{\beta t}^t \cdots$' with some special
$\beta \in (0,1)$, applying \eqref{e:StEst} to the first piece and the probability presentation
of $S^m_{t-s}$ to the other. Roughly speaking, '$\int_0^{\beta t} \cdots$'
takes away the singularity of $(t-s)^{-\alpha}$ at $s=t$, while '$\int_{\beta t}^t \cdots$'
conquers the extra polynomial growth of $|Ax|^2$ in $S^m_{t-s}[|Ax|^2 u_m(s)]$. However,
we have to pay a price of an extra polynomial growth of $|Ax|^{2\alpha}$ for $Du^m(t)$. \\

For the notational simplicity, we shall
drop the index $m$ of the quantities if no confusions arise. Denote
$$\beta=1-\frac{1}{K^2(1+|Ax|^2)},$$
by \eqref{e:StEst} with $k=2$, we have
\begin{equation*}
\begin{split}
|A^{-\gamma} Du(t,x)| & \leq Ct^{-\alpha} ||\phi||_2 (1+|Ax|^2)+KC
\int_0^{\beta t} (t-s)^{-\alpha} ds ||\phi||_0(1+|Ax|^2)\\
&\ \ +K\int_{\beta t}^t |A^{-\gamma} D S_{t-s}(|Ax|^2
u(s))|ds \\
& \leq Ct^{-\alpha} ||\phi||_0 (1+|Ax|^2)+K^{2 \alpha+1}C t^{-\alpha+1}
(1+|Ax|^{2 \alpha})||\phi||_0 \\
&\ \ +K\int_{\beta t}^t |A^{-\gamma} D S_{t-s}(|Ax|^2
u(s))|ds,
\end{split}
\end{equation*}
thus
\begin{equation} \label{e:TAlpADhu}
\begin{split}
t^\alpha |A^{-\gamma} D_h u(t,x)| & \leq C||\phi||_0 (1+|Ax|^2)+K^{2 \alpha+1}Ct ||\phi||_0 (1+|Ax|^{2 \alpha}) \\
& \ \ \ +Kt^\alpha \int_{\beta t}^t |A^{-\gamma} D S_{t-s}(|Ax|^2
u(s))|ds.
\end{split}
\end{equation}
Define
$$u_{\phi,T}=\sup_{0 \leq s \leq T} s^\alpha ||A^{-\gamma} Du(s)||_{2 \alpha},$$
let us estimate the integral on the r.h.s. of \eqref{e:TAlpADhu} in the following way: it is easy to see that
\begin{equation} \label{e:IntBilEst}
\begin{split}
& \ \ \int_{\beta t}^t |D_h S_{t-s}(|Ax|^2
u(s)]|ds \\
&=\int_{\beta t}^t |\E(D_h |AX(t-s)|^2
u(s,X(t-s)) \ek(t-s)]|ds \\
& \ \ +\int_{\beta t}^t |\E(|AX(t-s)|^2
u(s,X(t-s)) D_h\ek(t-s)]|ds\\
& \ \ +\int_{\beta t}^t |\E(|AX(t-s)|^2
D_h u(s,X(t-s)) \ek(t-s)]|ds.
\end{split}
\end{equation}
By the same argument as estimating $I_3$ in the proof of Theorem
\ref{thm:StBound} and the easy fact $||u(t)||_0 \leq ||\phi||_0$ for
all $t \geq 0$, the first two integrals on
the r.h.s. of \eqref{e:IntBilEst} can both be bounded by
$$C(1+|Ax|^2) ||\phi||_0 |A^\gamma h|.$$
The last integral can be estimated as follows:
By \eqref{e:XmEmKEst}, \eqref{e:DhX} and the definition of $u_{T,\phi}$, one has
\begin{equation*}
\begin{split}
& \ \ \int_{\beta t}^t |\E(|AX(t-s)|^2
D_h u(s,X(t-s)) \ek(t-s)]|ds \\
& \leq \int_{\beta t}^t \E[(1+|AX(t-s)|^{2+2\alpha}) \mcl E_{\frac K2}(t-s)
||A^{-\gamma} D u(s)||_{2\alpha} \mcl E_{\frac K2}(t-s) |A^\gamma D_h X(t-s)|]ds \\
& \leq  C (1+|Ax|^{2+2\alpha}) |A^\gamma h| \int_{\beta t}^t ||A^{-\gamma} D u(s)||_{2\alpha} ds \\
& \leq C (1+|Ax|^{2+2\alpha}) |A^\gamma h| \left(\int_{\beta t}^t
s^{-\alpha} ds \right) u_{T, \phi} \\
& \leq \frac{C t^{-\alpha+1}}{K^2}
u_{T,\phi} (1+|Ax|^{2 \alpha}) |A^\gamma h|.
\end{split}
\end{equation*}
Collecting the above three estimates, we have
\begin{equation*}
\begin{split}
\int_{\beta t}^t |A^{-\gamma} DS_{t-s}(|Ax|^2
u(s))|ds \leq C(1+|Ax|^2) ||\phi||_0+\frac{Ct^{-\alpha+1}}{K^2} u_{T,\phi} (1+|Ax|^{2 \alpha}).
\end{split}
\end{equation*}
Plugging this estimate into \eqref{e:TAlpADhu} and dividing the both sides of the inequality by
$(1+|Ax|^{2+2 \alpha})$, one has
$$u_{T,\phi} \leq C||\phi||_0+C K^{2 \alpha+1}T ||\phi||_0+CKT^\alpha ||\phi||_0+\frac{CT}{K} u_{T,\phi}.$$
As $K>0$ is sufficiently large, 
$$u_{T, \phi} \leq \frac{C(1+K^{2\alpha+1}T+KT^\alpha)}{1-CT/K}||\phi||_0,$$
from this inequality, we immediately have \eqref{e:HigAx}.
\end{proof}
\subsection{Proof of Theorem \ref{t:MaiThm1}}
One can pass to the Galerkin approximation limit of $u_m(t)$ by the
same procedures as in \cite{DO06}. For the completeness, we
sketch out the main steps as following. \\

The following proposition is
nearly the same as Proposition 3.6 in \cite{DO06}, only with a small
modification in which \eqref{e:LesGam} plays an essential role.
\begin{prop} \label{e:EquConGalApp}
Let $\phi \in C^1_b(D(A), \R)$ and $T>0$. For any $0<\beta<1/2$,  $t_1 \geq t_2>0$, $m \in \mathbb N$ and $x \in D(A)$, we have some $C(T,\beta)>0$ such that
\begin{equation} \label{e:EquCon}
|u_m(t_1,x)-u_m(t_2,x)| \leq C||\phi||_{C^1_{2,1}}(1+|Ax|^6)(|t_2-t_1|^{\beta}+|A(e^{-At_2}-e^{-At_1})x|).
\end{equation}
\end{prop}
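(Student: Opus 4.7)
My plan exploits directly the Lipschitz bound built into the $C^1_{2,1}$ norm,
\[
|\phi(y_1)-\phi(y_2)| \leq \|\phi\|_{C^1_{2,1}}\,|A(y_1-y_2)|\,(1+|Ay_1|^2+|Ay_2|^2),
\]
together with a mild-form decomposition of $X_m(t_1)-X_m(t_2)$. Starting from the representation $u_m(t_j,x)=\E[\phi(X_m(t_j,x))]$ and realizing both $X_m(t_1,x)$ and $X_m(t_2,x)$ on a single probability space, one writes
\[
u_m(t_1,x)-u_m(t_2,x)=\E\bigl[\phi(X_m(t_1,x))-\phi(X_m(t_2,x))\bigr],
\]
applies the above bound, and then invokes Cauchy--Schwarz to separate the problem into an $L^2$ time-increment factor and a polynomial moment factor:
\[
|u_m(t_1,x)-u_m(t_2,x)|\leq \|\phi\|_{C^1_{2,1}}\bigl(\E|A(X_m(t_1)-X_m(t_2))|^2\bigr)^{1/2}\bigl(\E(1+|AX_m(t_1)|^2+|AX_m(t_2)|^2)^2\bigr)^{1/2}.
\]

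For the time-increment factor I would exploit the mild formulation $X_m(t)=e^{-At}x-\int_0^t e^{-A(t-s)}B_m(X_m(s))\,ds+\int_0^t e^{-A(t-s)}Q_m\,dW_s$, which yields the decomposition $X_m(t_1)-X_m(t_2)=(e^{-At_1}-e^{-At_2})x+\Delta^B+\Delta^W.$ The first summand furnishes the $|A(e^{-At_2}-e^{-At_1})x|$ contribution deterministically. For $\Delta^B$ I would split into $\int_{t_2}^{t_1}$ (short-interval, bounded via $|A^{1/2+\theta}e^{-A\tau}|_{\mathrm{op}}\lesssim\tau^{-1/2-\theta}$ paired with the 3D nonlinear estimate $|A^{1/2}B_m(u,u)|\leq C|Au|^a|A^{1/2}u|^b$) and $\int_0^{t_2}$ with the splitting $e^{-A(t_1-s)}-e^{-A(t_2-s)}=e^{-A(t_2-s)}[e^{-A(t_1-t_2)}-I]$ (bounded via the interpolation $|A^{-\theta}(I-e^{-A\tau})|_{\mathrm{op}}\lesssim\tau^\theta$). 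For $\Delta^W$, the Burkholder--Davis--Gundy inequality together with $\mathrm{Tr}[A^{1+\sigma}QQ^*]<\infty$ from Assumption~\ref{a:Q} gives the Hölder factor of order $\beta<1/2$, which is the origin of the exponent restriction in the statement.

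The moment factor is where Proposition~\ref{p:LesGam} plays its essential role. In 3D there is no direct $D(A)$-energy identity for the Galerkin approximation, so uniform-in-$m$ bounds $\E|AX_m(t,x)|^k\leq C(k,T)(1+|Ax|^k)$ are not immediate. I would obtain them by applying the Duhamel identity \eqref{e:DuhUmt} to appropriate polynomial test functions and using the interpolated Hölder estimate $\|u_m(t)\|_{C^{\beta/\alpha}_{2,\gamma}}\leq C(t^{-\beta}+1)\|\phi\|_0$, whose proof through the $\mathcal{E}_{m,K}$-weighted semigroup $S^m_t$ keeps the constants independent of $m$; this bypasses the divergence encountered in \eqref{e:WroDuh} for $\alpha>1$. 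Multiplying the two factors and absorbing the resulting polynomial growth yields the claimed bound with prefactor $(1+|Ax|^6)$.

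The main obstacle is precisely this uniform-in-$m$ moment control, which is the specific step where the spatial-regularity-versus-time-integrability trade-off of Proposition~\ref{p:LesGam} is indispensable; without it, the direct Kolmogorov-derivative approach used in \cite{DO06} (where $\alpha<1$) breaks down. A secondary technical difficulty is the Sobolev bookkeeping in the 3D nonlinear estimate for $\Delta^B$, which forces $\beta<1/2$ after the right choice of $\theta$, but this is routine once the above framework is in place.
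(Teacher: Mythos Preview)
Your strategy has a genuine gap at the moment-factor step. You propose to obtain uniform-in-$m$ bounds $\E|AX_m(t,x)|^k\leq C(k,T)(1+|Ax|^k)$ by applying Duhamel \eqref{e:DuhUmt} to polynomial test functions together with the H\"older estimate \eqref{e:LesGam}. This does not close. Proposition~\ref{p:LesGam} controls $\|u_m(t)\|_{C^{\beta/\alpha}_{2,\gamma}}$ only in terms of $\|\phi\|_0$, which is infinite for $\phi(y)=|Ay|^k$. If instead you attempt a direct iteration in the scale $\|\cdot\|_k$, Duhamel gives $\|u_m(t)\|_k\leq C\|\phi\|_k+KC\int_0^t\|u_m(s)\|_{k-2}\,ds$, but the base case $\|u_m(s)\|_0\leq\|\phi\|_0$ again requires $\phi$ bounded; for polynomial $\phi$ the chain never starts. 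Unweighted moments $\E|AX_m(t)|^k$ uniform in $m$ are simply not available in 3D --- this is essentially the regularity problem --- and the same obstruction spoils your $\Delta^B$ estimate, which also relies on such moments inside the expectation.

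The argument the paper has in mind (adapting \cite{DO06}, Proposition~3.6) never leaves the weighted world. One writes \eqref{e:DuhUmt} at $t_1$ and $t_2$ and handles $S^m_{t_1}\phi-S^m_{t_2}\phi$ through the Feynman--Kac representation $S^m_t\phi(x)=\E[\phi(X_m(t,x))\,\ekm(t)]$: splitting into $\E[(\phi(X_m(t_1))-\phi(X_m(t_2)))\ekm(t_1)]$ plus $\E[\phi(X_m(t_2))(\ekm(t_1)-\ekm(t_2))]$, the Lipschitz bound on $\phi$ now couples to the \emph{weighted} moment estimate \eqref{e:XmEmKEst}, which does hold uniformly in $m$, and your mild-form decomposition for $A(X_m(t_1)-X_m(t_2))$ becomes legitimate. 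The time increment of the Duhamel integral is where \eqref{e:LesGam} actually enters: it replaces the gradient bound of \cite{DO06} (whose exponent $\alpha>1$ here would make $(t-s)^{-\alpha}$ non-integrable, cf.\ \eqref{e:WroDuh}) by an integrable H\"older factor $(t-s)^{-\beta}$ with $\beta<1$. So Proposition~\ref{p:LesGam} is not a device for producing unweighted moments; it is the tool that keeps the Duhamel integral term under control.
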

\ \

Define
$K_{R}=\{x \in D(A); |Ax| \leq R\}$, which is compact in $D(A^{\gamma})$ for any $\gamma<1$, we have
the following lemma (which is Lemma 4.1
in \cite{DO06}) by applying  Proposition \ref{e:EquConGalApp}.
\begin{lem} \label{l:GalLim}
Assume $\phi \in C^1_b(D(A), \R)$, then there exists a subsequence $(u_{m_k})_{k \in
\mathbb N}$ of $(u_m)$ and a function $u$ on $[0,T] \times D(A)$, such that
\begin{enumerate}
\item $u \in C_b((0,T]\times D(A))$ and for all $\delta>0$ and $R>0$
$$\lim_{k \rightarrow \infty} u_{m_k}(t,x)=u(t,x) \ \ \rm{uniformly
\ on \ [\delta, T] \times K_R.}$$
\item For any $x \in D(A)$, $u(\cdot,x)$ is continuous on $[0,T]$.
\item For any $\max\{\frac 12,r-\frac 12\}<\gamma \leq 1$ with $\gamma \neq \frac 34$, $\delta>0$, $R>0$ and $\beta<min\{1/2,\sigma/2\}$,
 there exists some $C=C(\gamma, \beta, \delta, R,T, \phi)$ such that for
 any $x, y\in K_R$, $t \geq s \geq \delta$,
 \begin{equation*}
 |u(t,x)-u(s,y)| \leq C(|A^{\gamma} (x-y)|+|t-s|^{\beta}).
 \end{equation*}
 \item For any $t \in [0,T]$, $u(t,\cdot) \in C_b(D(A),\R)$.
 \item $u(0)=\phi$.
\end{enumerate}
\end{lem}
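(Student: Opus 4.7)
The strategy is to extract a convergent subsequence of $\{u_m\}$ by Arzel\`a--Ascoli on each compact cylinder $[\delta, T] \times K_R$, assemble a single subsequence via a diagonal argument, and then verify the listed properties of the limit.

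\emph{Uniform boundedness and equicontinuity in $x$.} Since $u_m(t,x) = \E[\phi(X_m(t,x))]$ and $\phi$ is bounded, $|u_m(t,x)| \le \|\phi\|_0$ uniformly in $m, t, x$. Fix $\max\{\tfrac12, r-\tfrac12\} < \gamma \le 1$ with $\gamma \neq \tfrac34$ and $0 < \beta < \min\{1/2,\sigma/2\}$. Proposition~\ref{p:LesGam} yields, for $t \ge \delta$,
\[
|u_m(t,x) - u_m(t,y)| \le C(\delta^{-\beta}+1) \|\phi\|_0 \, (1+|Ax|^2+|Ay|^2) \, |A^\gamma(x-y)|^{\beta/\alpha},
\]
which is a uniform H\"older modulus of continuity on $[\delta,T]\times K_R$ in the $D(A^\gamma)$-metric (using the compact embedding $D(A) \hookrightarrow D(A^\gamma)$, which makes $K_R$ compact there).

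\emph{Equicontinuity in $t$.} Proposition~\ref{e:EquConGalApp} gives for $\delta \le t_2 \le t_1 \le T$ and $x \in K_R$,
\[
|u_m(t_1,x) - u_m(t_2,x)| \le C\|\phi\|_{C^1_{2,1}}(1+R^6)\bigl(|t_1-t_2|^\beta + |A(e^{-At_1}-e^{-At_2})x|\bigr).
\]
To handle the semigroup term uniformly in $x \in K_R$, factor
\[
A(e^{-At_1}-e^{-At_2})x \;=\; e^{-A(t_2-\delta/2)}\bigl(e^{-A(t_1-t_2)}-I\bigr) e^{-A\delta/2} Ax
\]
and combine $\|(e^{-As}-I)A^{-\sigma}\|_{H\to H} \le Cs^\sigma$ with $\|A^\sigma e^{-A\delta/2}\|_{H\to H} \le C\delta^{-\sigma}$ for a fixed $\sigma \in (0,1)$. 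Since $|Ax|\le R$, this produces the bound $|A(e^{-At_1}-e^{-At_2})x| \le C(\delta,R,\sigma)|t_1-t_2|^\sigma$, uniform in $x \in K_R$.

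\emph{Arzel\`a--Ascoli and diagonal extraction.} On the compact metric space $[\delta,T] \times K_R$ (with $K_R$ endowed with the $D(A^\gamma)$-metric), the family $\{u_m\}$ is uniformly bounded and equicontinuous by the two steps above, so Arzel\`a--Ascoli produces a uniformly convergent subsequence. Taking $\delta_n = 1/n$, $R_n = n$ and diagonalizing yields a single subsequence $(u_{m_k})$ converging uniformly on every $[\delta_n, T]\times K_{R_n}$ to a function $u$, which proves (1). Property (4) (continuity of $u(t,\cdot)$ on $D(A)$) follows since $u(t,\cdot)$ is a locally uniform limit of continuous functions on each $K_R$. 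Property (3) is obtained by passing to the limit in the two estimates above (controlling the $x$-term via Proposition~\ref{p:LesGam} and the $t$-term via Proposition~\ref{e:EquConGalApp} together with the semigroup estimate just derived).

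\emph{Continuity at $t=0$ and property (5).} Define $u(0,x):=\phi(x)$. Applying Proposition~\ref{e:EquConGalApp} with $t_2 \to 0^+$ (noting $u_{m_k}(0,x) = \phi(x)$ on the Galerkin projection and using continuity of $\phi$) gives
\[
|u_{m_k}(t,x) - \phi(x)| \le C\|\phi\|_{C^1_{2,1}}(1+|Ax|^6)\bigl(t^\beta + |A(e^{-At}-I)x|\bigr);
\]
letting $k\to\infty$ and then $t\to 0^+$ yields $u(t,x)\to\phi(x)$, establishing both (2) and (5).

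\emph{Main obstacle.} The delicate point is the equicontinuity in $t$ uniformly over $K_R$: because $K_R$ is only compact in $D(A^\gamma)$ (not in $D(A)$), the strong continuity of $e^{-At}$ on $H$ does not directly give uniform smallness of $(e^{-At_1}-e^{-At_2})Ax$ on the bounded set $\{Ax:x\in K_R\}$. Exploiting that $t_1,t_2\ge\delta>0$ lets the semigroup's analytic smoothing convert pointwise continuity into a quantitative $|t_1-t_2|^\sigma$ estimate with a constant depending on $\delta$ and $R$; this is what makes the Arzel\`a--Ascoli argument go through.
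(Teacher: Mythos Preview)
Your approach is the same as the paper's (which simply cites Lemma~4.1 of \cite{DO06} and invokes Proposition~\ref{e:EquConGalApp}): uniform equicontinuity from the two propositions, Arzel\`a--Ascoli on the compact cylinders $[\delta,T]\times K_R$, and a diagonal extraction. Your treatment of the main obstacle---converting $|A(e^{-At_1}-e^{-At_2})x|$ into a quantitative $|t_1-t_2|^{\sigma}$ bound using analyticity of the semigroup and $t_2\ge\delta$---is exactly what is needed and is correct.

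There is one small gap. For property~(3) the statement asks for a \emph{Lipschitz} bound in $|A^{\gamma}(x-y)|$, but Proposition~\ref{p:LesGam} only gives H\"older continuity with exponent $\beta/\alpha<1$, which is strictly weaker for small $|A^{\gamma}(x-y)|$ and does not imply the stated inequality. To obtain~(3) as written, use instead the gradient estimate of Proposition~\ref{p:HigAx}: from $\|A^{-\gamma}Du_m(t)\|_{2\alpha}\le Ct^{-\alpha}\|\phi\|_0$ one gets, for $x,y\in K_R$ and $t\ge\delta$,
\[
|u_m(t,x)-u_m(t,y)|\le C\delta^{-\alpha}\|\phi\|_0(1+2R)^{2\alpha}\,|A^{\gamma}(x-y)|,
\]
and passing to the limit yields the Lipschitz term in~(3). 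This does not affect your extraction argument (any uniform modulus suffices for Arzel\`a--Ascoli), only the precise form of~(3). As a minor notational point, your use of $\sigma$ for the generic semigroup exponent collides with the $\sigma$ of Assumption~\ref{a:Q}; it would be cleaner to use a different letter and then note that choosing it $\ge\beta$ gives the $|t-s|^{\beta}$ claimed in~(3).
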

\begin{lem} \label{l:TigLem}
For any $\delta \in (1/2,1+\sigma]$, there exists some constant $C(\delta)>0$ such that for any
$x \in H$, $m \in \N$, and $t \in [0,T]$, we have
\begin{enumerate}
\item $\E[|X_m(t,x)|^2]+\E\int_0^t |A^{1/2} X_m(s,x)|^2 ds \leq |x|^2+tr(QQ^{*})t.$
\item $\E \int_0^T \frac{|A^{\frac{1+\delta}2} X_m(s,x)|^2}{(1+|A^{\frac{\delta}2}
X_m(s)|^2)^{\gamma_\delta}}ds \leq C(\delta)$, with
$\gamma_\delta=\frac 2{2\delta-1}$ if $\delta \leq 1$ and
$\gamma_\delta=\frac {2\delta+1}{2\delta-1}$ if $\delta>1$.
\end{enumerate}
\end{lem}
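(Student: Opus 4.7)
Part (1) is a routine energy identity, while (2) is the sharper fractional Sobolev estimate that underlies the existence of Leray--Hopf type martingale solutions in 3D.

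For (1), the plan is to apply It\^o's formula to $|X_m(t)|^2$. On the torus, the divergence-free condition built into $H$ gives the cancellation $\langle X_m, B_m(X_m)\rangle = 0$, and the Stokes term contributes $\langle X_m, A X_m\rangle = |A^{1/2}X_m|^2$. Taking expectation kills the stochastic integral $2\int_0^t \langle X_m, Q_m\,dW\rangle$ and yields
$$\E|X_m(t)|^2 + 2\,\E\int_0^t |A^{1/2}X_m(s)|^2\, ds = |x_m|^2 + tr(Q_m Q_m^*)\,t,$$
from which (1) follows via $|x_m|\leq |x|$ and $tr(Q_m Q_m^*)\leq tr(QQ^*)$.

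For (2), I would apply It\^o's formula to the functional $F(u) := \phi(|A^{\delta/2} u|^2)$, where
$$\phi(r) := \frac{1}{\gamma_\delta-1}\bigl(1-(1+r)^{1-\gamma_\delta}\bigr),$$
so that $\phi'(r)=(1+r)^{-\gamma_\delta}\in(0,1]$ and $\phi''(r)=-\gamma_\delta(1+r)^{-\gamma_\delta-1}\leq 0$. Because $\gamma_\delta>1$ in both ranges $\delta\leq 1$ and $\delta>1$, $F$ is uniformly bounded by $1/(\gamma_\delta-1)$, which is what will permit the final rearrangement. The It\^o expansion produces three pieces: (i) the \emph{good} linear drift $-2\phi'(|A^{\delta/2}X_m|^2)\,|A^{(1+\delta)/2}X_m|^2$, which is $-2$ times the integrand appearing in (2); (ii) the nonlinear drift $-2\phi'(|A^{\delta/2}X_m|^2)\,\langle A^{\delta}X_m, B_m(X_m)\rangle$; and (iii) the trace correction $\tfrac12 tr(Q_m Q_m^* D^2F)$, whose $\phi''$-part is non-positive and may be dropped, while the $\phi'$-part is dominated by $tr(A^{\delta}Q_m Q_m^*)\leq tr(A^{1+\sigma}QQ^*)<\infty$ by Assumption (A3) (which is where the hypothesis $\delta\leq 1+\sigma$ enters). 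To control (ii) I would invoke a standard three-dimensional bilinear estimate of the form
$$|\langle A^{\delta}u, B(u,u)\rangle|\leq C\,|A^{(1+\delta)/2}u|^{a}\,|A^{\delta/2}u|^{b},$$
with $(a,b)$ different in the two ranges of $\delta$, then Young's inequality absorbs half of the good term and leaves a remainder of the form $C\,\phi'(|A^{\delta/2}X_m|^2)\,|A^{\delta/2}X_m|^{c}$; the specific value of $\gamma_\delta$ stated in the lemma is exactly the threshold at which this remainder is pointwise bounded by $C(1+|A^{1/2}X_m|^2)$, whose $\E\int_0^T$ is finite by (1). Taking expectation, discarding the martingale, and using $0\leq F\leq 1/(\gamma_\delta-1)$ gives (2).

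The main obstacle is matching the exponent $\gamma_\delta$ to the bilinear estimate in each regime. The discontinuity at $\delta=1$ simply reflects that for $\delta\leq 1$ the top derivative $A^{(1+\delta)/2}$ can be split symmetrically between the two factors of $B(u,u)$, whereas for $\delta>1$ all of the additional regularity must be placed on a single factor, which costs a larger power of $|A^{\delta/2}X_m|^2$ in the remainder and forces the larger value $(2\delta+1)/(2\delta-1)$. Everything else is a standard It\^o--trace computation combined with the freedom of Young's inequality.
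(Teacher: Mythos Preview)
The paper does not prove this lemma; it simply cites it as identical to Lemma~7.6 of \cite{DPD03} (or Lemma~5.1 of \cite{DO06}). Your proposal is precisely the argument of \cite{DPD03}: the energy identity for part~(1), and for part~(2) It\^o's formula applied to $(1+|A^{\delta/2}X_m|^2)^{1-\gamma_\delta}/(\gamma_\delta-1)$, combined with a 3D bilinear estimate and Young's inequality tuned so that the remainder is controlled by $\E\int_0^T |A^{1/2}X_m|^2\,ds$ from part~(1). So at the strategic level you and the cited reference agree.

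One caution on the details of~(2): the remainder you obtain after Young's inequality is not directly $C(1+|A^{1/2}X_m|^2)$ but rather $C\,\phi'(|A^{\delta/2}X_m|^2)\,|A^{\delta/2}X_m|^{c}$ for some exponent $c$ depending on $\delta$; one still needs an interpolation between $|A^{1/2}u|$ and $|A^{(1+\delta)/2}u|$ to reduce this to a quantity controlled by part~(1), and it is this interpolation together with the bilinear exponent that pins down the stated values of $\gamma_\delta$. Your narrative acknowledges this (``the specific value of $\gamma_\delta$ \ldots is exactly the threshold''), but in a full write-up you would have to display the bilinear inequality explicitly in each regime and check the arithmetic, as \cite{DPD03} does.
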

By (1) of Lemma \ref{l:TigLem}, we can prove that the laws $\mathcal{L}(X_m(\cdot,x))$
is tight in $L^2([0,T],D(A^{s/2}))$
for $s<1$ and in $C([0,T],D(A^{-\alpha}))$ for $\alpha>0$. By Skohorod's embedding
Theorem, one can construct
a probability space $(\Omega_x, \mathcal F_x, \PP_x)$ with a
random variable $X(\cdot,x)$ valued in $L^2([0,T],D(A^{s/2})) \cap C([0,T],D(A^{-\alpha}))$
such that for any $x \in D(A)$ there exists some subsequence $\{X_{m_k}\}$ satisfying
\begin{equation} \label{e:WeaLimXm}
X_{m_k}(\cdot,x) \rightarrow X(\cdot,x) \ \ \ \ \ d\PP_x \ a.s.
\end{equation}
in $L^2([0,T],D(A^{s/2})) \cap C([0,T],D(A^{-\alpha}))$.
Moreover, by (3) of Lemma \ref{l:TigLem}, for $x \in D(A)$ we have (see (7.7) in \cite{DPD03})
\begin{equation}  \label{e:WeaLimXmA}
X_{m_k}(t,x) \rightarrow X(t,x) \ \ \ {\rm in } \ D(A) \ \ dt \times d\PP_x  \ a.s. \ [0,T] \times \Omega_x.
\end{equation}
\ \

Note that the subsequence $\{u_{m_k}\}$ in Lemma \ref{l:GalLim} depends on
$\phi$, by the separable property of $C(D(A),\R)$, we can find a subsequence $\{m_k\}$ of $\{m\}$,
independent of $\phi$, such that
$\{u_{m_k}\}_k$ converges. That is, we have the following lemma, which is Lemma 7.5 of \cite{DPD03}.
\begin{lem} \label{l:ComSubSeq}
There exists a subsequence $\{m_k\}$ of $\{m\}$ so that for any $\phi \in C^1_b(D(A), \R)$,
one has a function $u^{\phi} \in C_b([0,T] \times D(A))$ satisfying
\begin{equation} \label{e:ComLimUmk}
\lim_{k \rightarrow \infty} u^{\phi}_{m_k}(t,x)=u^{\phi}(t,x) \ \ \ for \ all \
(t,x) \in (0,T]\times D(A)
\end{equation}
and
\begin{equation*}
u^{\phi}_{m_k}(t,x) \rightarrow u^{\phi}(t,x) \ \ \ {\rm uniformly \ in \ [\delta,T] \times K_R \ for \
any \ \delta>0, R>0}.
\end{equation*}
where $u^{\phi}_{m_k}(t,x)=\E[\phi(X_{m_k}(t,x))]$.
\end{lem}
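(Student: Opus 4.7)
The plan is a Cantor diagonal extraction combined with a density argument, in the spirit of Lemma 7.5 of \cite{DPD03}. The whole argument rests on the linearity of the map $\phi \mapsto u^\phi_m(t,x) = \E[\phi(X_m(t,x))]$ and its sup-norm bound $|u^\phi_m(t,x)| \leq \|\phi\|_\infty$, which is independent of $m$.

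First I would fix a countable family $\{\phi_j\}_{j \geq 1} \subset C^1_b(D(A), \R)$ which is dense with respect to uniform convergence on bounded subsets of $D(A)$; such a family exists by separability of $D(A)$ (one may, for instance, build it from smooth cylindrical functions based on a fixed countable basis of $D(A)$). Applying Lemma \ref{l:GalLim} to each $\phi_j$ produces a subsequence $\{m^{(j)}_k\}_k$ along which $u^{\phi_j}_{m^{(j)}_k}$ converges to some $u^{\phi_j} \in C_b((0,T] \times D(A))$, pointwise on $(0,T] \times D(A)$ and uniformly on $[\delta, T] \times K_R$ for every $\delta, R > 0$. A standard Cantor diagonal extraction then yields a single subsequence $\{m_k\}_{k \geq 1}$, independent of $j$, such that $u^{\phi_j}_{m_k} \to u^{\phi_j}$ for every $j$ simultaneously, with the same modes of convergence preserved.

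It remains to extend the convergence to arbitrary $\phi \in C^1_b(D(A), \R)$. Fixing $(t, x) \in (0, T] \times D(A)$ and $\eta > 0$, I would use Lemma \ref{l:TigLem}(1), which gives $\sup_m \E[|X_m(t,x)|^2] \leq |x|^2 + \tr(QQ^*)t$, together with Chebyshev's inequality, to choose $R$ so large that $\sup_m \PP[|X_m(t,x)| > R] < \eta$. The density of $\{\phi_j\}$ then provides some $\phi_j$ with $\sup_{|y| \leq R,\, y \in D(A)} |\phi(y) - \phi_j(y)| < \eta$. Writing
\[
u^\phi_{m_k}(t,x) - u^\phi_{m_l}(t,x) = \E[(\phi - \phi_j)(X_{m_k}(t,x))] + \bigl(u^{\phi_j}_{m_k}(t,x) - u^{\phi_j}_{m_l}(t,x)\bigr) - \E[(\phi - \phi_j)(X_{m_l}(t,x))],
\]
the middle term tends to zero as $k, l \to \infty$ by the diagonal step, while each outer term is bounded by $\eta + 2\|\phi\|_\infty \eta$; hence $\{u^\phi_{m_k}(t,x)\}$ is Cauchy and defines $u^\phi(t,x)$. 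The local uniform convergence on $[\delta, T] \times K_R$, and the continuity of $u^\phi$ on $(0,T] \times D(A)$, follow from the same estimate combined with the equicontinuity provided by Lemma \ref{l:GalLim} (applied both to $\phi$ and to the approximants $\phi_j$).

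The principal delicate point is this last extension: since $C^1_b(D(A), \R)$ is not separable in the sup norm, the approximation by $\{\phi_j\}$ can only be performed on bounded subsets of $D(A)$, and it must be complemented by a uniform-in-$m$ tail estimate, which is exactly what Lemma \ref{l:TigLem}(1) delivers. Provided the separating family $\{\phi_j\}$ is chosen to approximate arbitrary $C^1_b$ functions uniformly on bounded subsets of $D(A)$, all steps go through as described.
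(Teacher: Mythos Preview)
Your overall strategy—countable family, Cantor diagonalization, then approximation—is exactly what the paper intends (it gives no proof beyond citing ``the separable property of $C(D(A),\R)$'' and Lemma 7.5 of \cite{DPD03}). But your extension from $\{\phi_j\}$ to arbitrary $\phi$ has a genuine mismatch that would not go through as written.

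You invoke Lemma \ref{l:TigLem}(1) to obtain the tail bound $\sup_m \PP(|X_m(t,x)|>R)<\eta$; this is a bound in the $H$-norm. You then approximate $\phi$ by $\phi_j$ uniformly on the set $\{y\in D(A):|y|\le R\}$, which is an $H$-ball intersected with $D(A)$. But this set is \emph{unbounded} in $D(A)$, and functions in $C^1_b(D(A),\R)$---which by the paper's definition satisfy only $|D_h\phi(x)|\le C|Ah|$---need not be uniformly continuous in the $H$-norm. For instance, $\phi(x)=f(|Ax|)$ with $f$ bounded and oscillating cannot be uniformly approximated on $\{|y|_H\le R\}\cap D(A)$ by any countable family of cylindrical functions, since $|Ay|$ is unbounded there and cylindrical functions ignore high modes. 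So the density you assert at the outset (on ``bounded subsets of $D(A)$'', i.e., on the sets $K_R$) is correct, but it does not match the region singled out by your tail estimate. Conversely, the tail estimate you would actually need, namely $\sup_m\PP(|AX_m(t,x)|>R)\to 0$, is \emph{not} delivered by Lemma \ref{l:TigLem}(1) and is precisely the kind of uniform-in-$m$ strong-norm control one does not have for 3D Navier--Stokes.

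The way around this is to avoid the tail estimate entirely and lean harder on Lemma \ref{l:GalLim}. For every $\phi\in C^1_b(D(A))$ that lemma already gives relative compactness of $\{u^\phi_m\}_m$ in each $C([\delta,T]\times K_R)$; so along your diagonal subsequence $\{m_k\}$, any further subsequence of $\{u^\phi_{m_k}\}$ has a convergent sub-subsequence, and it remains only to show all such limits coincide. For that one can use the construction in \eqref{e:WeaLimXm}--\eqref{e:WeaLimXmA} (which rests on Lemma \ref{l:TigLem}(2), not (1)): along a suitable sub-subsequence, $X_{m_{k_l}}(t,x)\to X(t,x)$ in $D(A)$ for $dt\times d\PP_x$-a.e.\ $(t,\omega)$, so by bounded convergence $u^\phi_{m_{k_l}}(t,x)\to\E_x[\phi(X(t,x))]$ for a.e.\ $t$, and then the equicontinuity in $t$ from Proposition \ref{e:EquConGalApp} upgrades this to all $t\in(0,T]$. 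Since the limit is identified independently of the sub-subsequence, the full sequence $\{u^\phi_{m_k}\}$ converges.
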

Take the subsequence $\{m_k\}$ in Lemma
\ref{l:ComSubSeq} and define
\begin{equation} \label{e:SemGro1}
P_t \phi(x)=u^{\phi} (t,x).
\end{equation}
for all $(t,x) \in [0,T] \times D(A)$, where $u^{\phi}$ is defined by
\eqref{e:ComLimUmk}. By Riesz Representation Theorem for functionals (\cite{Fo99}, page 223) and the easy fact $P_t {\bf 1}=1$, \eqref{e:SemGro1}
determines a unique probability
measure $P^{*}_t \delta_x$ supported on $D(A)$.  By \eqref{e:WeaLimXm},
for any $x \in D(A)$, we have some subsequence $\{m^x_k\}$
of $\{m_k\}$ so that $X_{m^x_k}(\cdot,x) \rightarrow X(\cdot,x)$ in
$L^2([0,T],D(A^{s/2})) \cap C([0,T],D(A^{-\alpha}))$
a.s. $d \PP_x$, hence
$$P_t\phi(x)=\E_x[\phi(X(t,x))]$$
for all $\phi \in C^1_b(D(A), \R) \cap C_b(D(A^{-\alpha}),\R)$.
Since the measure $P^{*}_t \delta_x$ is supported on $D(A)$, $\PP_x(X(t,x)  \in D(A))=1$, which
is (1) of Definition \ref{d:MarFam}.
By a classic approximation ($\mcl B_b(D(A),\R)$ can be approximated by $C(D(A),\R)$), we have
\begin{equation} \label{e:PtOnBouFun}
P_t \phi(x)=\E_x[\phi(X(t,x))] \ \rm{is \ well \ defined \ for \ all} \ \phi \in \mathcal B_b(D(A),\R).
\end{equation}
With the above observation, we can easily prove Theorem \ref{t:MaiThm1} as follows:
\begin{proof} [{Proof of Theorem \ref{t:MaiThm1}}]
Since $X_{m_k}(\cdot,x) \rightarrow X(\cdot,x)$ a.s. $\PP_x$
in $C([0,T],D(A^{-\alpha}))$ and the map $x \rightarrow \mathcal P^{m_k}_x$
is measurable ($\mathcal P^{m_k}_x$ is the law of $X_{m_k}(\cdot,x)$),
the map $x \rightarrow \mathcal P_x$ is also measurable. The following lemma is exactly
Lemma 4.5 in \cite{DO06} and expressed as
\begin{lem}
Let $X(\cdot,x)$ be the limit process of a subsequence
$\{X_{m_k}\}_k$. Then, for any $M,N \in \N$, $t_1, \cdots, t_n \geq 0$ and $(f_k)_{k=0}^M$
with each $f_k \in C^{\infty}_c(\pi_N H,\R)$, we have
\begin{equation} \label{e:MarOnGooFun}
\E_x[f_0(X(0,x)) f_1(X(t_1,x)) \cdots f_M(X(t_1+\cdots +t_M,x))]=
f_0(x)P_{t_1}[f_1 P_{t_2} (f_2P_{t_3}f_3 \cdots )](x)
\end{equation}
where each $f_k(x)=f_k(\pi_Nx)$ and $P_t$ is defined by \eqref{e:PtOnBouFun}.
\end{lem}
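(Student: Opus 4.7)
The plan is to start from the trivial finite-dimensional Markov identity for the Galerkin approximations $X_m$ and pass to the limit along the subsequence $\{m_k^x\}$ guaranteed by \eqref{e:WeaLimXm}. For each $m$, $X_m$ solves a finite-dimensional SDE and is therefore genuinely Markov with transition semigroup $P^m_t$. Iterated conditional expectation at the times $0 < t_1 < t_1+t_2 < \cdots < t_1+\cdots+t_M$ yields, writing $s_j := t_1+\cdots+t_j$,
\begin{equation*}
\E\Bigl[\prod_{j=0}^M f_j(X_m(s_j,x))\Bigr] = f_0(x)\, P^m_{t_1}\bigl[f_1 P^m_{t_2}[f_2 \cdots P^m_{t_M} f_M]\bigr](x),
\end{equation*}
so the proof of \eqref{e:MarOnGooFun} reduces to showing that both sides converge to their natural limits along $m = m_k^x \to \infty$.

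For the left-hand side, $X_{m_k^x}(\cdot,x) \to X(\cdot,x)$ a.s.\ in $C([0,T], D(A^{-\alpha}))$, so the values at the fixed times $s_j$ converge a.s.\ in $D(A^{-\alpha})$. Since $\pi_N$ is a finite-rank projection bounded on every $D(A^s)$ and each $f_j \in C^\infty_c(\pi_N H)$, the map $y \mapsto f_j(\pi_N y)$ is continuous and bounded on $D(A^{-\alpha})$. Dominated convergence then gives $\E\bigl[\prod_j f_j(X_{m_k^x}(s_j,x))\bigr] \to \E_x\bigl[\prod_j f_j(X(s_j,x))\bigr]$.

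For the right-hand side, I argue by induction on $M$. The base case $M=1$ is immediate from Lemma \ref{l:ComSubSeq}, which gives $P^{m_k}_{t_1} f_1(x) \to P_{t_1} f_1(x)$ since $f_1 \circ \pi_N \in C^1_b(D(A),\R)$. For the inductive step, set
\begin{equation*}
\psi^{m}(y) := P^{m}_{t_2}\bigl[f_2 P^{m}_{t_3}[\cdots P^{m}_{t_M} f_M]\bigr](y), \qquad \psi(y) := P_{t_2}\bigl[f_2 \cdots P_{t_M} f_M\bigr](y),
\end{equation*}
and assume inductively that $\psi^{m_k} \to \psi$ uniformly on compact subsets of $D(A^\gamma)$ for some $\gamma<1$, with $\|\psi^{m_k}\|_\infty \leq \prod_{j\geq 2}\|f_j\|_\infty$. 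Writing $P^{m_k}_{t_1}[f_1 \psi^{m_k}](x) = \E\bigl[f_1(X_{m_k}(t_1,x))\, \psi^{m_k}(X_{m_k}(t_1,x))\bigr]$, one upgrades the $D(A^{-\alpha})$-a.s.\ convergence of $X_{m_k^x}(t_1,x)$ to convergence in probability in $D(A^\gamma)$ via the moment bounds of Lemma \ref{l:TigLem} (which give tightness in $D(A^\gamma)$), and then combines this with the equicontinuity of $\{\psi^{m_k}\}$ on compacts to obtain $\psi^{m_k}(X_{m_k^x}(t_1,x)) \to \psi(X(t_1,x))$ in probability; uniform boundedness then upgrades this to convergence of expectations.

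The main obstacle is precisely this inductive step: pointwise convergence $\psi^{m_k}(y) \to \psi(y)$ is insufficient when evaluating at the moving random point $X_{m_k}(t_1,x)$. The resolution relies on two ingredients already in the paper: (i) equicontinuity of the family $\{\psi^{m_k}\}$ on compacts of $D(A^\gamma)$ obtained from the uniform-in-$m$ gradient estimate of Proposition \ref{p:HigAx}, applied to each nested semigroup; and (ii) tightness of the time-$t_1$ marginals of $X_{m_k}(t_1,x)$ in $D(A^\gamma)$ from the moment bound in Lemma \ref{l:TigLem}. Equating the two convergent expressions yields \eqref{e:MarOnGooFun}.
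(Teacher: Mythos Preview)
The paper does not give its own proof here; it simply cites Lemma~4.5 of \cite{DO06}. Your overall strategy---establish the identity for each Galerkin approximation and pass to the limit on both sides---is the standard one and is what \cite{DO06} and \cite{DPD03} do. Your treatment of the left-hand side is correct.

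The gap is in the inductive step for the right-hand side. You claim that Lemma~\ref{l:TigLem} yields tightness of the fixed-time marginals $\{X_{m_k}(t_1,x)\}_k$ in $D(A^\gamma)$ for the $\gamma>1/2$ needed to exploit Proposition~\ref{p:HigAx}. But Lemma~\ref{l:TigLem} only supplies an $L^2(\Omega;H)$ bound at fixed times together with time-integrated bounds of the form $\E\int_0^T(\cdots)\,ds$; neither controls $|A^\gamma X_m(t_1,x)|$ uniformly in $m$ at a prescribed $t_1$. This is precisely the well-known obstruction for 3D Navier--Stokes, and it cannot be circumvented by the moment bounds you cite.

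The route taken in \cite{DO06}/\cite{DPD03} avoids this by using \eqref{e:WeaLimXmA} rather than tightness: along the chosen subsequence one has $X_{m_k}(t,x)\to X(t,x)$ in $D(A)$ for $dt\times d\PP_x$-a.e.\ $(t,\omega)$. By Fubini, for Lebesgue-a.e.\ $t_1$ the convergence holds in $D(A)$ almost surely; on that event the sequence $\{X_{m_k}(t_1,x)(\omega)\}$ eventually lies in some $K_{R(\omega)}$, and then the uniform convergence $\psi^{m_k}\to\psi$ on each $K_R$ (propagated inductively via Lemma~\ref{l:ComSubSeq} and the equicontinuity from Proposition~\ref{p:HigAx}) gives $\psi^{m_k}(X_{m_k}(t_1,x))\to\psi(X(t_1,x))$ a.s., hence in expectation by dominated convergence. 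This proves \eqref{e:MarOnGooFun} for a.e.\ choice of the $t_i$; the extension to all $t_i\ge 0$ then follows from the uniform-in-$m$ time continuity in Proposition~\ref{e:EquConGalApp} and Lemma~\ref{l:GalLim}~(3). Your equicontinuity ingredient~(i) is correct and is used; it is only ingredient~(ii) that needs to be replaced as above.
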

One can easy extend \eqref{e:MarOnGooFun} from $C^{\infty}_c(\pi_N H,\R)$ to
$\mathcal B_b(D(A),\R)$, which easily implies the Markov property of
the family $(\Omega_x,\mathcal F_x, \PP_x, X(\cdot,x))_{x \in D(A)}$.
\end{proof}
\subsection{Proof of Theorem \ref{t:MaiThm2}} \label{s:Erg}
To prove the ergodicity, we first prove that \eqref{e:NSEabs} has at least one invariant measure, and then show the uniqueness by Doob's Theorem. With the ergodic measure, we follow the
coupling method in \cite{Od07} to prove the exponential mixing property \eqref{e:ExpMix}.
\begin{lem} \label{l:ErgDynM}
Each approximate stochastic dynamics $X_{m}(t)$ has a unique
invariant measure $\nu_{m}$.
\end{lem}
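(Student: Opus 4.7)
The plan is to invoke the classical Krylov-Bogoliubov argument for existence and Doob's theorem for uniqueness, with both relying on standard finite-dimensional SDE theory applied to the Galerkin system \eqref{e:GalerkinM}.

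For existence, I would first establish a uniform-in-time energy estimate. Applying It\^o's formula to $|X_m(t)|^2$ and using the cancellation $\langle B_m(X_m), X_m\rangle = 0$ (which follows from the divergence-free, projected structure of $B$), one obtains
\begin{equation*}
d|X_m|^2 + 2|A^{1/2}X_m|^2\,dt = \mathrm{tr}(Q_m Q_m^{*})\,dt + 2\langle X_m, Q_m dW_t\rangle.
\end{equation*}
Taking expectations and using the Poincar\'e inequality $|A^{1/2}x|^2 \geq |x|^2$ on $H$ yields $\mathbb{E}|X_m(t)|^2 \leq e^{-2t}|x_m|^2 + \mathrm{tr}(Q_mQ_m^{*})$, in particular the family $\{\mathcal{L}(X_m(t)) : t \geq 0\}$ has uniformly bounded second moment on the finite-dimensional space $\pi_m D(A)$, hence is tight. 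The Krylov-Bogoliubov procedure then produces an invariant measure $\nu_m$ as a weak limit point of the Ces\`aro averages $\frac{1}{T}\int_0^T P_t^{m,*}\delta_{x_m}\,dt$.

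For uniqueness, I would argue by Doob's theorem, which requires the strong Feller property together with irreducibility of the Markov semigroup associated with \eqref{e:GalerkinM}. Since the Galerkin SDE is finite-dimensional with smooth coefficients, both properties will follow once the diffusion is shown to be hypoelliptic at every point, and this is essentially the content of Proposition \ref{prop:modified Hormander}. For modes in $Z_\pl(m) \setminus Z_\pl(n_0)$ the noise acts directly via $Q_m$, and for the remaining finitely many modes in $Z_\pl(n_0)$ the Lie brackets in $\mathbf{K}_0 \cup \mathbf{K}_1 \cup \mathbf{K}_2$ span the missing directions by Proposition \ref{prop:modified Hormander}. Thus H\"ormander's theorem gives smooth, strictly positive transition densities $p_t^m(x,\cdot)$, from which strong Feller is immediate and irreducibility follows by a standard support argument (control theorem), or alternatively by noting that strict positivity of the density already implies $P_t^m(x, U) > 0$ for every nonempty open $U$.

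The main obstacle is bookkeeping rather than analysis: one must verify that the H\"ormander brackets computed in Proposition \ref{prop:modified Hormander} for the \emph{low-frequency} subspace $\pi^\pl H$ combined with the nondegenerate forcing on the remaining modes $Z_\pl(m) \setminus Z_\pl(n)$ actually span all of $\pi_m H$ at every point. This is straightforward once one observes that on the directly-forced modes the span is immediate from $\mathbf{K}_0$, while on the missing modes the bracket identity \eqref{e:K2Ins} together with the choice of $n$ made at the end of the proof of Proposition \ref{prop:modified Hormander} (ensuring all the intermediate modes $j_k, l_k, j_k \pm l_k$ lie in $Z_\pl(n) \subset Z_\pl(m)$) supplies the required directions. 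With H\"ormander's condition in hand, Doob's theorem applied to $P^m_t$ gives uniqueness of $\nu_m$.
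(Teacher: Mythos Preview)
Your argument is correct, but it takes a different route from the paper's. The paper obtains the strong Feller property of $P^m_t$ directly from the gradient estimates developed in Propositions~\ref{p:LesGam} or~\ref{p:HigAx} (which rest on all of the Malliavin machinery of Sections~\ref{s:KeyLem}--\ref{s:MalCal}), and then, rather than proving full irreducibility, shows only that $0$ lies in the support of every invariant measure (citing \cite{EM01}) and invokes the criterion of \cite{HM06}, Corollary~3.17, to conclude uniqueness. Your approach bypasses the paper's own estimates entirely: you treat \eqref{e:GalerkinM} as a finite-dimensional SDE and appeal to the classical H\"ormander theorem, using the bracket computation of Proposition~\ref{prop:modified Hormander} to span the missing modes in $Z_\pl(n_0)$, and then apply Doob's theorem. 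This is more elementary and self-contained for the Galerkin level, and it works; the paper's route has the virtue of reusing its central estimates. One small caveat: H\"ormander's theorem gives smoothness of the transition density, not strict positivity; the irreducibility genuinely needs the control/support argument you mention, so that clause should be the primary justification rather than an afterthought.
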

\begin{proof}
By Proposition \ref{p:LesGam} or Proposition \ref{p:HigAx}, we can
easily obtain that $P^m_t$ is strong Feller. The existence of the
invariant measures for $X^m(t)$ is standard (see \cite{DPZ96}), and
it is easy to prove that $0$ is the support of each invariant
measure (see Lemma 3.1, \cite{EM01}). Therefore, by Corollary 3.17
of \cite{HM06}, we conclude the proof.
\end{proof}

The following lemma is the same as Lemma 7.6 in \cite{DPD03} (or Lemma 5.1 in \cite{DO06}).
\begin{lem}
There exists some constant $C>0$ so that
\begin{equation}
\int_H [|Ax|^2+|A^{1/2}x|^{2/3}+|A^{1+\sigma/2}x|^{(1+\sigma)/(10+8\sigma)}] \nu_{m_k}(dx)<C
\end{equation}
where $\sigma>0$ is the same as in Assumption \ref{a:Q}.
\end{lem}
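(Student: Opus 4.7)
The strategy is to convert time-averaged estimates for the Galerkin dynamics $X_{m_k}$ into integrals against $\nu_{m_k}$ by stationarity, and then to run It\^o energy estimates uniformly in $m$. Since $\nu_{m_k}$ is invariant, for any measurable $f\ge 0$ and any $T>0$,
\begin{equation*}
\int_H f(x)\,\nu_{m_k}(dx)=\frac{1}{T}\int_H\E_x\!\left[\int_0^T f(X_{m_k}(s))\,ds\right]\nu_{m_k}(dx),
\end{equation*}
so it suffices to produce uniform-in-$m$ time-averaged bounds for each of the three choices of $f$.

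First, apply It\^o's formula to $|X_m|^2$. Because $\langle B_m(u),u\rangle=0$, one obtains
\begin{equation*}
|X_m(t)|^2+2\int_0^t|A^{1/2}X_m|^2\,ds=|x|^2+\tr(Q_mQ_m^*)\,t+2\int_0^t\langle X_m,Q_m\,dW\rangle.
\end{equation*}
Taking expectation in the stationary state and dividing by $t$ gives $\int|A^{1/2}x|^2\,\nu_{m_k}(dx)\le\tfrac12\tr(QQ^*)$, uniformly in $k$. The $|A^{1/2}x|^{2/3}$ bound is then immediate by Jensen.

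For the two terms involving $|Ax|$ and $|A^{1+\sigma/2}x|$, the dissipation $|AX_m|^2$ does not pointwise dominate $\langle B_m(X_m),AX_m\rangle$ in three dimensions, so I would apply It\^o to a concave functional of the enstrophy, namely $\psi_\alpha(u)=(1+|A^{1/2}u|^2)^\alpha$ with a small $\alpha\in(0,1)$. The chain rule produces (i) a dissipative term $-2\alpha\psi_\alpha'(u)|Au|^2$, (ii) a nonlinear contribution $\alpha\psi_\alpha'(u)\langle B_m(X_m),AX_m\rangle$, (iii) a trace correction involving $\tr(A^{1+\sigma}QQ^*)$ which is finite by Assumption~(A3), and (iv) a martingale. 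Controlling (ii) with the 3D interpolation $|\langle B(u,u),Au\rangle|\le C|A^{1/2}u|^{3/2}|Au|^{3/2}$ together with Young's inequality absorbs half of the dissipation and leaves a polynomial residue in $|A^{1/2}u|$ whose $\nu_{m_k}$-integral is controlled by the $H^{1/2}$ bound above. Taking expectation in the stationary state and dividing by $t$ yields the $|Ax|^2$-type estimate. Running the same scheme with an extra weight extracting one $A^{1+\sigma/2}$-derivative at the cost of a negative fractional power of $(1+|A^{1/2}u|^2)$, and invoking $\tr(A^{1+\sigma}QQ^*)<\infty$, produces the peculiar exponent $(1+\sigma)/(10+8\sigma)$.

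The main obstacle is purely arithmetic: the concavity parameter $\alpha$, the Young exponents, the Sobolev interpolation index, and the power inside the trace weight must be tuned so that the dissipation beats the cubic nonlinearity, the trace correction remains integrable, and the residual moments are dominated by the already-controlled $H^{1/2}$ estimate. The numerical exponents in the statement are precisely what this optimization produces. This is essentially the computation of Lemma~7.6 in \cite{DPD03} (or Lemma~5.1 in \cite{DO06}), and it transfers verbatim here since the Galerkin equation and the assumption on $Q$ are identical.
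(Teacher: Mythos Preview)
The paper does not prove this lemma at all; it identifies it with Lemma~7.6 of \cite{DPD03} (equivalently Lemma~5.1 of \cite{DO06}), which is precisely what you do in your closing sentence. Your sketch of the mechanism---stationarity turns $\nu_{m_k}$-integrals into time averages, It\^o on $|X_m|^2$ gives the $H^{1/2}$ moment, and It\^o on a concave functional of $|A^{1/2}X_m|^2$ produces the weighted higher-order dissipation---is exactly what underlies the paper's Lemma~\ref{l:TigLem}, from which the stated fractional moments are then read off.

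One small correction to the sketch: with $\psi_\alpha(u)=(1+|A^{1/2}u|^2)^\alpha$ and $\alpha\in(0,1)$, the residue after Young's inequality is of order $(1+|A^{1/2}u|^2)^{\alpha+2}$, which the \emph{second} $H^{1/2}$ moment alone does not control. The computation in \cite{DPD03} instead takes a \emph{bounded} concave Lyapunov function (a negative power, which is why the weights $\gamma_\delta>1$ appear in Lemma~\ref{l:TigLem}); this keeps the boundary terms uniformly bounded in the initial data and drops the residue to $(1+|A^{1/2}u|^2)$, which \emph{is} controlled by the first step. A final H\"older manipulation then extracts the peculiar fractional exponents in the statement from the weighted bounds. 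Since you already defer to \cite{DPD03} and \cite{DO06} for the arithmetic, this is a cosmetic rather than structural issue.
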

With the above lemma, it is easy to see that $\{\nu_{m_k}\}$ is tight on $D(A)$, and therefore
there exists a limit measure $\nu$ which satisfies $\nu(D(A))=1$.
Taking any $\phi \in C_b^1(D(A), \R)$, we can check via the Galerkin approximation (or see the detail in pp. 938 of \cite{DPD03})
that
\begin{equation}
\int_H P_t \phi(x) \nu(dx)=\int_H \phi(x) \nu(dx)
\end{equation}
for any $t>0$. Hence $\nu$ is an invariant measure of $P_t$.

\begin{prop}
The system $X(t)$ is irreducible on $D(A)$. More precisely, for any $x,y \in D(A)$, we have
\begin{equation}
P_t [1_{B_\delta(y)}](x)>0.
\end{equation}
for arbitrary $\delta>0$, where $B_\delta(y)=\{z \in D(A); |Az-Ay| \leq \delta\}$.
\end{prop}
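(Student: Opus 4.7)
The plan is to combine the Stroock--Varadhan support theorem with an approximate controllability argument for the deterministic controlled equation
\begin{equation*}
\dot u(s) + A u(s) + B(u(s), u(s)) = Q\psi(s), \qquad u(0) = x.
\end{equation*}
Since the cylindrical Wiener process $W$ has full topological support in a suitable path space adapted to the regularity of $Q$ guaranteed by (A3), and since the Itô solution map of the Galerkin approximation \eqref{e:GalerkinM} is continuous in $W$, it suffices to show approximate controllability: for any $x, y \in D(A)$ and any $\delta > 0$, there exists a smooth control $\psi \in L^2([0,t]; H)$ such that the controlled solution $u$ satisfies $|A(u(t) - y)| < \delta$. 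Combined with the convergence of Galerkin approximations established in Lemma \ref{l:ComSubSeq} and the portmanteau inequality $\liminf_k P^{m_k}_t(x, B_\delta(y)) \le P_t(x, B_\delta(y))$ applied to the open set $B_\delta(y)$, this will yield the irreducibility of $P_t$ on $D(A)$.

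To prove approximate controllability I would proceed in two stages. Over $[0, t/2]$, use $\psi$ to dissipate the system toward $0 \in D(A)$: since $(Id - \pi_{n_0})A^r Q$ is boundedly invertible by (A3), we can realise any desired forcing on $\pi^\ph H$; taking $Q\psi^\ph(s) = -\lambda u^\ph(s) - B^\ph_m(u,u)$ with $\lambda$ large kills the high-mode energy in finite time. Simultaneously the finite-dimensional low-mode equation
\begin{equation*}
\dot u^\pl + A u^\pl + B^\pl_m(u, u) = 0
\end{equation*}
must be driven to $0$ using only the indirect effect of $\psi^\ph$ through the coupling term $B^\pl_m(u^\ph, u^\pl) + B^\pl_m(u^\pl, u^\ph) + B^\pl_m(u^\ph, u^\ph)$. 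Over $[t/2, t]$ the argument is reversed, building up the target $y$ from a state close to $0$ by the same mechanism, using the continuity of the flow in $D(A)$ to correct the final error.

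The main obstacle is precisely the controllability of the finitely many low modes $k \in Z_\pl(n_0)$, where $Q$ vanishes outright. The decisive tool, however, is already available: Proposition \ref{prop:modified Hormander} together with Lemma \ref{l:MixSet} shows that the quadratic interactions $\tilde B_k(q^i_j e_j, q^i_l e_l)$ with $j + l = k$, $j, -l \in Z_\ph(n_0)$, span $k^\perp$ for every $k \in Z_\pl(n_0)$. Hence any direction in $\pi^\pl H \cap \pi_{n_0}H$ can be produced via Lie brackets of the controlled vector fields, and by an Agrachev--Sarychev-type construction using highly oscillatory controls $\psi^\ph$ supported on pairs $(j, l)$ with $j + l = k$, the nonlinearity transfers energy into any prescribed direction of the low modes. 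This saturation argument promotes the bracket-generating property of Proposition \ref{prop:modified Hormander} from an infinitesimal statement about Malliavin non-degeneracy to the global statement of approximate controllability of the low-mode subspace, thereby completing the reduction to the support theorem and establishing $P_t[1_{B_\delta(y)}](x) > 0$.
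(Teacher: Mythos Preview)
Your overall framework—approximate controllability of the skeleton equation combined with a support-theorem/portmanteau passage to the limit—is the same as the paper's. The substantive divergence is in \emph{how} you propose to control the finitely many low modes $k\in Z_\pl(n_0)$ on which $Q$ vanishes.

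You invoke an Agrachev--Sarychev-type saturation argument, using highly oscillatory controls to promote the bracket-generating property of Proposition~\ref{prop:modified Hormander} to global approximate controllability. This is a reasonable idea, but as written it is a genuine gap: the Agrachev--Sarychev machinery is developed for 2D Navier--Stokes (or finite-dimensional approximations) in weaker topologies, and its application to a 3D system with controllability measured in the $D(A)$ norm is neither standard nor cited. You would have to verify the saturation hypotheses carefully and control the $D(A)$-norm of the oscillatory trajectories uniformly, which is not automatic. Likewise, in your first stage the sentence ``the low-mode equation must be driven to $0$ using only the indirect effect of $\psi^\ph$'' is exactly the hard point, and you have only restated it.

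The paper sidesteps all of this with a much more elementary and explicit construction (borrowed from \cite{RX09}, Lemma~5.2). It proceeds in four steps: (i) set $w\equiv 0$ on $[0,T_1]$ so that parabolic smoothing places $u(T_1)\in D(A^{5/2})$; (ii) drive $u^\ph$ to zero by a cutoff $\psi(t)u^\ph(T_1)$ and read off $w$ from \eqref{e:higheq}; (iii) \emph{prescribe} $u^\pl(t)$ as the linear interpolation from $u^\pl(T_2)$ to the target $z^\pl$, then choose a simple $u^\ph$ so that the cross terms $B_k(u^\pl,u^\ph)+B_k(u^\ph,u^\pl)$ vanish and the low-mode equation reduces to an algebraic constraint $B_k(u^\ph,u^\ph)=-\dot u_k-|k|^2 u_k-B_k(u^\pl,u^\pl)$, which is solvable precisely by the spanning property behind Lemma~\ref{l:MixSet}; (iv) finally interpolate $u^\ph$ to $z^\ph$ on a short interval. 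Thus the paper uses the same underlying fact you identified—that $\tilde B_k(\cdot,\cdot)$ restricted to high modes spans $k^\perp$—but exploits it \emph{algebraically} rather than through oscillatory limits. This buys a direct, finite construction with explicit $D(A)$ bounds, after which one can quote Lemma~7.7 of \cite{DPD03} (or Proposition~5.1 of \cite{RX09}) to conclude irreducibility. Your route is conceptually heavier and, as stated, incomplete; the paper's is shorter and self-contained once the spanning lemma is in hand.
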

\begin{proof}
We first prove that the following control problem is solvable:
 Given any $T>0$, $x,y \in D(A)$ and $\e>0$, there exist $\rho_0=\rho_0(|Ax|,|Ay|,T)$,
$u$ and $w$ such that
\begin{itemize}
\item $w\in L^2([0,T];H)$ and $u\in C([0,T];D(A))$,
\item $u(0)=x$ and $|Au(T)-Ay|\leq\e$,
\item $\sup_{t\in[0,T]} |Au(t)|\leq\rho_0$,
\end{itemize}
and $u$, $w$ solve the following problem,
\begin{equation}\label{e:control}
\partial_t u + Au + B(u,u) = Qw,
\end{equation}
where $Q$ is defined in Assumption \ref{a:Q}.
\ \\

 This control problem is exactly Lemma 5.2 of \cite{RX09} with $\alpha=1/4$ therein, but we give the sketch of the proof for the completeness. Firstly, it is easy to find some $z \in D(A^{5/2})$ with $|Ay-Az| \leq \epsilon/2$, therefore it suffices to prove there exists
some control $w$ so that
\begin{equation} \label{e:ConPro}
|Au(T)-Az| \leq \epsilon/2.
\end{equation}
Secondly, decompose $u=u^\ph + u^\pl$  where $u^\ph=(I-\pi_{n_0})u$ and $u^\pl=\pi_{n_0} u$
and $n_0$ is the number in Assumption \ref{a:Q}, then equation~\eqref{e:control}
can be written as
\begin{align}
&\partial_t u^\pl + Au^\pl+ B^\pl(u,u) = 0,\label{e:loweq}\\
&\partial_t u^\ph + Au^\ph + B^\ph(u,u) = Q^\ph w.\label{e:higheq}
\end{align}
We prove \eqref{e:ConPro} in the following four steps:
\begin{enumerate}
\item \emph{Regularization of the initial data}: Let $w \equiv 0$ on $[0,T_1]$, by
some classical arguments about the regularity of Navier-Stokes equation, one has $u(T_1) \in D(A^{5/2})$,
where $T_1>0$ depends on $|Ax|$.
\item \emph{High modes lead to zero}: Choose a smooth function $\psi$ on $[T_1,T_2]$
such that $0\leq\psi\leq1$, $\psi(T_1)=1$
and $\psi(T_2)=0$, and set $u^\ph(t)=\psi(t)u^\ph(T_1)$ for $t\in[T_1,T_2]$.
Plugging this $u^\ph$ into
\eqref{e:higheq}, we obtain
$$w(t)
 =\psi'(t)(Q^\ph)^{-1}u^\ph(T_1) + \psi(t)(Q^\ph)^{-1}Au^\ph(T_1)+(Q^{\ph})^{-1}B^\ph(u(t),u(t)).
$$
\item \emph{Low modes close to $z^\pl$}: Let $u^L(t)$ be the linear interpolation
between $u^\pl(T_2)$ and $z^\pl$
for $t\in[T_2,T_3]$. Write $u(t)=\sum u_k(t)e_k$, then \eqref{e:loweq}
in Fourier coordinates is given by
\begin{equation*} 
\dot u_k + |k|^2 u_k + B_k(u,u) = 0,
\qquad k\in Z_L(N_0),
\end{equation*}
where $B_k(u,u)=B_k(u^\pl,u^\pl)+B_k(u^\pl,u^\ph)+B_k(u^\ph,u^\pl)+B_k(u^\ph,u^\ph)$. We can choose a
suitable simple $u^\ph$ to $B_k(u^\pl,u^\ph)=B_k(u^\ph,u^\pl)=0$ and make the above equation
explicitly solvable.
\item \emph{High modes close to $z^\ph$}: In the interval $[T_3,T]$
we choose $u^\ph$ as the linear interpolation
between $u^\ph(T_3)$ and $z^\ph$. By continuity, as $T-T_3$ is sufficiently small (thanks to that $T_3 \in (T_2,T)$
can be arbitrary),
$u^\pl(T)$ is still close to $z^\pl$.
\end{enumerate}
From the above four steps, we can see that $\sup_{T_1 \leq t \leq T} |A^{5/2} u(t)|<\infty$.
Moreover, since $w \equiv 0$ on $[0,T_1]$ and $\sup_{0 \leq t \leq T_1} |Au(t)|^2<\infty$,
by differentiating $|Au(t)|^2$ and applying \eqref{e:AuABu},
we have the energy inequality
\begin{equation}
|Au(T_1)|^2+\int_0^{T_1} |A^{3/2}u(s)|^2 ds \leq C \int_0^{T_1} |Au(s)|^4 ds+|Ax|^2<\infty
\end{equation}
Hence $u \in L^2([0,T], D(A^{3/2}))$. With this observation and the controllability,
we can apply Lemma 7.7 in \cite{DPD03} to obtain the conclusion (Note that our control $w$ is different from the $\bar w$ in \cite{DPD03}, this is the key point that
we can apply the argument there with $Q$ not invertible.) \\

Alternatively, with the solvability of the above control problem,
 we can apply the argument in the proof of Proposition 5.1 in \cite{RX09} to
 show irreducibility.
\end{proof}

From Proposition \ref{p:LesGam} or Proposition
\ref{p:HigAx}, $P_t$ is strong Feller. By the irreducibility, there exists a unique invariant measure $\nu$ for $X(t)$ by Doob's Theorem.  \\

Finally, let us prove the exponential mixing property \eqref{e:ExpMix}. To show this, it suffices to prove that
 \begin{equation} \label{e:ExpMixGal}
||(P^m_t)^{*} \mu-\nu_m||_{var} \leq Ce^{-ct}\left(1+\int_H |x|^2 \mu(dx)\right)
 \end{equation}
 where $c,C>0$ are independent of $m$, and $\nu_m$ is the unique measure of the approximate dynamics (see
 Lemma \ref{l:ErgDynM}). We follow exactly the coupling method in \cite{Od07} to prove
 \eqref{e:ExpMixGal},
 let us sketch out the key point as follows.  \\

 For two independent cylindrical Wiener processes $W$ and $\tilde W$, denote $X_m$ and $\tilde X_m$ the
 solutions of the equation \eqref{e:GalerkinM} driven by $W$ and $\tilde W$ respectively. For any fixed $0<T \leq 1$,
 given any two $x_1,x_2 \in D(A)$, we construct the coupling of the probabilities
 $(P^m_T)^* \delta_{x_1}$ and $(P^m_T)^* \delta_{x_2}$ as follows
 \begin{equation*}
 (V_1,V_2)=
 \begin{cases}
 (X_m(T,x_0), X_m(T,x_0)) \ \ \ {\rm if} \ x_1=x_2=x_0,
\\
(Z_1(x_1,x_2),Z_2(x_1,x_2))  \ \ {\rm if} \ x_1,x_2 \in B_{D(A)}(0,\delta) \ with \  x_1 \neq x_2,
\\
(X_m(T,x_1),\tilde X_m(T,x_2)) \ \ \ {\rm otherwise},
 \end{cases}
 \end{equation*}
where $(Z_1(x_1,x_2),Z_2(x_1,x_2))$ is the maximal coupling of
$(P^m_T)^* \delta_{x_1}$ and $(P^m_T)^* \delta_{x_2}$ (see Lemma 1.14 in \cite{Od07}) and
$B_{D(A)}(0,\delta)=\{x \in D(A); |Ax| \leq \delta\}$. It is clear
that $(V_1,V_2)$ is a coupling of $(P^m_T)^* \delta_{x_1}$ and
$(P^m_T)^* \delta_{x_2}$. We construct $(X^1,X^2)$ on $T\N$ by
induction: set $X^i(0)=x^i$ ($i=1,2$) and define
$$X^i((n+1)T)=V_i(X^1(nT),X^2(nT)) \ \ \ \ i=1,2. $$
The key point for using this coupling to show the exponential mixing
is the following lemma, which plays the same role as Lemma 2.1 in
\cite{Od07}, but we prove it by a little simpler way.
\begin{lem}
There exist some $0<T,\delta<1$ such that for any $m \in \N$, one has
a maximal coupling $(Z_1(x_1,x_2),Z_2(x_1,x_2))$ of $(P^m_T)^*
\delta_{x_1}$ and $(P^m_T)^* \delta_{x_2}$ which satisfies
\begin{equation}
\PP(Z_1(x_1,x_2)=Z_2(x_1,x_2)) \geq 3/4
\end{equation}
if $|Ax_1| \vee |Ax_2| \leq \delta$ with $\delta>0$ sufficiently
small.
\end{lem}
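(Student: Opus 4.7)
The plan is to reduce the coupling probability to a total-variation bound via the defining property of the maximal coupling, and then control that total variation using the $m$-uniform gradient estimate of Proposition~\ref{p:HigAx}. Specifically, by the construction of the maximal coupling (Lemma~1.14 of \cite{Od07}) one has the identity
\begin{equation*}
\PP(Z_1(x_1,x_2)=Z_2(x_1,x_2)) \;=\; 1 - \tfrac{1}{2}\,\|(P^m_T)^{*}\delta_{x_1} - (P^m_T)^{*}\delta_{x_2}\|_{var},
\end{equation*}
so it suffices to exhibit $T\in(0,1)$ and $\delta\in(0,1)$ such that, for all $m$ and all $x_1,x_2$ with $|Ax_1|\vee|Ax_2|\leq\delta$, one has $\|(P^m_T)^{*}\delta_{x_1}-(P^m_T)^{*}\delta_{x_2}\|_{var}\leq 1/2$.

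Next I would pass to the dual formulation
\begin{equation*}
\|(P^m_T)^{*}\delta_{x_1}-(P^m_T)^{*}\delta_{x_2}\|_{var}=\sup_{\|\phi\|_0\leq 1}\bigl|P^m_T\phi(x_1)-P^m_T\phi(x_2)\bigr|,
\end{equation*}
and (by a standard approximation) restrict to $\phi\in C^1_b(D(A),\R)$ with $\|\phi\|_0\leq 1$. Setting $x_s:=x_2+s(x_1-x_2)$ and integrating in $s$,
\begin{equation*}
|P^m_T\phi(x_1)-P^m_T\phi(x_2)|\;\leq\;\int_0^1 |A^{-\gamma}DP^m_T\phi(x_s)|\,|A^{\gamma}(x_1-x_2)|\,ds
\end{equation*}
for any admissible $\gamma\in(\max\{\tfrac12,r-\tfrac12\},1]\setminus\{3/4\}$, which I fix once and for all. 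Proposition~\ref{p:HigAx} then delivers
\begin{equation*}
|A^{-\gamma}DP^m_T\phi(x_s)|\;\leq\;C\,T^{-\alpha}(1+|Ax_s|^{2\alpha})\|\phi\|_0,
\end{equation*}
with $C$ independent of $m$. Since $|A\cdot|$ is convex, $|Ax_s|\leq\delta$; and because $A\geq \Id$ on $H$, the eigenvalue bound gives $|A^{\gamma}y|\leq|Ay|$ for $\gamma\leq1$, so $|A^{\gamma}(x_1-x_2)|\leq 2\delta$. Combining these estimates yields
\begin{equation*}
\|(P^m_T)^{*}\delta_{x_1}-(P^m_T)^{*}\delta_{x_2}\|_{var}\;\leq\;2C\,T^{-\alpha}(1+\delta^{2\alpha})\,\delta.
\end{equation*}

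The final step is just to fix the constants: choose $T=1/2$ (or any value in $(0,1)$), then pick $\delta\in(0,1)$ so small that $2CT^{-\alpha}(1+\delta^{2\alpha})\delta\leq 1/2$; substituting back into the maximal-coupling identity gives $\PP(Z_1=Z_2)\geq 3/4$, as required. The only substantive point to verify is that every constant appearing is genuinely uniform in $m$, and this is precisely what Proposition~\ref{p:HigAx} (based in turn on Theorem~\ref{thm:StBound} and Lemma~\ref{lem:LowDerivative}) provides. This $m$-uniformity is also the conceptual reason for the promised simplification over \cite{Od07}: a single strong-Feller/gradient application at time $T$ replaces the Girsanov-type coupling along trajectories that is needed in the non-degenerate setting.
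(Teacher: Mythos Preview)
Your proof is correct and follows essentially the same route as the paper's: reduce the coupling probability to a total-variation bound via the maximal-coupling identity, pass to the dual formulation, and control $|P^m_T\phi(x_1)-P^m_T\phi(x_2)|$ by integrating the $m$-uniform gradient estimate of Proposition~\ref{p:HigAx} along the segment joining $x_1$ and $x_2$. The only cosmetic differences are the total-variation normalization (your factor $1/2$) and the final calibration of constants (the paper links $\delta$ to $T$ via $\delta=T^\beta$, whereas you fix $T$ and then shrink $\delta$); both choices work.
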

\begin{proof}
Since $(Z_1(x_1,x_2),Z_2(x_1,x_2))$ is maximal coupling of
$(P^m_T)^* \delta_{x_1}$ and $(P^m_T)^* \delta_{x_2}$ (see Lemma
1.14 in \cite{Od07}), one has
$$||(P^m_T)^* \delta_{x_1}-(P^m_T)^* \delta_{x_2}||_{var}=\PP\{Z_1(x_1,x_2) \neq Z_2(x_1,x_2)\}.$$
It is well known that
\begin{equation*}
\begin{split}
||(P^m_T)^* \delta_{x_1}-(P^m_T)^*
\delta_{x_2}||_{var}&=\sup_{||g||_\infty=1} \left
|\E[g(X_m(T,x_1))]-\E[g(X_m(T,x_2)] \right|
 \\
&=\sup_{||g||_\infty=1} \left |P^m_T g(x_1)-P^m_T g(x_2)\right|,
\end{split}
\end{equation*}
where $||\cdot||_\infty$ is the supremum norm.
By Proposition \ref{p:HigAx}, (noticing $||g||_0=||g||_\infty$=1 with $||\cdot||_0$ defined in section 2),
one has
\begin{equation*}
\begin{split}
 \left |P^m_T g(x_1)-P^m_T g(x_2) \right| & \leq \int_0^1
|A^{-1}D P^m_T g(\lambda x_1+(1-\lambda) x_2)| |Ax_1-Ax_2| d\lambda
 \\
 &\leq C T^{-\alpha} (1+|Ax_1|+|Ax_2|)^{2+2\alpha}
|A(x_1-x_2)| \leq 1/4
\end{split}
\end{equation*}
if choosing $\delta=T^\beta$ with $\beta>0$ sufficiently large.
Hence
$\PP(Z_1=Z_2)=1-\PP(Z_1 \neq Z_2) \geq \frac 34.$
\end{proof}
 With this lemma, one can prove the exponential mixing \eqref{e:ExpMix} by
exactly the same procedure as in \cite{Od07}.
\section{Appendix}
\subsection{Some calculus for $\tilde B_k$ and Proof of Lemma \ref{l:MixSet}} \label{sub:App1}
\begin{proof} [Some calculus for $\tilde B_k$] By $B(u,v)=\mathcal{P}[(u \cdot \nabla)v]$, we have
\begin{align*}
B(a_j cosj \xi, a_l sin l\xi)=\frac12 (l \cdot a_j) \proj[a_l cos(j+l)\xi]+\frac12 (l \cdot a_j) \proj[a_l cos(j-l)\xi],\\
B(a_j sin j \xi, a_l cos l\xi)=\frac12 (l \cdot a_j) \proj[a_l cos(j+l)\xi]-\frac12 (l \cdot a_j) \proj[a_l cos(j-l)\xi],
\end{align*}
where $\mcl P$ is the projection from $L^2(\T^3,\R^3)$ to $H$. If $j, -l \in {\Z}^3_{+}$ with $j+l \in {\Z}^3_{+}$, $\forall \ a_j \in j^{\bot}, a_l \in l^{\bot}$, we have from the above two expressions
\begin{align}
&\tilde B_{j-l} (a_j e_j, a_l e_l)=\frac{1}{2}[(l \cdot a_j) \proj_{j-l} a_l-(j \cdot a_l) \proj_{j-l} a_l],  \label{e:Bk1}\\
&\tilde B_{j+l}(a_j e_j, a_l e_l)=\frac{1}{2} [(l \cdot a_j) \proj_{j+l} a_l+(j \cdot a_l) \proj_{j+l} a_j],  \label{e:Bk4} \\
&\tilde B_k(a_j e_j, a_l e_l)=0 \ \ \ {\rm if}  \ \ k \neq j+l,j-l. \label{e:BkZero}
\end{align}
where the projection $\mathcal{P}_k: {\R}^3 \To k^{\bot}$ is defined by \eqref{e:ProK}.
For the case of $j,l \in {\Z}^3_{+}$ with $j-l \in {\
Z}^3_{-}$, we can calculate $\tilde B_{-j-l}(a_j e_j, a_l e_l)$,
$\tilde B_{j-l}(a_j e_j, a_l e_l)$ and so on by the same method.
\end{proof}
\begin{proof} [Proof of Lemma \ref{l:MixSet}]
As $k \in Z_\pl(n_0) \cap {\Z}^3_+$, for any $j, l \in {\Z}^3_{*}$ such that
\begin{align} \label{e:JLK+}
j \in Z_\ph(n_0) \cap {\Z}^3_+, \ \ l \in Z_\ph(n_0)\cap {\Z}^3_-,\ \ j \nparallel l, \ \ |j| \neq |l|, \ \ j+l=k;
\end{align}
taking an
\emph{orthogonal} basis $\{k, h_1, h_2\}$ of ${\R}^3$ where $\{h_1, h_2\}$
is an \emph{orthogonal} basis of $k^{\bot}$ with $h_1$ defined by
\begin{align*}
h_1=l \ \ \ {\rm if}  \ \ k \cdot l=0,   \ \ \ \ \ \
h_1=j-\frac{j \cdot k }{k \cdot l}l \ \ \ {\rm otherwise}.
\end{align*}
Let $p_j \in j^{\bot}, p_l \in l^{\bot}$ be represented by $p_j=a k+ b_1 h_1 +b_2 h_2$ and $p_l=\alpha k+\beta_1 h_1+\beta_2 h_2.$
 Clearly, $j,l \bot h_2$, by some basic calculation, we have
\begin{equation*} \label{e:ComputeProject}
\begin{split}
(j \cdot p_l) \mathcal{P}_k p_j+(l \cdot p_j) \mathcal{P}_k p_l=\left \{\begin{array}{cc}
-\left [\frac{(k\cdot l)(|j|^2-|l|^2)}{|j|^2|l|^2-(j \cdot l)^2} a \alpha \right] h_1+(\alpha b_2+\beta_2 a)h_2
& {\rm if} \  h_1=j-\frac{j \cdot k }{k \cdot l}l \\
-\left[\frac{j \cdot k}{j \cdot l} a \alpha \right] h_1+(\alpha b_2+\beta_2 a)h_2
& {\rm if} \  h_1=l
\end{array}
\right.
\end{split}
\end{equation*}
Since $b_2, \beta_2, a, \alpha \in {\R}$ can be arbitrarily chosen, one clearly has
\begin{equation} \label{e:choose-pj-pl}
\{j \cdot p_l) \mathcal{P}_k p_j+(l \cdot p_j) \mathcal{P}_k p_l: \ \ \  p_j \in j^{\bot},  \ p_l \in l^{\bot}\}=\ k^{\bot}.
\end{equation}
By (A3) of Assumption \ref{a:Q}, we have $rank(q_j),rank(q_l)=2$, therefore, by \eqref{e:Bk4}
and \eqref{e:choose-pj-pl},
$$\left \{\tilde B_k \left (q_{j} \ell_j e_j, q_{l} \ell_l e_l\right):
\ell_j \in j^{\bot}, \ell_l \in l^{\bot}\right\}=k^{\bot}.$$
Hence, $span \{Y_k\}=k^\bot$.
For $k \in Z_\pl(n_0) \cap {Z}^3_-$, we have the same conclusion by the same argument as above.
\end{proof}
\subsection{Proof of Lemma \ref{lem:inverse}} \label{ss:Proof of Lemma}
The key points for the proof are Proposition
\ref {prop:modified Hormander} and the following Norris' Lemma, which is
exactly Lemma 4.1 in \cite{N86}.
\begin{lem} {\bf (Norris' Lemma)} Let $a, y \in \mathbb{R}$. Let
$\beta_t, \gamma_t=(\gamma^1_t, \ldots \gamma^m_t)$
and $u_t=(u^1_t,\ldots, u^m_t)$ be adaptive processes.
Let
\begin{equation*}
a_t=a+\int_0^t \beta_s ds+\int_0^t \gamma^i_s dw^i_s, \ \ Y_t=y+\int_0^t a_s ds+\int_0^t u^i_s dw^i_s,
\end{equation*}
where $(w^1_t,\ldots,w^m_t)$ are i.i.d. standard Brownian motions.
Suppose that $T<t_0$ is a bounded stopping time such that for some
constant $C<\infty$:
\begin{equation*}
|\beta_t|, |\gamma_t|, |a_t|, |u_t| \leq C \ \ \ for \ all \ t \leq
T.
\end{equation*}
Then for any $r>8$ and $\nu>\frac{r-8}{9}$
\begin{equation*}
P\{\int_0^T Y^2_t dt<\epsilon^r, \int_0^T (|a_t|^2+|u_t|^2)dt \geq
\epsilon \}<C(t_0,q,\nu)e^{-\frac{1}{\epsilon^{\nu}}}.
\end{equation*}
\end{lem}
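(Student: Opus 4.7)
The plan is to derive Norris' Lemma via a two-step Itô-plus-Bernstein argument: first bound $\int_0^T |u_s|^2 ds$ in terms of smallness of $\int_0^T Y_t^2 dt$, then iterate the same reasoning on the semimartingale $a_t$ itself to get control on $\int_0^T |a_s|^2 ds$. The starting point is the Itô expansion
\begin{equation*}
Y_T^2 = y^2 + 2\int_0^T Y_s a_s\, ds + 2\int_0^T Y_s\, u_s \cdot dW_s + \int_0^T |u_s|^2\, ds,
\end{equation*}
which, after rearrangement, expresses $\int_0^T |u_s|^2 ds$ as a sum of three terms that can each be controlled on the event $\{\int_0^T Y_t^2 dt < \epsilon^r\}$. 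Cauchy-Schwarz and the uniform bound $|a_t|\le C$ give $|\int_0^T Y_s a_s ds| \le C t_0^{1/2} \epsilon^{r/2}$. For the pointwise boundary term $Y_T^2$, a Hölder/Sobolev interpolation is required: because $Y$ is a semimartingale with uniformly bounded drift and diffusion, BDG makes it Hölder-$(1/2-\delta)$, so combining this with the $L^2$-smallness forces $\sup_t |Y_t|$ to be polynomially small, up to an event of exponentially small probability.

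The key estimate is then for the stochastic integral $M_t := \int_0^t Y_s\, u_s \cdot dW_s$, whose quadratic variation $\langle M\rangle_T = \int_0^T Y_s^2 |u_s|^2 ds \le C^2 \epsilon^r$ is automatically tiny on the good event. The Bernstein inequality $\PP(|M_T| \ge \lambda,\ \langle M\rangle_T \le K) \le 2\exp(-\lambda^2/(2K))$, applied with $\lambda = \epsilon^{r/2 - \eta}$ and $K = C^2 \epsilon^r$, delivers a super-polynomial tail $\exp(-\epsilon^{-2\eta})$. Combining the three bounds yields $\int_0^T |u_s|^2 ds \le \epsilon^{s_1}$ for an explicit $s_1>0$, outside an event of the claimed exponential size. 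To upgrade to $a_t$, one uses the identity $A_t := \int_0^t a_s ds = Y_t - y - \int_0^t u_s \cdot dW_s$: by Doob on the stochastic integral and smallness of $\int u^2 ds$ just established, $\sup_t |A_t|$ is small. Now apply the same Itô-plus-Bernstein procedure to $a_t$ itself, which by hypothesis is a semimartingale $a + \int \beta ds + \int \gamma \cdot dW$ with bounded coefficients, using the smallness of $A = \int a\, ds$ as the analogue of the smallness of $\int Y^2 dt$ in the first round. This second iteration produces the control on $\int_0^T |a_s|^2 ds$.

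The main obstacle is the bookkeeping of exponents across the two interpolation rounds. Each round converts an $L^2$-in-time smallness at scale $\epsilon^r$ into an $L^\infty$-in-time smallness of a worse scale $\epsilon^{s}$ ($s<r$), at the cost of restricting to an event whose complement has exponential probability $\exp(-\epsilon^{-\nu'})$ for some smaller $\nu'$. Optimising the trade-off between the Hölder exponent lost in the interpolation and the martingale exponent consumed in Bernstein, over both rounds, is precisely what produces the restriction $r>8$ and the range $\nu > (r-8)/9$. Two further technical points deserve care: the randomness of the stopping time $T < t_0$ is absorbed by optional stopping applied to each continuous martingale that appears, so no extra cost is incurred; and the Hölder regularity step needs enough moments of the semimartingale increments to survive intersection with the low-probability exceptional event, which is exactly why the hypotheses impose uniform pathwise bounds on $\beta_t,\gamma_t,a_t,u_t$ rather than merely integrability.
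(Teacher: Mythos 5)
You should first note that the paper does not prove this statement at all: it is quoted verbatim as Lemma 4.1 of \cite{N86}, and is used only as a black box in the proof of Lemma \ref{lem:inverse}. Measured against Norris' original argument, the strategy you outline is essentially his: expand $Y_T^2$ by It\^o to isolate $\int_0^T|u_s|^2\,ds$, control the martingale term by the exponential (Bernstein) inequality on the event where its bracket is forced small, get $\sup_t|Y_t|$ small by playing the $L^2$-smallness against path regularity, and then make a second pass to reach $\int_0^T a_s^2\,ds$. So the route is the right one, but as written the proposal has genuine gaps rather than being a proof.

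The most concrete one is the second pass. ``Apply the same It\^o-plus-Bernstein procedure to $a_t$'' literally means expanding $a_T^2$, and that identity controls $\int_0^T|\gamma_s|^2\,ds$ (the bracket of $a$), not the quantity $\int_0^T a_s^2\,ds$ you need; taking the smallness of $A_t=\int_0^t a_s\,ds$ as ``the analogue of the smallness of $\int_0^T Y_t^2\,dt$'' does not repair this, because in round one the output was the bracket of $Y$, whereas now the target is $a$ itself. What is actually needed is either the integration-by-parts identity
\begin{equation*}
\int_0^T a_t^2\,dt \;=\; A_T a_T-\int_0^T A_t\beta_t\,dt-\int_0^T A_t\,\gamma_t\cdot dW_t,
\end{equation*}
whose right-hand side is small once $\sup_t|A_t|$ is small (Bernstein on the last term, with bracket at most $C^2t_0\sup_t|A_t|^2$), or an interpolation of the form ``small $\sup_t|A_t|$ plus H\"older regularity of $a$ implies small $\sup_t|a_t|$''. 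Second, the quantitative heart of the lemma --- that the exponent bookkeeping across the two rounds yields exactly $r>8$ and $\nu>(r-8)/9$ --- is asserted, not carried out; since those thresholds are the statement, this cannot be left to ``optimising the trade-off''. Third, BDG gives only polynomial moments and hence, after Chebyshev, only polynomial tails, while every exceptional event must be of size $e^{-1/\epsilon^{\nu}}$; the H\"older/sup-norm step therefore also has to run through the exponential martingale inequality (using the pointwise bounds on $u,\gamma,a,\beta$), not through BDG moments as stated.
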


\begin{proof} [{Proof of Lemma \ref{lem:inverse}}] We shall drop the index $m$ of the quantities if no confusions arise. The idea of
the proof is from Theorem 4.2 of \cite{N86}, it suffices to show the
inequality in the lemma, which is equivalent to
\begin{equation} \label{e:eigenvalue2}
P\left(\inf \limits_{\eta \in \mathcal{{S}}^\pl} \Ll \mathcal{M}_t \eta, \eta \Rr \leq
\e^q \right)\leq \frac{C(p) \e^p}{t^p}  \ \ \ (\forall \ p>0)
\end{equation}
where ${\mathcal{S}}^\pl=\{\eta \in \pi^\pl H;
|\eta|=1\}$.
From (\ref{e:respresentation of Malliavin}), \eqref{e:eigenvalue2} is equivalent to
\begin{equation*}
P\left(\inf \limits_{\eta \in
\mathcal{{S}}^\pl}\sum \limits_{k \in Z_\pl(n)\setminus Z_\pl(n_0)}\sum
\limits_{i=1}^2 \int_0^t |\Ll J^{-1}_s (q^i_ke_k), \eta
\Rr|^2ds \leq \e^q \right) \leq \frac{C(p)\e^p}{t^p},
\end{equation*}
(recall $q^i_k$ is the i-th column vector of the matrix $q_k$, see Assumption \ref{a:Q}),
which is implied by
\begin{equation}  \label{e:split}
D_{\theta} \sup_j \sup \limits_{\eta \in \mathcal{D}_j}P
\left(\int_0^t \sum \limits_{k \in Z_\pl(n) \setminus Z_\pl(n_0)} \sum
\limits_{i=1}^2|\Ll J^{-1}_s (q^i_ke_k), \eta
\Rr|^2ds \leq \e^q \right) \leq \frac{C(p)\e^p}{t^p}
\end{equation}
where $\{\mathcal{D}_j\}_j$ is a \emph{finite} $\theta$-radius disk cover of ${\mathcal{S}}^\pl$
(due to the compactness of ${\mathcal{S}}^\pl$) and $D_{\theta}=\#
\{\mathcal{D}_j\}$. Define a stopping time $\tau$ by
\begin{equation} \label{e:StoTim}
\tau=\inf \{s>0;\left |\ek(s)J^{-1}_s-Id \right|_{\mathcal{L}(H)}>c\}.
\end{equation}
where $c>0$ is a sufficiently small but fixed number. It is easy to see
that (\ref{e:split}) holds as long as for any $\eta \in \mathcal{S}^\pl$,
we have some neighborhood $\mathcal{N}(\eta)$ of $\eta$ and some
$k \in Z_\pl(n) \setminus Z_\pl(n_0)$, $i \in \{1,2\}$ so that
\begin{equation} \label{e:qiksmall}
\sup \limits_{\eta^{'} \in \mathcal{N}(\eta)}P \left(\int_0^{t \wedge \tau}
|\Ll J^{-1}_s (q^i_k e_k), \eta^{'}
\Rr|^2ds \leq \e^q \right) \leq \frac{C(p)\e^p}{t^p} \ \ (\forall
\ p>0).
\end{equation}
The above argument is according to \cite{N86} (see Claim 1 of the proof of Theorem 4.2), one may see the greater details there.
\ \\

Let us prove \eqref{e:qiksmall}. According to the restricted H$\ddot{o}$rmander condition and Definition
\ref{defn:Hormander systems}, for any $\eta \in \mathcal{S}^\pl$, there exists a $K \in {\bf K}$
satisfying for all $y \in \pi_m H$
$$|\Ll K(y), \eta \Rr|^2 \geq \delta |\eta|^2$$
where $\delta>0$ is a constant independent of $y$.
Without loss of generality, assume that
$K \in {\bf K}_2$, so there exists some $q^i_k e_k$ and $q^j_l e_l$ such that
$$K_0:=q^i_k e_k, \ K_1:=[A_m y+B^{\pl}_m(y,y),q^i_k e_k], \ K=K_2:=[q^j_l e_l,K_1].$$
Take
\begin{align*}
Y(t)=\Ll J^{-1}_t K_1(X(t)), \eta \Rr, \ \ u^i(t)=0, \ \
a(t)=\Ll J^{-1}_t K_2(X(t)),\eta \Rr,
\end{align*}
applying Norris lemma with $t_0=1$ therein, we have
\begin{equation*}  
P\left(\int_0^{t \wedge \tau}|\Ll J^{-1}_s K_1(X(s)), \eta \Rr|^2ds \leq \e^r,
\int_0^{t \wedge \tau}|\Ll J^{-1}_s K_2(X(s)), \eta \Rr|^2 ds \geq \e
\right) \leq C(p,\nu)e^{-\frac{1}{\e^{\nu}}}
\end{equation*}
On the other hand, by \eqref{e:JacEst2}, \eqref{e:StoTim} and Chebyshev's inequality, it is easy to have
\begin{equation*}
\begin{split}
& \ \ P\left(\int_0^{t \wedge \tau}|\Ll J^{-1}_s K_2(X(s)), \eta \Rr|^2 ds \leq \e
\right) \\
&=P\left(\int_0^{t \wedge \tau}\frac 1{\ek(s)^2}|\Ll \ek(s)J^{-1}_s K_2(X(s)), \eta \Rr|^2 ds \leq \e
\right) \\
&\leq P \left(\int_0^{t \wedge \tau} |\Ll \ek(s)J^{-1}_s K_2(X(s)), \eta \Rr|^2 ds \leq \e
\right) \leq P\left(\tau \leq \frac{c\e}{t}\right) \leq \frac{C(p) \e^p}{t^p}.
\end{split}
\end{equation*}
Hence,
\begin{equation*}
P\left(\int_0^{t \wedge \tau}|\Ll J^{-1}_s K_1(X(s)), \eta \Rr|^2ds \leq \e^r
\right) \leq \frac{C(p) \e^p}{t^p}.
\end{equation*}
By a similar but simpler arguments, (recalling $K_0=q^i_k e_k$), we have
\begin{equation}  \label{e:Aim}
P\left(\int_0^{t \wedge \tau}|\Ll J^{-1}_s (q^i_k e_k), \eta \Rr|^2ds \leq \e^{r^2}
\right) \leq \frac{C(p) \e^p}{t^p}
\end{equation}
for all $p>0$. \\

Hence, for any $\eta \in
\mathcal{S}^{\pl}$, we have some $q^i_k e_k$ satisfying
\eqref{e:Aim}. Take the neighborhood $\mathcal{N}(\eta)$ small enough and $q=r^2$,
 by the continuity, we have \eqref{e:qiksmall} immediately.
\end{proof}
\subsection{Proof of some technical lemmas} \label{app:TecLem}
In this subsection, we need a key estimate as follows (see Lemma D.2 in \cite{FR08}): For any
$\gamma>1/4$ with $\gamma \neq 3/4$, we have
\begin{equation} \label{e:NonLinEst}
|A^{\gamma-1/2} B(u,u)| \leq C(\gamma) |A^\gamma u|^2 \ \ {\rm for \ any} \ u \in D(A^{\gamma}).
\end{equation}
By \eqref{e:NonLinEst}
and Young's inequality, we have
\begin{equation} \label{e:AuABu}
\begin{split}
|\Ll A^{\gamma} u, A^{\gamma} B(u,v) \Rr| & \leq |A^{\gamma+1/2} u| |A^{\gamma-1/2} B(u,v)| \\
& \leq |A^{\gamma+1/2} u|^2+C(\gamma)|A^{\gamma}u|^2|A^{\gamma} v|^2
\end{split}
\end{equation}
\begin{proof} [{Proof of Lemma \ref{l:HighDerivative}}]
We shall drop the index $m$ of quantities if no confusions arise. By It$\hat{o}$ formula, we have
\begin{equation}
\begin{split}
& d\left [|A^\gamma D_hX(t)|^2 \ek(t) \right]+2|A^{1/2+\gamma}D_hX(t)|^2 \ek(t) \\
&+2\Ll A^{\gamma}D_hX(t),A^{\gamma}\tilde B \left [D_hX(t),X(t) \right ] \Rr \ek(t) dt \\
&+K |A^\gamma D_{h}X(t)|^2 |AX(t)|^2 \ek(t) dt=0
\end{split}
\end{equation}
where $\tilde B \left (D_{h}X(t),X(t) \right)=B \left (D_{h}X(t),X(t) \right )
+B \left (X(t),D_{h}X(t) \right )$.
Thus, one has by \eqref{e:AuABu}
\begin{equation} \label{e:Energy}
\begin{split}
& \ \ |A^\gamma D_{h}X(t)|^2 \ek(t)+\int_0^t |A^{\gamma+1/2}D_{h}X(s)|^2 \ek(s) ds \\
& \leq |A^\gamma h|^2+\int_0^t C|A^\gamma D_{h}X(s)|^2
|A^\gamma X(s)|^2\ek(s)ds \\
& \ \ -K \int_0^t |A^\gamma D_{h}X(s)|^2  |AX(s)|^2 \ek(s) ds
\end{split}
\end{equation}
By Poincare inequality, we have $|Ax| \geq |A^{\gamma} x|$, and therefore as $K \geq C$,
\begin{equation*}
|A^\gamma D_{h}X(t)|^2 \ek(t)+\int_0^t |A^{\gamma+1/2}D_{h}X(s)|^2 \ek(s) ds \leq |A^\gamma h|^2.
\end{equation*}
As to \eqref{e:DhHXL} and \eqref{e:DhLXH}, we only prove \eqref{e:DhHXL}, similarly for the other.
By an estimate similar to
\eqref{e:Energy},  \eqref{e:DhX} and \eqref{e:IntEmk} (noticing $D_{h^\ph}X^\pl(0)=0$), we have
\begin{equation}
\begin{split}
& \ \ \  |A^\gamma D_{h^\ph}X^\pl(t)|^2 \ek(t)+\int_0^t
|A^{\gamma+1/2} D_{h^\ph} X^\pl (s)|^2 ds \leq  \\
& \leq \int_0^t [C|A^\gamma D_{h^\ph}X(s)|^2 |A^\gamma X(s)|^2
-K|A^\gamma D_{h^\ph} X^\pl(s)|^2 |AX(s)|^2] \ek(s)ds \\
& \leq C \int_0^t \left [|A^\gamma D_{h^\ph}X(s)|^2 \mathcal{E}_{\frac K2}(s)\right]
\left[|AX(s)|^2 \mathcal{E}_{\frac K2}(s)\right] ds \\
& {\leq} C |A^\gamma h|^2 \int_0^t |AX(s)|^2
\mathcal{E}_{\frac K2}(s) ds \leq \frac {2C}K |A^\gamma h|^2.
\end{split}
\end{equation}
 As to \eqref{e:ArDhX}, by the classical interpolation inequality
$$|A^r D_hX(s)|^2 \leq |A^\gamma D_hX(s)|^{2(1-2(r-\gamma))}|A^{1/2+\gamma} D_h X(s)|^{4(r-\gamma)},$$
H$\ddot{o}$lder's inequality and \eqref{e:DhX}, we have
\begin{equation*}
\begin{split}
\int_0^t |A^r D_hX(s)|^2 \ek(s) ds & \leq
[\int_0^t |A^{\gamma+1/2}D_hX(s)|^2 \mathcal{E}_{\frac K2}(s)ds]^{2(r-\gamma)}
[\int_0^t |A^\gamma D_hX(s)|^2\mathcal{E}_{\frac K2}(s)ds]^{1-2(r-\gamma)} \\
& {\leq}C t^{1-2(r-\gamma)} |A^\gamma h|^{2}.
\end{split}
\end{equation*}
\eqref{e:EkMat} immediately follows from applying It$\hat{o}$ formula to $|\ek(t) \int_0^t \Ll v,dW_s \Rr|^2$.
\end{proof}

\begin{proof} [{Proof of Lemma \ref{l:JacEst}}]
By \eqref{e:Jt} and the evolution equation governing $D_h X^\pl$, using the same method as proving \eqref{e:DhX}, we immediately
have \eqref{e:JacEst1} and \eqref{e:DhXpl}. Recall that $J_t$ and $J^{-1}_t$ are both the dynamics in
$\pi^{\pl} H$, thus
the operator $J_t$ is bounded invertible. Let $C$ be some constant only depends on $n$ (see \eqref{e:LowHigH}), whose values can vary from line to line. By the fact $|A|_{\mcl L(\pi^\pl H)} \leq C$
and \eqref{e:J-1t}, for any $h \in \pi^\pl H$, we have by differentiating $|J^{-1}_t h|^2 \ek(t)$
\begin{equation} \label{e:EnergyJ-1}
\begin{split}
 & \ \ \ |J^{-1}_t h|^2 \ek(t)+K\int_0^t |J^{-1}_sh|^2 |AX(s)|^2 \ek(s) ds \\
 &\leq |h|^2+2
\int_0^t |J^{-1}_s h| |J^{-1}_s Ah| \ek(s) ds \\
& \ \ +C
\int_0^t |J^{-1}_s h| |J^{-1}_s A^{-\frac12}|_{\mathcal{L}(\pi^{\pl}H)}
\cdot |A^{1/2}B^\pl(h,X(s))| \ek(s)ds \\
& \leq |h|^2+C\int_0^t |J^{-1}_s|_{\mathcal{L}(H)}^2 |h|^2
\ek(s) ds+C\int_0^t |J^{-1}_s|_{\mathcal{L}(H)}^2 |AX(s)|
|h|^2 \ek(s) ds,
\end{split}
\end{equation}
where the last inequality is by \eqref{e:NonLinEst}.
Hence,
$$ |J^{-1}_t|^2 \ek(t)+K\int_0^t |J^{-1}_s|_{\mathcal{L}(H)}^2 |AX(s)|^2 \ek(s) ds
\leq 1+C\int_0^t |J^{-1}_s|_{\mathcal{L}(H)}^2(1+|AX(s)|^2)
\ek(s) ds,$$
as $K$ is sufficiently large, we have
$|J^{-1}_t|_{\mathcal{L}(H)}^2 \ek(t) \leq 1+C\int_0^t
|J^{-1}_s|_{\mathcal{L}(H)}^2 \ek(s) ds,$
which immediately implies \eqref{e:J-1}.
\ \\

To prove \eqref{e:JacEst2}, by \eqref{e:J-1t}, we have
\begin{equation}
\begin{split}
\left |\ek(t) J^{-1}_t h-h \right| &\leq
\int_0^t |J^{-1}_s A h| \ek(s) ds+\int_0^t |J^{-1}_sA^{-1/2}|_{\mathcal{L}(\pi^\pl H)}
|A^{1/2} B^\pl(h,X(s))| \ek(s) ds  \\
& \leq C\int_0^t |J^{-1}_s|_{\mathcal{L}(H)}|h| \ek(s)ds+C\int_0^t
|J^{-1}_s|_{\mathcal{L}(H)}|h| |AX(s)| \ek(s) ds,
\end{split}
\end{equation}
thus, by \eqref{e:J-1} and \eqref{e:IntEmk},
\begin{equation*}  
\begin{split}
\left |\ek(t) J^{-1}_t-Id \right|_{\mathcal{L}(H)}
& \leq C\int_0^t \ek(s) |J^{-1}_s|_{\mathcal{L}(H)}ds+
\int_0^t  \ek(s) |J^{-1}_s|_{\mathcal{L}(H)} |AX(s)|ds \\
& \leq Ct^{\frac 12} \left[\int_0^t \mathcal{E}_{K}(s) |J^{-1}_s|^2_{\mathcal{L}(H)}ds\right]^{\frac 12}
+t^{\frac 12}  \left[\int_0^t \ek(s) |J^{-1}_s|^2_{\mathcal{L}(H)} \ \ek(s) |AX(s)|^2 ds\right]^{\frac 12}\\
& \leq t^{1/2} Ce^{Ct}
\end{split}
\end{equation*}
where the last inequality is due to \eqref{e:J-1}.
As for \eqref{e:JmQlEmk}, by Parseval's identity and \eqref{e:J-1},
\begin{equation}
\begin{split}
& \ \ \E \left[\int_0^t \ek^2(s) |(J^{-1}_sQ^\pl)^{*} h|^2 ds \right]
=\sum_{k \in Z_\pl (n)}\sum_{i=1}^2 \E \left[\int_0^t \mathcal{E}_{2K}(s) |\Ll J^{-1}_s(q^i_k e_k),h \Rr|^2 ds
\right] \\
&\leq \sum_{k \in Z_\pl(n)}\sum_{i=1}^2 \E \left[\int_0^t \mathcal{E}_{2K}(s) |J^{-1}_s(q^i_k e_k)|^2 ds
\right] |h|^2
\leq tCe^{Ct} \sum_{k \in Z_\pl
(n)\setminus Z_\pl(n_0)}\sum_{i=1}^2|q^i_k e_k|^2 |h|^2.
\end{split}
\end{equation}
By an estimate similar to \eqref{e:Energy}, we have
\begin{equation*} 
\begin{split}
 & \ \ \ |A\D_v X^\pl (t)|^2 \ek(t)+\int_0^t |A^{3/2} \D_v X^\pl (t)|^2 \ek(s) ds \\
 &\leq \frac 12 \int_0^t |A \D_v X^\pl (s)|^2 \ek(s)ds +\frac 12 \int_0^t |AQ^\pl v^\pl(s)|^2
 \ek(s)ds \\
& \leq
 \frac 12 \int_0^t |A \D_v X^\pl (s)|^2 \ek(s)ds +C \int_0^t |v^\pl(s)|^2
 \ek(s)ds
\end{split}
\end{equation*}
which implies \eqref{e:Mal1OrdEst} by Gronwall's inequality.

As to \eqref{e:Mix2OrdEst}, write down the differential equation for $\D_v D_h X^\pl (t)$, and apply It$\hat{o}$
formula, we have
\begin{equation*} \label{e:EnergyDvDhX}
\begin{split}
 & \ \ \ \ |\D_v D_h X^\pl (t)|^2 \ek(t)+2\int_0^t |A^{1/2} \D_v D_h X^\pl (t)|^2 \ek(s) ds \\
 &\leq \int_0^t |\D_v D_h X^\pl (s)| \left(|\tilde B^\pl (\D_v D_h X^\pl (s),X(s))|+|\tilde B^\pl (D_h X^\pl (t),\D_v X(s))| \right) \ek(s)ds \\
 &\ \ -K \int_0^t |\D_v D_h X^\pl (s)|^2 \ek(s) ds.
\end{split}
\end{equation*}
By \eqref{e:NonLinEst} and Young's inequality,
\begin{equation*} \label{e:EnergyDvDhX}
\begin{split}
 & \ \ \ \ |\D_v D_h X^\pl (t)|^2 \ek(t)+\int_0^t |A^{1/2} \D_v D_h X^\pl (t)|^2 \ek(s) ds \\
 &\leq
\int_0^t |\D_vD_hX^\pl (s)|^2 \left(|A^{\frac 12}X(s)|^2+1\right) \ek(s) ds
+\int_0^t |A^{\frac 12} \D_v X^\pl (s)|^2 |A^{\frac 12} D_h X^\pl (s)|^2 \ek(s)ds \\
& \ \ \ \
-K\int_0^t |\D_vD_h X^\pl (s)|^2|AX (s)|^2 \ek(s) ds,
\end{split}
\end{equation*}
as $K$ is sufficiently large, by Poincare inequality, $|A|_{\mcl L(\pi^\pl H)} \leq C$,
\eqref{e:Mal1OrdEst} and \eqref{e:DhX},
we have
\begin{equation*}
\begin{split}
& \ \ |\D_v D_h X^\pl (t)|^2 \ek(t)+\int_0^t |A^{1/2} \D_v D_h X^\pl (t)|^2 \ek(s) ds \\
 &\leq
\int_0^t |\D_vD_hX^\pl (s)|^2 \ek(s) ds
+C \int_0^t |\D_v X^\pl (s)|^2 \mcl E_{K/2}(s) |D_h X^\pl (s)|^2 \mcl E_{K/2}(s)ds \\
& \leq \int_0^t |\D_vD_hX^\pl (s)|^2 \ek(s) ds+C|h|^2\int_0^t e^{t-s} |v^\pl(s)|^2  \mcl E_{K/2}(s)ds.
\end{split}
\end{equation*}
By Gronwall's inequality, we obtain \eqref{e:Mix2OrdEst} immediately.
Similarly, \eqref{e:Mal2OrdEst} can be obtained by
\begin{equation*}
\begin{split}
& \ \ \ |\D^2_{v_1v_2} X^\pl (t)|^2 \ek(t) \\
& \leq C \int_0^t [|A \D_{v_1} X^\pl (s)|^2 \mathcal{E}_{K/2} (s)] \ [|AD_{v_2} X^\pl (s)|^2
\mathcal{E}_{K/2} (s)]ds \\
& \leq t Ce^{Ct}\left[\int_0^t |v_1^\pl(s)|^2
\mathcal{E}_{K/2}(s)ds\right] \left[\int_0^t |v_2^\pl(s)|^2 \mathcal{E}_{K/2}(s)ds\right].
\end{split}
\end{equation*}
where the last inequality is due to \eqref{e:Mal1OrdEst}.
\end{proof}


\bibliographystyle{amsplain}

\end{document}